\newcommand{\mi}{\setminus}
\newcommand{\lra}{\longrightarrow}
\newcommand{\ov}{\overline}
\newcommand{\jdotfont}{}
\font\jdotfont lcircle10  scaled 913 % 833=\magstep-2 913=\magstep-1
\newcommand{\osbullet}{\jdotfont\char113} % 'blob' (2pt)
\newlength{\sbwd} \settowidth{\sbwd}{\osbullet}
\newcommand{\csbullet}{\kern.5\sbwd\osbullet\kern-.5\sbwd}
\newcommand{\id}{\operatorname{id}}
\newcommand{\bsi}{\boldsymbol\sigma}
\newcommand{\bb}{\mathbf b}
\newcommand{\bu}{\mathbf u}
\newcommand{\bone}{{\boldsymbol 1}}
\newcommand{\sC}{\mathsf{C}}
\newcommand{\sD}{\mathsf{D}}
\newcommand{\sE}{\mathsf{E}}
\newcommand{\sEz}{\mathsf{E}_0}
\newcommand{\sF}{\mathsf{F}}
\newcommand{\sT}{\mathsf{T}}
\newcommand{\sTz}{{\mathsf{T}_0}}
\newcommand{\sR}{\mathsf{R}}
\newcommand{\sRz}{{\mathsf{R}_0}}
\newcommand{\sB}{\mathsf{B}}
\newcommand{\sL}{\mathsf{L}}
\newcommand{\sM}{\mathsf{M}}
\newcommand{\sN}{\mathsf{N}}
\newcommand{\sI}{\mathsf{I}}
\newcommand{\sJ}{\mathsf{J}}
\newcommand{\sQ}{\mathsf{Q}}
\newcommand{\sS}{\mathsf{S}}
\newcommand{\sA}{\mathsf{A}}
\newcommand{\sV}{\mathsf{V}}
\newcommand{\sZ}{\mathsf{Z}}
\def\sTR{\sT/\sR}
\newcommand{\qq}{\mathbb{Q}}
\newcommand{\zz}{\mathbb{Z}}
\newcommand{\rr}{\mathbb{R}}
\newcommand{\Ga}{\Gamma}
\newcommand{\ga}{\gamma}
\newcommand{\gade}{{\gamma+\delta}}
\newcommand{\al}{\alpha}
\newcommand{\be}{\beta}
\newcommand{\de}{\delta}
\newcommand{\si}{\sigma}
\newcommand{\ep}{\varepsilon}
\newcommand{\wi}{\widetilde}
\newcommand{\wh}{\widehat}
\def\hsp{\mspace{1mu}}
\def\ccdot{\!\cdot \!}
\def\col{\! :\!}
\def\lan{\langle}
\def\ran{\rangle}
\def\op{^\text{op}}
\def\xg{x_\ga}
\def\xd{x_\de}
\def\xs{x_{\ga+\de}}
\def\ThetaE{\Theta_\sE}
\def\GaE{\Ga_\sE}
\def\quat#1#2#3{\big(\frac{#1,#2}{#3}\big)}
\newcommand\SK[1][1]{\operatorname{SK}_{#1}}
\newcommand{\Nrd}[1][{}]{{\operatorname{Nrd}_{#1}}}
\newcommand{\res}{\operatorname{res}}
\newcommand{\Aut}{\operatorname{Aut}}
\newcommand{\Hom}{\operatorname{Hom}}
\newcommand{\cors}{\operatorname{cor}}
\newcommand{\Dec}{\operatorname{Dec}}
\newcommand{\Ind}{\operatorname{Ind}} 
\newcommand{\intt}{\operatorname{int}}
\newcommand{\chr}{\operatorname{char}}
\newcommand{\ind}{\operatorname{ind}}
\newcommand{\End}{\operatorname{End}}
\newcommand{\sEnd}{\mathsf{End}}
\newcommand{\Gal}{\operatorname{Gal}}
\newcommand{\sGal}{\mathsf{Gal}}
\newcommand{\Br}{\operatorname{Br}}
\newcommand{\sBr}{\mathsf{Br}}
\newcommand{\modd}{\operatorname{mod}}
\newcommand{\gr}{\operatorname{{\sf gr}}}
\newcommand{\im}{\operatorname{im}}
\newcommand{\inv}{^{-1}}
\def\cupp{{\raise 0.9pt\hbox{$\,\scriptstyle\cup\,$}}}
\newcommand{\uij}{u_{ij}}
\theoremstyle{plain}
\newtheorem {lemma}{Lemma}[section]
\newtheorem {theorem}[lemma]{Theorem}
\newtheorem {corollary}[lemma]{Corollary}
\newtheorem {proposition}[lemma]{Proposition}
\newtheorem {prop}[lemma]{Proposition}
\theoremstyle{remark}
\newtheorem{remark}[lemma]{Remark}
\newtheorem {example}[lemma]{Example}
\theoremstyle{definition}
\numberwithin{equation}{section}
\title[Unitary  $\SK$ of semiramified division algebras]
{Unitary  $\boldsymbol{\SK}$ of semiramified graded and 
valued division algebras}
\author{A. R. Wadsworth}
\address{
Department of Mathematics\\
University of California at San Diego\\
La Jolla, California 92093-0112\\
U.S.A.}
 \email{arwadsworth@ucsd.edu}
\begin{document} 
\maketitle
%\tableofcontents

\section{Introduction}

Let $D$ be a division algebra finite-dimensional over its center $K$.  Then,
$$
\SK(D) \ = \ \{ d\in D^* \mid\Nrd_D(d) = 1\} \, \big / \, [D^*,D^*], 
$$
where $\Nrd_D$ denotes the reduced norm and $[D^*,D^*]$ is the 
 commutator group of the group of units $D^*$ of~$D$.
If $D$ has a unitary involution $\tau$ (i.e., an involution $\tau$ on 
$D$ with $\tau|_K \ne \id$), then the unitary $\SK$ for $\tau$~on~$D$~
is
\begin{equation}\label{unitarydef} 
\SK(D, \tau) \ = \ \Sigma'_\tau(D) \, \big/ \, \Sigma_\tau(D), 
\end{equation}
where 
$$
\Sigma _\tau'(D) \ = \ \{d\in D^*\mid \Nrd_D(d) = \tau(\Nrd_D(d))\}
\qquad \text{and} \qquad \Sigma_\tau(D) \ = \ \big\lan \{ d\in D^*\mid
d = \tau(d)\}\big\ran.
$$
The groups $\SK(D)$ and $\SK(D,\tau)$ are of considerable interest 
as subtle invariants of $D$, and as reduced Whitehead groups for certain 
algebraic groups (cf.~\cite{tits}, \cite{platsurvey}, \cite{gille}).

In this paper we will prove formulas for $\SK(\sE)$ and $\SK(\sE, \tau)$ 
for $\sE$ a semiramified graded division algebra $\sE$ of finite rank over
its center.  In view of the isomorphisms in \cite[Th.~4.8]{hazwadsworth} and 
\cite[Th.~3.5]{I}, the formulas for $\sE$ imply analogous formulas for 
$\SK$ and unitary $\SK$ for a tame semiramified division algebra $D$ over a 
Henselian valued field $K$.  The formulas thus obtained in the Henselian case
generalize ones given by Platonov for $\SK(D)$ and Yanchevski\u\i\  
for  $\SK(D,\tau)$ for bicyclic decomposably 
semiramified division algebras over iterated Laurent fields. 
  Most of our work will be in the unitary setting, 
which is not as well developed as the nonunitary setting.  

Ever since Platonov  gave   examples of division algebras 
with  $\SK(D)$ nontrivial there has been ongoing interest in 
$\SK$.  Platonov  showed in \cite{platrat} that 
nontriviality of $\SK(D)$ implies that the algebraic group
group $\text{SL}_1(D)$ (with $K$-points $\{d\in D\mid \Nrd_D(d) = 1\}$) 
is not a rational variety.  Also, Voskresenski\u\i\  showed in 
\cite{V1} and \cite[Th., p.~186]{V2} that $\SK(D)\cong \text{SL}_1(D)/R$,  
the group of \mbox{$R$-equivalence} classes of the variety $\text{SL}_1(D)$.  
The corresponding unitary result, $\SK(D, \tau) \cong 
\text{SU}_1(D, \tau)/R$ was given in \cite[Remark, p.~537]{yy} and 
\cite[Th.~5.4]{cm}. More recently, Suslin in \cite{suslin1} and \cite{suslin2}
has related $\SK(D)$ to certain $4$-th cohomology groups associated to 
$D$, and has conjectured that whenever the Schur index~$\ind(D)$ 
is not square-free then $\SK(D\otimes_KL)$ is nontrivial for some 
field $L\supseteq K$. (This has been proved by Merkurjev 
in \cite{merkconj1} and \cite{merkconj2}  if $4|\ind(D)$, but remains 
open otherwise.) Nonetheless, explicit computable formulas for $\SK(D)$
and $\SK(D,\tau)$ have remained elusive, and are principally available, 
when $\ind(D) >4$, only for algebras over Henselian fields  
(cf.~\cite{ershov} and \cite[Th.~3.4]{hazwadsworth}) and quotients of 
iterated twisted polynomial algebras (cf.~\cite[Th.~5.7]{hazwadsworth}).   

Platonov's original examples with nontrivial $\SK$ 
in  \cite{platTannaka}  
and \cite{platonov} were division 
algebras $D$ over a twice iterated
Laurent power series field $K = k(((x))((y))$, where $k$ is a 
local or global field
or an infinite algebraic extension of such a  field.  His $K$ has a 
naturally associated rank $2$ Henselian valuation which extends uniquely 
to a valuation on $D$.  With respect to this valuation, his
$D$ is tame and \lq\lq decomposably semiramified"  and, in addition,  its residue division algebra
$\ov D$ is a field with $\ov D = L_1\otimes _kL_2$, where each $L_i$ is 
cyclic Galois over $k$.  His basic formula for such $D$ is:
\begin{equation}\label{platformula}
\SK(D) \, \cong\, \Br(\ov D/k) \big/ \big[\Br(L_1/k)\cdot \Br(L_2/k)\big],
\end{equation}
where $k$ is any field, 
$\Br(k)$ is the Brauer group of $k$, and for a field $M\supseteq k$, 
$\Br(M/k)$ denotes the relative Brauer
 group 
$\ker(\Br(k) \to \Br(M))$, a subgroup of $\Br(k)$.   
That $D$ is  tame and semiramified means that\break 
${[\ov D \col \ov K] = | \Ga_D \col \Ga_K| = \sqrt{[D\col K]}}$ and 
$\ov D$ is a field separable (hence abelian Galois) over $\ov K$,
where $\Ga_D$ is the value group the valuation on $D$.   We say that $D$ is 
{\it decomposably semiramified} (abbreviated~DSR) if $D$~is a tensor product
of cyclic tame and semiramified division algebras.   
Using \eqref{platformula}  with $k$ a global field or an algebraic 
extension of a
global field, Platonov   showed in \cite{plat76} that every finite
abelian group   and some infinite  abelian groups 
of bounded torsion appear as $\SK(D)$ for suitable~$D$.

Shortly after Platonov's work, Yanchevski\u\i\ obtained in \cite{yannounc},
\cite{y}, \cite{yinverse} 
similar results for 
the unitary $\SK$ for similar types of division algebras, namely 
$D$ decomposably semiramified over $K = k((x))((y))$, with $k$ any  
field, given that  $D$ has a unitary involution $\tau$ with fixed field $K^\tau
= \ell((x))((y))$ for some field $\ell \subseteq k$ with 
$[k\col \ell] = 2$.  Yanchevski\u\i's key formula (when $\ov D = 
L_1\otimes_k L_2$ as above) is:
\begin{equation}\label{yanchformula}  
\SK(D,\tau) \cong \Br(\ov D/k;\ell) \big/ \big[\Br(L_1/k;\ell)\cdot 
\Br(L_2/k;\ell)\big],
\end{equation}
where for a field $M\supseteq k$, $\Br(M/k;\ell) = 
\ker\big(\text{cor}_{k\to \ell}\colon
\Br(M/k) \to \Br(\ell)\big)$; this is the subgroup of $\Br(k)$ consisting
of the classes of central simple $k$-algebras split by $M$ and having a unitary involution 
$\tau$ with fixed field ${k^\tau = \ell}$.        
He used this in \cite{yinverse}  with $k$~and~$\ell$ global fields 
to show that any finite abelian group is realizable as $\SK(D,\tau)$.
He obtained remarkably similar analogues for the unitary $\SK$ to 
other results of Platonov for the nonunitary $\SK$, but generally with 
substantially more difficult and intricate proofs.

Ershov showed in \cite{e} and \cite{ershov} that the natural setting 
for viewing Platonov's examples of nontrivial $\SK(D)$ is that of 
tame division algebras $D$ over a Henselian valued field $K$.
(Platonov considered his $K$ 
in a somewhat cumbersome way as a field with complete discrete
valuation with residue field which also has a complete discrete valuation.)
The Henselian valuation on $K$ has a unique extension to a valuation
on~$D$, and Ershov gave exact sequences that describe $\SK(D)$ in terms 
of various data related to the residue division ring $\ov D$. In particular
he showed (combining \cite[p.~69, (6) and Cor.~(b)]{ershov}) that if 
$D$ is DSR (with $K$ Henselian), then 
\begin{equation}\label{DSRSK1} 
\SK(D)  \, \cong\,  \wh H\inv(\Gal(\ov D/\ov K), \ov D^*).   
\end{equation}

More recently, there has been work on associated graded rings of 
valued division algebras, see especially \cite{hwcor}, \cite{mounirh},
\cite{tignolwadsworth}. The tenor of this work has been that for 
a tame division algebra $D$ over a Henselian valued field, most
of the structure of $D$ is inherited by its associated graded ring 
$\gr(D)$, while $\gr(D)$ is often much easier to work with 
than $D$ itself.  This theme was applied quite recently by 
R.~Hazrat and the author in \cite{hazwadsworth} and \cite{I} to 
calculations of $\SK$ and unitary $\SK$. It was shown in 
\cite[Th.~4.8]{hazwadsworth} that if $D$ is tame over $K$ with respect to
a Henselian valuation, then $\SK(D) \cong\SK(\gr(D))$; the corresponding 
result for unitary $\SK$ was proved in \cite[Th.~3.5]{I}.  Calculations of 
$\SK$ in the graded setting are significantly easier and more transparent 
than in the original ungraded setting, allowing almost effortless recovery
of Ershov's exact sequences, with some worthwhile improvements.  Notably, 
it was shown in \cite[Cor.~3.6(iii)]{hazwadsworth} that if
$K$ is Henselian and 
$D$ is tame and semiramified (but not necessarily DSR), then there is an 
exact sequence 
\begin{equation}\label{hwsemiram}
H\wedge H \, \lra\,  \wh H\inv(H, \ov D^*) \, \lra\,  \SK(D) 
\, \lra\,  1, \qquad
\text{where}\qquad H \,=\, \Gal(\ov D/\ov K) \, \cong\, 
\Ga_D/\Ga_K.
\end{equation}
    When $D$ is DSR, 
the  image of $H\wedge H$ in $\wh H\inv(H, \ov D^*)$ is trivial, yielding
\eqref{DSRSK1}.  Then, Platonov's formula
 \eqref{platformula} is obtained from \eqref{DSRSK1} via the following isomorphism:  For a field 
$M = L_1\otimes_k L_2$ where each $L_i$ is cyclic Galois over $k$,
\begin{equation}\label{bicycliccohom}
\wh H\inv(\Gal(M/k), M^*) \ \cong \ \Br(M/k)/\big[\Br(L_1/k)
\cdot \Br(L_2/k) \big].
\end{equation}
See \eqref{bicyclicbrel}--\eqref{njnj} below for a short proof of 
\eqref{bicycliccohom} using facts about abelian crossed products.

When $D$ is semiramified but not DSR, the contribution of the first 
term in \eqref{hwsemiram} can be better understood in terms of the 
$I\otimes N$ decomposition of $D$:  Our semiramified $D$ is equivalent 
in $\Br(K)$ to $I\otimes_K N$, where $I$~is inertial (= unramified) over
$K$ and $N$ is DSR, so $\ov N \cong \ov D$ and $\Ga_N = \Ga_D$.  Thus,
the $\wh H\inv$ term in~\eqref{hwsemiram} coincides with 
$\SK(N)$. We will show  in Cor.~\ref{henselcor}(i) below that the image of 
$H\wedge H$ in $\wh H\inv(H, \ov D^*)$
is expressible in terms of parameters describing the residue algebra
$\ov I$ of $I$, which is central simple over $\ov K$ and 
split by the field~$\ov D$. This $\ov I$ does not show up within 
$D$ or $\ov D$, but nonetheless has significant influence on the 
structure of $D$. (For example, it determines whether $D$ can be a crossed product
or nontrivially decomposable---see \cite[pp.~162--166, Remarks~5.16]{jw}.
In \cite{jw} DSR algebras were called \lq\lq nicely semiramified," and abbreviated
NSR.  We prefer the more descriptive term decomposably semiramified.) 
  Also, 
$\ov I$~is not uniquely determined by~$D$, but determined only modulo the 
group $\Dec(\ov D/\ov K)$ of simple $\ov K$-algebras which 
\lq\lq decompose according to~$\ov D$\,"---see \S3 below for the definition
of $\Dec(\ov D/\ov K)$.  In the bicyclic  case where $D$ is semiramified and 
$K$ Henselian and $\ov D \cong L_1\otimes _{\ov K} L_2$ with each 
$L_i$ cyclic Galois over~$\ov K$, we will show in Cor.~\ref{henselcor}(ii)
that 
\begin{equation}\label{ubicyclicsemiram}
\SK(D) \ \cong  \ \Br(\ov D/\ov K) \big/ \big[\Br(L_1/\ov K)
\cdot \Br(L_2/\ov K)\cdot\lan[\ov I]\ran\big],
\end{equation}
which is a natural generalization of Platonov's formula \eqref{platformula}.

The principal aim of this paper is to prove unitary versions of the
results described above for nonunitary~$\SK$, especially \eqref{DSRSK1}, 
\eqref{bicycliccohom}, and \eqref{ubicyclicsemiram}.  The
unitary versions of these are, respectively, Th.~\ref{unitaryDSR}(i),
Prop.~\ref{unitarybicyclic}, and Th.~\ref{main}(ii). Along the way, it 
will be necessary to develop a unitary version of the $I\otimes N$
decomposition for semiramified division algebras.  This is given in Prop.~
\ref{uINdecomp}.  In the final section we will apply some of these formulas 
to give an example where the natural map $\SK(D,\tau) \to \SK(D)$ is 
not injective.  

This paper is a sequel to \cite{I}, which describes the equivalence of the
graded setting and the Henselian valued setting for computing
unitary $\SK$, and has calculations of $\SK(D,\tau)$ for several cases other 
than the semiramified one considered here.  However, the present paper can be read 
independently of \cite{I}.
We will work here primarily with 
graded division algebras, where the calculations are more transparent
than for valued algebras.
Some basic background on the graded objects is given in \S\ref{graded}.
But we reiterate  that by \cite[Th.~3.5]{I} every result in the graded 
setting yields a corresponding result for tame division algebras over 
Henselian valued fields. While what is proved here is for a rather
specialized type of algebra, we note that detailed knowledge of $\SK$ in special cases 
sometimes has wider consequences.  See, e.g., the paper~\cite{rty} 
where Suslin's conjecture is reduced to the case of cyclic algebras.   
See also \cite[Th.~4.11]{w}, where 
the proof of nontriviality of a cohomological invariant of Kahn 
uses a careful analysis of $\SK(D)$ for the $D$ in Platonov's 
original example.   

From the perspective of algebraic groups, it is perhaps unsurprising that 
there should be  results for the unitary $\SK$ similar to those in the  
nonunitary case.   For, $\text{SL}_1(D)$ is a group of inner 
type $A_{n-1}$ where $n = \deg(D)$, and $\text{SU}_1(D, \tau)$ is a group 
of outer type $A_{n-1}$ (cf.~\cite[Th.~(26.9)]{kmrt}).  
Nonetheless, the similarities in 
formulas for $\SK(D,\tau)$   given in Yanchevski\u\i's work 
and in \cite{I} and here to those for $\SK(D)$ seem quite striking.   
Likewise, the results by 
Rost on $\SK(D)$ for biquaternion algebras (see \cite[\S17A]{kmrt}) and 
by Merkurjev in \cite{merkurjev} 
for arbitrary algebras of degree $4$, have a unitary analogue 
proved by Merkurjev in \cite{merk2}.
This suggests that a further 
analysis of the unitary $\SK$  would be worthwhile, notably to 
investigate whether there are unitary versions of the deep results   
by Suslin \cite{suslin2} and
Kahn \cite{kahn} relating $\SK(D)$ to higher \'etale cohomology groups.

%%  \vfill\eject

\section{Graded division algebras and simple algebras}\label{graded}

We will be working throughout with graded algebras 
graded by a torsion-free abelian group.  We now set 
up the terminology for such algebras and recall some 
of the basic facts we will use frequently.  

Let $\Ga$ be a torsion-free abelian group, and let 
$\sR$ be a ring graded by $\Ga$, i.e., $\sR = \bigoplus
_{\ga\in \Ga}\sR_\ga$, where each $\sR_\ga$ is an 
additive subgroup of $\sR$ and $\sR_\ga \cdot \sR_\de
\subseteq \sR_{\ga +\de}$ for all $\ga, \de\in \Ga$.  
The homogeneous elements of $\sR$ are those lying in
$\bigcup_{\ga\in \Ga}\sR_\ga$. If $r\in \sR_\ga$, $r\ne 0$, 
then we write $\deg(r) = \ga$.  The grade set of 
$\sR$ is $\Ga_\sR = \{ \ga \in \Ga\mid 
\sR_\ga \ne \{0\}\}$.  (We work only with gradings by 
torsion-free abelian groups because we are interested 
in the associated  graded rings determined by 
valuations on division algebras; for such rings the 
grading is indexed by the value group of the valuation, which is 
torsion-free abelian.)  If $\sR' = \bigoplus_{\ga\in \Ga}
\sR'_\ga$ is another graded ring, a {\it graded ring    
homomorphism} $\varphi\colon \sR\to \sR'$ is a 
ring homomorphism such that $\varphi(\sR_\ga) \subseteq 
\sR'_\ga$ for all $\ga\in \Ga$.  If $\varphi$~is an 
isomorphism, we say that $\sR$ and $\sR'$ are graded 
ring isomorphic, and write $\sR \cong_g \sR'$.
For example, if $a\in \sR$ is homogeneous and $a\in \sR^*$, 
the group of units of $\sR$, then the map $\intt(a)\colon
\sR\to\sR$ given by $r\mapsto ara\inv$ is a graded ring 
automorphism of $\sR$. 

A graded ring $\sE = \bigoplus_{\ga \in \Ga} \sE_\ga$ 
is said to be a {\it graded division ring} if every 
nonzero homogeneous element of $\sE$ lies in the 
multiplicative group $\sE^*$ of units of $\sE$.  See
\cite{hwcor} for background on graded division ring and 
proofs of the properties mentioned here.  
Notably (as $\Ga$ is torsion-free abelian), $\sE$ has no 
zero divisors, $\sE^*$~consists entirely of homogeneous 
elements, $\Ga _\sE$ is a subgroup of $\Ga$,  \,
$\sEz$ is a division ring, and each nonzero 
homogeneous component
$\sE_\ga$ of $\sE$ is a $1$-dimensional left and right 
$\sEz$-vector space.  Furthermore, if $\sM$ is any left graded 
$\sE$- module (i.e., an $\sE$-module such that 
$\sM = \bigoplus_{\ga \in \Ga}\sM_\ga$ with 
$\sE_\ga \!\cdot\! \sM_\de \subseteq \sM_{\ga +\de}$
for all~$\ga, \de \in \Ga$), then $\sM$ is a free 
$\sE$-module with a homogeneous base, and any two such bases
have the same cardinality; this cardinality is called the 
dimension of $\sM$ and denoted $\dim_\sE(\sM)$.
Any such $\sM$ is therefore called a left graded  $\sE$-vector 
space. 

A commutative graded division ring $\sT = \bigoplus
_{\ga \in \Ga}\sT_\ga$ is called a {\it graded field}.  
Such a $\sT$  is an integral domain;
let $q(\sT)$ denote the quotient field of $\sT$.
%% Then $\sT$ is integrally closed in $q(\sT)$, by 
%% \cite[???]{hwalg}.  
A graded ring $\sA$ which is a 
$\sT$-algebra is called a {\it graded $\sT$-algebra} if the module 
action of $\sT$ on $\sA$ makes $\sA$ into a graded 
$\sT$-module.  When this occurs, $\sT$ is graded isomorphic 
to a graded subring of the center of $\sA$, which is denoted 
$Z(\sA)$. { \it All graded $\sT$-algebras considered in this
paper are assumed to be finite-dimensional graded 
$\sT$-vector spaces.}  Note that if $\sA$ is  a graded 
\mbox{$\sT$-algebra},
then $\sA \otimes _\sT q(\sT)$ is a $q(\sT)$-algebra of the same dimension.
That is, $[\sA\col\sT] = [\sA \otimes _\sT q(\sT)\col q(\sT)]$, 
where  $[\sA\col\sT]$~denotes $\dim_\sT(\sA)$ and 
$[\sA \otimes _\sT q(\sT)\col q(\sT)] = 
\dim_{q(\sT)}(\sA \otimes _\sT q(\sT))$.

Note that if $\sA$ and $\sB$ are graded algebras over a graded
field $\sT$ then $\sA \otimes _\sT \sB$ is also a graded
$\sT$-algebra with $(\sA \otimes _\sT \sB)_\ga
= \sum _{\de\in \Ga} \sA_\de \otimes_\sTz \sB_{\ga - \de}$ 
for all $\ga \in \Ga$.  Clearly, $\Ga_{\sA \otimes _\sT \sB}
= \Ga_\sA + \Ga_\sB$. Also, if  $C$ is a finite-dimensional 
 $\sTz$-algebra, then $C\otimes _\sTz \sA$ is a graded 
$\sT$-algebra with $(C\otimes _\sTz \sA)_\ga = C\otimes _\sTz 
\sA_\ga$ for all $\ga \in \Ga$, and $\Ga_{C\otimes _\sTz \sA}
= \Ga_\sA$.

A graded $\sT$-algebra $\sA$ is said to 
be {\it simple} if it has no homogeneous two-sided ideals
except $\sA$ and~$\{0\}$.  $\sA$ is called a 
{\it central simple} $\sT$-algebra if in addition its center
$Z(\sA)$ is $\sT$.  
The theory of  simple graded algebras 
is analogous to the usual theory of finite-dimensional simple 
algebras.  This is described in \cite[\S 1]{hwcor}, where proofs of 
the following facts can be found.  There is a graded
Wedderburn Theorem for simple graded algebras:  Any such
$\sA$ is graded isomorphic to $\sEnd_\sE(\sM)$ for some 
finite-dimensional graded vector space $\sM$ over a graded 
division algebra $\sE$, and $\sE$ is unique up to graded 
isomorphism.  Also, while $\sA_0$ need not be simple, it is 
always semisimple, and ${\sA_0\cong \prod_{j= 1}^s M_{\ell_j}
(\sEz)}$ for some $\ell_j\times \ell_j$ matrix rings
over $\sEz$
(see the proof of Lemma~\ref{zerosimple} below).
We write $[\sA]$ for the equivalence class of~$\sA$ under 
the equivalence relation $\sim_g$ given by:
$\sA \sim_g \sA'$ iff 
$\sA\cong_g \sEnd_\sE(\sM)$ and ${\sA' \cong_g
\sEnd_\sE(\sM')}$ for the same graded division algebra~
$\sE$.  The Brauer group (of graded algebras) 
for $\sT$ is 
$$
{\sBr(\sT) = \{[\sA]\mid \sA \text{ is
a graded central simple $\sT$-algebra}\}},
$$
%%%
\vfill\eject
\noindent with the 
well-defined group operation $[\sA]\cdot [\sA'] = 
[\sA \otimes_\sT \sA']$.  When $\sA \cong_g \sEnd_\sE(\sM)$ as
above, then $[\sA] = [\sE]$, and up to graded isomorphism
$\sE$ is the only graded division algebra
with $\sA \sim_g \sE$. 
There is a graded version 
of the Double Centralizer Theorem, see \cite[Prop.~1.5]{hwcor}
and also the Skolem-Noether Theorem, see \cite[Prop.~1.6]{hwcor}.
We recall the latter, since it has an added condition not appearing
in the ungraded version.

\begin{proposition}[{\cite[Prop.~1.6(b),(c)]{hwcor}}]\label{grSN}
Let $\sA$ be a central simple graded algebra over the graded field~$\sT$,
and let $\sB$ and $\sB'$ be simple graded $\sT$-subalgebras of $\sA$. Let
$\sC = C_\sA(\sB)$, the centralizer of $\sB$ in $\sA$,  
   and let $\sZ = Z(\sC)  = Z(\sB)$ and 
$\sC' = C_\sA(\sB')$.  Let $\alpha\colon
\sB \to \sB'$ be a graded $\sT$-algebra isomorphism. Then there is 
a homogeneous $a\in A^*$ such that $\alpha(b) = aba\inv$ for all $b\in \sB$
if and only if there is a graded $\sT$-algebra isomorphism 
$\gamma\colon \sC \to \sC'$ such that  $\gamma|_Z = \alpha|_\sZ$.
Such a $\gamma$ exists whenever $\sC_0$ is a division ring.
\end{proposition}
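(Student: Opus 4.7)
My plan is to prove the two implications of the biconditional separately and then verify the division-ring hypothesis for the existence of $\gamma$. The forward direction is essentially automatic: if a homogeneous $a \in \sA^*$ satisfies $\alpha(b) = aba^{-1}$ for all $b\in \sB$, then $\intt(a)$ is a graded $\sT$-algebra automorphism of $\sA$ (conjugation by a homogeneous unit preserves degrees), carrying $\sB$ onto $\sB'$ and hence $\sC = C_\sA(\sB)$ onto $C_\sA(\sB') = \sC'$. Setting $\gamma := \intt(a)|_\sC$ produces the desired graded isomorphism, with $\gamma|_\sZ = \alpha|_\sZ$ since $\sZ \subseteq \sB$.

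For the converse, given a compatible $\gamma$, the strategy is to produce a homogeneous conjugator in two steps. First I pass to $\sA_q := \sA \otimes_\sT q(\sT)$, which is central simple over $q(\sT)$ (as $\sA$ is central simple graded over $\sT$); similarly $\sZ_q := \sZ \otimes_\sT q(\sT)$ is a field and $\sC_q$ is central simple over $\sZ_q$. Ungraded Skolem--Noether applied to the extended $\alpha_q: \sB_q \to \sB'_q$ yields some $u_0 \in \sA_q^*$ with $\intt(u_0)|_{\sB_q} = \alpha_q$. Its restriction $\intt(u_0)|_{\sC_q}: \sC_q \to \sC'_q$ agrees with $\alpha|_\sZ$ on $\sZ_q$, as does $\gamma_q$. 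Hence $\gamma_q^{-1}\circ\intt(u_0)|_{\sC_q}$ is a $\sZ_q$-algebra automorphism of $\sC_q$, which by Skolem--Noether within $\sC_q$ is inner, say $\intt(c)$ for $c\in\sC_q^*$. Replacing $u_0$ by $u := u_0 c^{-1}$ (which still conjugates $\sB_q$ via $\alpha_q$, since $c\in\sC_q$ commutes with $\sB_q$), I obtain $u \in \sA_q^*$ satisfying both $\intt(u)|_{\sB_q}=\alpha_q$ and $\intt(u)|_{\sC_q}=\gamma_q$ simultaneously.

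The second step is to homogenize. Clearing denominators, $v := su \in \sA \setminus \{0\}$ for some $s\in\sT\setminus\{0\}$, and $v$ satisfies $vb=\alpha(b)v$ for $b\in\sB$ and $vc=\gamma(c)v$ for $c\in\sC$. Decomposing $v=\sum_\ga v_\ga$ and comparing degree-$(\ga+\deg x)$ components of $vx=\psi(x)v$ (with $\psi=\alpha$ on $\sB$, $\psi=\gamma$ on $\sC$, both degree-preserving) shows each $v_\ga$ satisfies the same intertwining relations. Consider the graded intertwining space $\sM := \{x\in\sA : xb = \alpha(b)x\ \forall b\in\sB,\ xc = \gamma(c)x\ \forall c\in\sC\}$, a graded $\sZ$-subbimodule of $\sA$. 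A centralizer computation in $\sA_q$ gives $\sM_q = \sZ_q u$, a rank-one $\sZ_q$-module (any two intertwiners differ by an element of $C_{\sA_q}(\sB_q)\cap C_{\sA_q}(\sC_q) = \sC_q\cap\sB_q = \sZ_q$); hence $\sM$ is rank one over the graded field $\sZ$, so $\sM = \sZ v_*$ for some nonzero homogeneous $v_*\in\sA$. Finally, $v_*$ is a unit of $\sA$: it is a nonzero $\sZ_q$-multiple of $u$, hence a unit of $\sA_q$ and a non-zero-divisor in $\sA$, so the $\sT$-linear map $L_{v_*}: \sA\to\sA$ is injective between finite-dimensional $\sT$-spaces of equal dimension, therefore surjective, giving a right inverse in $\sA$; a symmetric argument with $R_{v_*}$ gives a left inverse. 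Setting $a := v_*$ completes the argument. I expect the rank-one analysis of $\sM$ and the verification that $\sZ_q$ is a field (using that $\sZ$ is a graded field finite over $\sT$) to be the most delicate bookkeeping; the finite-dimensionality argument then cleanly promotes the unit of $\sA_q$ to a unit of $\sA$.

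For the final assertion, if $\sC_0$ is a division ring then by the graded Wedderburn theorem $\sC\cong_g\sEnd_\sE(\sN)$ with $\sN$ forced to consist of a single homogeneous $\sE$-line (otherwise $\sC_0$ would contain a nontrivial matrix algebra over $\sEz$ or a direct product thereof), so $\sC$ is itself a graded division algebra, and symmetrically for $\sC'$. Then $\sC$ and $\sC'$ are graded division algebras over $\sZ$, uniquely determined up to graded $\sZ$-isomorphism by their classes in the graded Brauer group $\sBr(\sZ)$, which are computed from $[\sA]$ and $[\sB]\cong[\sB']$ via the graded double centralizer theorem. The graded isomorphism $\alpha$ identifies the two Brauer classes via the twist $\alpha|_\sZ$ on $\sZ$, yielding a graded isomorphism $\gamma: \sC\to\sC'$ with $\gamma|_\sZ = \alpha|_\sZ$.
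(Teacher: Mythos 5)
Your overall strategy---pass to $\sA_q = \sA\otimes_\sT q(\sT)$, apply ungraded Skolem--Noether there, adjust by an element of $\sC_q^*$ to implement both $\alpha$ and $\gamma$ simultaneously, then descend to a homogeneous intertwiner---is the natural one, and the rank-one computation for the intertwining space $\sM_q$ via the ungraded double centralizer theorem is correct. (The paper itself does not prove this proposition; it is cited from [HwW$_2$].) Two steps, however, do not stand as written. The claim that ``the $\sT$-linear map $L_{v_*}:\sA\to\sA$ is injective between finite-dimensional $\sT$-spaces of equal dimension, therefore surjective'' is false for general $\sT$-linear endomorphisms over a graded field: for $\sT=\sTz[x,x^{-1}]$, multiplication by $1+x$ is injective (as $\sT$ is a domain) but not surjective, since $1+x\notin\sT^*$. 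What rescues your step is that $v_*$ is homogeneous, so $L_{v_*}$ is a \emph{graded} $\sT$-linear map; its determinant with respect to any homogeneous $\sT$-basis of $\sA$ is then homogeneous of degree $[\sA\col\sT]\deg(v_*)$, hence nonzero forces it to be a unit of $\sT$. Cleaner still, and in line with what the paper does elsewhere in \S2: $\Nrd_\sA(v_*)=\Nrd_{\sA_q}(v_*)\ne 0$ since $v_*\in\sA_q^*$, and $\Nrd_\sA(v_*)\in\sT_{\deg(\sA)\deg(v_*)}$ is homogeneous, hence a unit of $\sT$, so $v_*\in\sA^*$.

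In the last part, ``so $\sC$ is itself a graded division algebra, and symmetrically for $\sC'$'' is not justified: the hypothesis concerns only $\sC_0$, and there is no symmetric hypothesis on $\sC'_0$. You also conflate $\sZ=Z(\sB)$ with $\sZ':=Z(\sB')=\alpha(\sZ)$, which need not coincide. The repair: from $\sC_0$ a division ring conclude $\sC$ is a graded division algebra (Lemma~2.2(ii) applied over the graded field $\sZ$). By the graded double centralizer theorem, $\alpha_*[\sC]=[\sC']$ in $\sBr(\sZ')$ and $\dim_{\sZ'}\sC'=\dim_\sZ\sC$; since this class is represented by the graded division algebra $\sC\otimes_{\sZ,\alpha}\sZ'$ of that dimension, it \emph{follows} (by uniqueness in graded Wedderburn) that $\sC'$ is a graded division algebra and that $\sC'\cong_g\sC\otimes_{\sZ,\alpha}\sZ'$, which unwinds to the required graded isomorphism $\gamma\colon\sC\to\sC'$ with $\gamma|_\sZ=\alpha|_\sZ$.
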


If $\sE$ is a graded division algebra over a graded field
$\sT$, we write $[\sE\col\sT]$ for $\dim_\sT(\sE)$.  
A basic fact is the Fundamental Equality
\begin{equation}\label{fun}
[\sE\col \sT] \ = \ [\sEz\col\sTz] \, |\Ga_\sE\col\Ga_\sT|,
\end{equation}  
where $|\Ga_\sE\col\Ga_\sT|$ denotes the index in 
$\Ga_\sE$ of its subgroup $\Ga_\sT$.  Also,
it is known that $Z(\sEz)$ is abelian Galois over
$\sTz$, and there is a well-defined group 
epimorphism 
\begin{equation}\label{Theta}
\Theta_\sE \colon\Ga_\sE \to \Gal(Z(\sEz)/\sTz)
\quad\text{ given by } \Theta_\sE(\ga)(z) = aza\inv
\text { for any $z\in Z(\sEz)$ and $a\in \sE_\ga\setminus
\{0\}$}.
\end{equation}
Clearly, $\Ga_\sT\subseteq \ker(\Theta_\sE)$, so 
$\Theta_\sE$ induces an epimorphism of finite groups 
$\ov \Theta_\sE\colon\Ga_\sE/\Ga_\sT \to \Gal(Z(\sEz)/\sEz)$.

The terminology for different cases in \eqref{fun} is carried 
over from valuation theory:  We say that a graded 
field $\sS\supseteq \sT$ is {\it inertial over $\sT$}
if $[\sS_0\col\sTz] = [\sS\col\sT]<\infty$ and the 
field $\sS_0$ is separable over $\sTz$.  When this occurs,
$\Ga_\sS = \Ga_\sT$, and 
the graded monomorphism $\sS_0\otimes _\sTz \sT \to 
\sS$ given by multiplication in $\sS$ is surjective 
by dimension count; so $\sS\cong_g \sS_0\otimes _\sTz\sT$.
At the other extreme, we say that a graded field 
$\sJ\supseteq \sT$
is {\it totally ramified over $\sT$} if 
$|\Ga_\sJ\col\Ga_\sT|= [\sJ\col\sT] <\infty$.  
When this occurs, $\sJ_0 = \sTz$ and, more generally, 
for any $\ga \in \Ga_\sT$, we have $\sJ_\ga = \sT_\ga$
since $\dim_\sTz(\sJ_\ga) = \dim_{\sJ_0}(\sJ_\ga) = 1
= \dim_\sTz(\sT_\ga)$.

There is an extensive theory of finite-degree graded field  
extensions;  \cite{hwalg} is a good reference for what we 
need here.  Notably, there is a version of Galois theory:
  For graded fields $\sT \subseteq \sF$,
with $[\sF\col\sT] <\infty$, the (graded) Galois group 
of $\sF$ over $\sT$ is defined to be:
\begin{equation*}
\sGal(\sF/\sT) \ =  \ \{\psi\colon \sF \to \sF\mid
\psi \text{ is a graded field automorphism of $\sF$ and \  
$\psi|_\sT = \id$}\}.
\end{equation*}
Galois theory for graded fields follows easily from the 
classical ungraded theory since for the quotient fields
of $\sF$ and $\sT$ 
we have $q(\sF) \cong \sF \otimes _\sT q(\sT)$, 
so $[q(\sF)\col q(\sT)] = [\sF\col \sT]$, 
and there is a canonical isomorphism
${\sGal(\sF/\sT) \to \Gal(q(\sF)/q(\sT))}$ (the usual Galois
group) given by $\psi \mapsto \psi \otimes \id_{q(\sT)}$
(see \cite[Cor.~2.5(d), Th.~3.11 ]{hwalg}).   Thus, $\sF$~is 
Galois over $\sT$ iff $q(\sF)$ is Galois over $q(\sT)$,
iff $|\sGal(\sF/\sT)| = [\sF\col\sT]$, iff $\sT$
is the fixed ring of $\sGal(\sF/\sT)$.  This will arise 
here primarily in the inertial case: Suppose $\sS$ is a graded
field which contains and is inertial over~$\sT$, with 
$[\sS\col\sT]<\infty$.  For any $\psi\in \sGal(\sS/\sT)$ 
clearly the restriction $\psi|_{\sS_0}$ lies in 
$\Gal(\sS_0/\sTz)$.  Moreover, as 
 $\sS \cong_g \sS_0 \otimes_\sTz \sT$, for any 
$\rho \in \Gal(\sS_0/\sTz)$ we have $\rho \otimes \id_\sT
\in \sGal(\sS/\sT)$.  Thus, the restriction map 
$\psi \mapsto \psi|_{\sS_0}$ yields a canonical 
isomorphism $\sGal(\sS/\sT) \to \Gal(\sS_0/\sTz)$.  Hence,
as $[\sS\col\sT] = [\sS_0\col\sTz]$, $\sS$ is Galois over 
$\sT$ iff $\sS_0$ is Galois over $\sTz$. 

Just as in the ungraded case, we can use Galois graded 
field extensions to build central simple graded  algebras.
If $\sF$ is a Galois graded field extension of 
$\sT$, set $G = \sGal(\sF/\sT)$ and take any $2$-cocycle
$f\in Z^2(G, \sF^*)$.  Then we can build a crossed product
graded algebra $\sB = (\sF/\sT, G, f) = \bigoplus_{\si \in G}
\sF x_\si$ with multiplication given by $(ax_\si)(bx_\rho)
= a \si(b) f(\si,\rho) x_{\si\rho}$ for all $a,b \in \sF$, 
$\si, \rho\in G$. The grading is given by viewing 
$\sB$ as a left graded $\sF$-vector space with 
$(x_\si)_{\si\in G}$ as a homogeneous base with 
$\deg(x_\si) = 
\frac 1{|G|}\sum _{\rho\in G}\deg(f(\si,\rho))$.
A short calculation shows that $\deg(f(\sigma,\tau)\,x_{\sigma\tau})
= \deg(x_\sigma) + \deg(x_\tau)$ for all $\sigma,\tau\in G$; it follows
easily that $\sB$ is a graded $\sT$-algebra.
Indeed, $\sB$ is a simple graded algebra with $Z(\sB) 
\cong_g \sT$.  Conversely, if $\sA$~is any central 
 simple graded  
$\sT$-algebra containing~$\sF$ as a strictly maximal graded
subfield (i.e., ${[\sF\col\sT] = \deg(\sA) \,(= 
\sqrt{\dim_\sT(\sA)}\,\,)}$, then by the graded Double 
Centralizer Theorem
$C_\sA(\sF)=\sF= Z(\sF)$;
 so the graded Skolem-Noether
Theorem, Prop.~\ref{grSN} above, applies to the graded
isomorphisms in $G$, which yields that 
  $\sA\cong_g (\sF/\sT,G,f)$
for some $f\in Z^2(G, \sF^*)$.  From this one deduces, as
in the ungraded case,
that $\sBr(\sF/\sT) \cong H^2(G,\sF^*)$, where 
$\sBr(\sF/\sT)$ denotes the kernel of the canonical
map $\sBr(\sT) \to \sBr(\sF)$ given by $[\sA] \mapsto
[\sA\otimes_\sT \sF]$.  In particular, if $\sGal(\sF/\sT)$,
is cyclic, say with generator $\si$, then for any 
$b\in \sT^*$ we have the graded cyclic algebra
$\sC = (\sF/\sT, \si, b) = \bigoplus_{i=0}^{r-1}\sF y^i$, 
in which $ya = \si(a) y$ for all $a\in \sF$ and $y^r = b$, 
where $r = [\sF\col\sT]$. For the grading, we view $\sC$ as a 
left graded  $\sF$-vector space with homogeneous base
$(1, y, y^2, \ldots, y^{r-1})$ with 
$\deg(y^i) = \frac ir \deg(b)$.  Then $\sC$ is a 
 central  simple graded  $\sT$-algebra.

There are also norm maps in the graded setting:  If $\sT \subseteq \sF$
are graded fields with $[\sF\col\sT] <\infty$, then because 
$\sF$ is a free module the norm $N_{\sF/\sT}\colon \sF \to \sT$
can be defined by $c\mapsto \det(\lambda_c)$, where for 
$c\in \sF$, $\lambda _c\in \Hom_\sT(\sF,\sF)$ is the map $a\mapsto ca$.
Clearly, $N_{\sF/\sT}(c) = N_{q(\sF)/q(\sT)}(c)$, where $N_{q(\sF)/q(\sT)}$ 
is the usual norm for the quotient fields.  Also, if $c\in \sF$ is 
homogeneous, say $c\in \sF_\gamma$, then $N_{\sF/\sT}(c) \in 
\sT_{[\sF:\sT]\gamma}$.  Likewise, if $\sB$ is a central simple
graded $\sT$-algebra, then it is known that $\sB$ is an Azumaya algebra 
of constant rank $[\sB\col\sT]$ over~$\sT$;  hence there is a reduced
norm map $\Nrd_{\sB}\colon \sB \to \sT$.  It is easy to see that 
for the central ring of quotients $q(\sB)= \sB\otimes _\sT q(\sT)$ of $\sB$, we have 
$q(\sB)$ is a central simple algebra over the field
$q(\sT)$, and it is known 
(see \cite[proof of Prop.~3.2(i)]{hazwadsworth}) that for any $b\in \sB$, 
$\Nrd_{\sB}(b) = \Nrd_{q(\sB)}(b)$, where  $\Nrd_{q(\sB)}\colon q(\sB)
\to q(\sT)$ is the  reduced norm for $q(\sB)$. As usual, $b\in \sB^*$ iff 
$\Nrd_\sB(b)\in \sT^*$.  Also, if $b\in \sB_\gamma$, then  
$\Nrd_\sB(b) \in \sT_{\deg(\sB) \gamma}$.
Now assume further that $\sB$ is a graded division algebra, 
so that all its units are homogeneous.  Then
for the commutator group $[\sB^*, \sB^*]$ of $\sB$, 
we have
$[\sB^*, \sB^*] \subseteq \{b \in \sB\mid \Nrd_\sB(b) = 1\}
\subseteq \sB_0^*$.  We define
\begin{equation}\label{grsk} 
\SK(\sB) \ = \ \{b \in \sB\mid \Nrd_\sB(b) = 1\}\,\big/\, [\sB^*,\sB^*].
\end{equation} 
The fact that both terms in the right quotient lie in $\sB_0^*$ often
makes that calculation of $\SK(\sB)$ much more tractable in this graded 
setting than for ungraded division algebras.   

We need  terminology for some types of  simple graded  algebras 
and graded division algebras over a graded field~$\sT$.  
A central simple graded  $\sT$-algebra $\sI$  is said to be 
{\it inertial} (or unramified) if $[\sI_0\col\sT_0] = 
[\sI\col\sT]$.  When this occurs, the injective
graded $\sT$-algebra homomorphism
$\sI_0 \otimes _\sTz \sT \to \sI$ is surjective by 
dimension count.  So, $\Ga_\sI = \Ga_\sT$ and
$\sI\cong_g \sI_0\otimes _\sTz \sT$.  
Hence, $\sI_0$ must be a central simple 
$\sTz$-algebra.  Moreover, if we let $D$ be the
$\sTz$-central division algebra with $\sI_0\cong
M_\ell(D)$, then $D\otimes_\sTz \sT$ is clearly a graded 
division algebra over $\sT$ which is also inertial over $\sT$,
and $D\otimes_\sTz \!\sT\sim_g \sI$ (see Lemma~\ref{zerosimple}
below). 

The principal focus of this paper is on calculating 
$\SK$ and unitary $\SK$ for semiramified 
graded division algebras. 
Let $\sE$ be a central graded division algebra over 
a graded field $\sT$.  This $\sE$ is said to be 
{\it semiramified} if ${[\sE_0\col\sTz] = 
|\Gamma_\sE\col\Gamma_\sT| = \deg(\sE)}$ 
and $\sE_0$ is a field.  Since $\sEz = Z(\sEz)$, $\sEz$ is 
abelian Galois over~$\sTz$ and the epimorphism
$\ov\Theta_\sE\colon \Gamma_\sE/\Gamma_\sT\to 
\Gal(\sEz/\sTz)$ (see \eqref{Theta}) must be an isomorphism as 
${|\Ga_\sE/\Ga_\sT| = [\sEz\col \sTz] = |\Gal(\sEz/\sTz)|}$.
Furthermore, $\sE$ has the graded subfield 
$\sEz\sT \cong_g\sEz\otimes _\sTz \!\sT$, which is inertial and 
Galois over $\sT$ with $\sGal(\sEz\sT/\sT) 
\cong\Gal(\sEz/\sTz)$.  Because $[\sEz\sT \col \sT] = 
\deg(\sE)$, the graded Double Centralizer Theorem 
\cite[Prop.~1.5]{hwcor} shows that $C_\sE(\sEz\sT) = \sEz\sT$, 
and hence $\sEz\sT$ is a maximal graded subfield of $\sE$;
thus, $\sE$ is a graded abelian crossed product, as will be 
discussed in \S 3.

There is a significant special class of semiramified graded 
division algebras which are building blocks for  all semiramified algebras.  
We say that a $\sT$-central graded division algebra $\sN$
is {\it decomposably semiramified} (abbreviated DSR)
if $\sN$ has a  maximal graded subfield $\sS$ which is 
inertial over $\sT$ and another  maximal  graded subfield ~
$\sJ$ which is totally ramified over $\sT$.  The 
graded Double Centralizer Theorem yields that 
${[\sS\col \sT] = [\sJ\col\sT] = \deg(\sN)}$.  We thus have
\begin{equation}\label{DSRineqs}
\deg(\sN)\, = \, [\sJ\col\sT] \, = \, |\Ga_\sJ\col \Ga_\sT|
\, \le \, |\Ga_\sN\col \Ga_\sT| \quad \text{ and }
\quad \deg(\sN) \,=\, [\sS\col\sT]\, = \,[\sS_0\col\sTz]
\,\le [\sN_0\col\sTz].
\end{equation}
Since $|\Ga_\sN\col\Ga_\sT| \,[\sN_0\col\sTz] = [\sN\col\sT]
= \deg(\sN)^2$, the inequalities in \eqref{DSRineqs} must
be equalities, showing that $\sN_0 = \sS_0$ and $\Ga_\sN=
\Ga_\sJ$, hence $\sN$ is semiramified.  We call such an $\sN$
decomposably semiramified because it is always decomposable 
into a tensor product of cyclic semiramified graded division 
algebras (see Prop.~\ref{DSRdecomp} below for the unitary 
analogue to this).  The older term for such algebras 
is nicely semiramified (NSR).  

While our focus in this paper is on central graded division 
algebras we will often take tensor products of such algebras, 
obtaining simple graded  algebras which may have zero divisors.
The next lemma  allows us to recover information 
about the graded division algebra Brauer equivalent to such a 
tensor product.

\begin{lemma}\label{zerosimple}
Let $\sB$ be a central simple graded algebra over
the graded field $\sT$.  Let $\sD$ be the graded division 
algebra Brauer equivalent to $\sB$.  Suppose $\sB_0$ is a 
simple ring.  Then,
\begin{enumerate}[\upshape (i)]
\item
$\sB \!\cong_g\! M_\ell(\sD)$ for some $\ell$, where the matrix 
ring $M_\ell(\sD)$ is given the standard grading in 
which\break ${\big(M_\ell(\sD)\big)_\ga = M_\ell(\sD _\ga)}$ for 
all $\ga \in \Ga_\sD$.  Hence, $\sB_0\cong M_\ell(\sD_0)$,
$\Ga_\sB =\Ga'_\sB= \Ga_\sD$, and  $\Theta_\sB = \Theta_\sD$, 
where ${\Ga_\sB' = \{\deg(b) \mid b\in B^* \text{ and } b 
\text{ is homogeneous}\}}$, and
\begin{equation}\label{simpleTheta}
 \Theta_\sB\colon 
\Ga_\sB' \to \Gal(Z(\sB_0)/\sTz) \text{ is 
given by } \deg(b) \mapsto \intt(b)|_{Z(\sB_0)},
\text{ for any homogeneous $b\in \sB^*$}
\end{equation} 
where $\intt(b)$ denotes conjugation by $b$.
\item
$\sB$ is a graded division algebra if and only if $\sB_0$
is a division ring.
\end{enumerate}
\end{lemma}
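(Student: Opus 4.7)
The plan is to combine the graded Wedderburn theorem with the simplicity hypothesis on $\sB_0$ to pin down the homogeneous base of the Wedderburn module, and thereby identify $\sB$ with a standard-graded matrix ring over $\sD$.

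First I would invoke the graded Wedderburn theorem to write $\sB \cong_g \sEnd_\sD(\sM)$ for some finite-dimensional graded right $\sD$-vector space $\sM$, where $\sD$ is the graded division algebra with $\sB \sim_g \sD$. Fix a homogeneous $\sD$-base $(m_1,\ldots,m_\ell)$ of $\sM$ with $\deg(m_i) = \ga_i$. Grouping the $m_i$ by their cosets in $\Ga/\Ga_\sD$ yields the general decomposition $\sB_0 \cong \prod_j M_{\ell_j}(\sD_0)$ recalled in the paragraph preceding the lemma. The hypothesis that $\sB_0$ is simple forces there to be a single coset, so all $\ga_i$ lie in a common coset $\ga_0 + \Ga_\sD$.

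Then I would rescale: writing $\ga_i = \ga_0 + \de_i$ with $\de_i \in \Ga_\sD$ and choosing homogeneous units $u_i \in \sD^*_{-\de_i}$ (available because $\sD$ is a graded division algebra), replace each $m_i$ by $m_i u_i$ to obtain a homogeneous $\sD$-base of constant degree $\ga_0$. Since the graded structure on $\sEnd_\sD(\sM)$ depends on the base degrees only through their pairwise differences, this produces a graded isomorphism $\sB \cong_g M_\ell(\sD)$ with the standard grading $(M_\ell(\sD))_\ga = M_\ell(\sD_\ga)$. The isomorphism $\sB_0 \cong M_\ell(\sD_0)$ and the equality $\Ga_\sB = \Ga_\sD$ are then read off directly, while the diagonal embedding $\sD^* \hookrightarrow \sB^*$ yields $\Ga'_\sB \supseteq \Ga_\sD$, hence equality. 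For the assertion $\Theta_\sB = \Theta_\sD$: conjugation by any homogeneous $b \in \sB^*$ preserves $\sB_0$ and so restricts to an automorphism of $Z(\sB_0) = Z(\sD_0)$; testing on $b = \mathrm{diag}(d,\ldots,d)$ with $d \in \sD^*_\ga$ identifies that restriction with $\Theta_\sD(\ga)$, and any other homogeneous unit of $\sB$ of degree $\ga$ differs from this one by an element of $\sB_0^*$, whose conjugation acts trivially on $Z(\sB_0)$. This confirms well-definedness and gives the formula \eqref{simpleTheta}.

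Part (ii) is then quick: if $\sB$ is a graded division algebra it has no zero divisors, so the finite-dimensional $\sTz$-algebra $\sB_0$ is a domain and hence a division ring; conversely, if $\sB_0$ is a division ring it is in particular simple, so (i) gives $\sB_0 \cong M_\ell(\sD_0)$, forcing $\ell = 1$ and therefore $\sB \cong_g \sD$, which is a graded division algebra. The main technical point is the rescaling step, which is exactly where the simplicity hypothesis on $\sB_0$ is consumed; once the base of $\sM$ has been identified as a uniform shift of $\sD^\ell$, all the claims about $\sB_0$, $\Ga_\sB$, $\Ga'_\sB$, and $\Theta_\sB$ follow by inspection of $M_\ell(\sD)$ with the standard grading.
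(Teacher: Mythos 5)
Your proposal is correct and follows essentially the same route as the paper: graded Wedderburn, a homogeneous $\sD$-base grouped by cosets of $\Ga_\sD$, the identification $\sB_0\cong\prod_j M_{\ell_j}(\sD_0)$, and simplicity forcing a single coset, after which rescaling to a base of constant degree gives the standard grading on $M_\ell(\sD)$. The only cosmetic difference is ordering (you conclude $s=1$ before rescaling; the paper rescales within each coset first), and you spell out the $\Theta_\sB=\Theta_\sD$ check and the ``no zero divisors $\Rightarrow$ $\sB_0$ a domain $\Rightarrow$ division ring'' step slightly more explicitly than the paper does, but the substance is identical.
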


\begin{proof}
(i)  By the graded Wedderburn Theorem \cite[Prop.~1.3]{hwcor},
$\sB\cong_g \sEnd_\sD(\sV)$ for some right graded  
vector space~$\sV$ of $\sD$.  The grading on 
$\sEnd_\sD(\sV)$ is given by 
$$
\big(\sEnd_\sD(\sV)\big)_\ep
\ = \  \{f\in \sEnd_\sD(\sV) \mid f(\sV_\delta) \subseteq
\sV_{\ep+\delta} \text{ for all }\delta \in \Ga_\sV\}.
$$
Take a homogeneous
$\sD$-base $(v_1, \ldots, v_\ell)$ of $\sV$, and let
$\gamma_i = \deg(v_i)$, for $1\le i\le \ell$; then, 
$\Ga_\sV = \bigcup_{i = 1}^\ell \, \ga_i + \Ga_\sD$.
Let $\delta_1 +\Ga_\sD, \ldots, \de_s+\Ga_\sD$ be the 
distinct cosets of $\Ga_\sD$ appearing in $\Ga_\sV$,
and let $t_j$ be the number of $i$ with $\gamma_i\in
\de_j+ \Ga_\sD$. So, $t_1 + \ldots + t_s = \ell$.
By replacing each  $v_i$ by a 
$\sD^*$-multiple of it,  we may assume that 
$\deg(v_i) = \de_j$ whenever $\ga_i\in \de_j+\Ga_\sD$.
Then, we can reindex $(v_1, \ldots, v_\ell) = 
(v_{1\hsp 1}, \ldots, v_{1\hsp t_1}, \ldots, v_{s\hsp 1}, 
\ldots, v_{s \hsp t_s})$ with $\deg(v_{jk}) =
\de_j$ for all~$j,k$.  Then,
$\sV_{\de_j} = \sD_0\text{-span}(v_{j1}, \ldots, v_{jt_j})$ 
for $j = 1, 2, \ldots, s$, and 
\begin{align*}
\textstyle
\big(\sEnd_\sD(\sV)\big)_0\, &= \, 
\big\{f\in \sEnd_\sD(\sV) \mid f(\sV_\ep)
\subseteq \sV_\ep\text{ for all }\ep \in \Ga_\sV\big\} \\ 
&\cong \ 
\textstyle\prod\limits_{j=1}^s\End_{\sD_0}(\sD_0\text{-span}
(v_{j\hsp 1}, \ldots, v_{j\hsp t_j})) \ 
\cong  \ \prod\limits_{j=1}^s M_{t_j}(\sD_0).
\end{align*}
This is a direct product of $s$ simple algebras.  Since
we have assumed that $\sB_0$ is simple, we must have $s=1$,
i.e., all the $v_i$ have degree $\de_1$.  It is then clear that 
when we use the base $(v_1, \ldots , v_\ell)$ for the 
isomorphism $\sEnd_\sD(\sV) \cong M_\ell(\sD)$, the grading
on $M_\ell(\sD)$ induced by the isomorphism is the standard
grading.  Thus, $\sB \cong_g M_\ell(\sD)$ and hence
$\sB_0\cong M_\ell(\sD_0)$ and $\Ga_\sB = \Ga_\sD$.  
Then, $\Ga'_\sB = \Ga'_{M_\ell(\sD)} = \Ga_\sD$ and,
when we identify $Z(\sB_0)$ with $Z(M_\ell(\sD_0))$
and with $Z(\sD_0)$, clearly $\Theta_\sB = \Theta_{M_\ell(\sD)}
= \Theta_\sD$.

(ii) If $\sB$ is a graded division algebra, then every nonzero
homogeneous element of $\sB$ lies in $\sB^*$. In particular,
$\sB_0\setminus\{0\} \subseteq \sB^*$, so $\sB_0$ is a division
 ring. Conversely, suppose $\sB_0$ is a division ring.
Since $\sB_0$ is then simple,  part (i) applies,  showing
 that for some graded division algebra $\sD$, we have 
 $\sB\cong_g M_\ell(\sD)$ where $\sB_0 \cong M_\ell(\sD_0)$.
 Necessarily $\ell =1$, as $\sB_0$ is a division ring.
\end{proof}

\begin{corollary}\label{ItensorE}
Let $\sI$ and $\sE$ be central graded division algebras over
a graded field $\sT$, with $\sI$ inertial, and let~$\sD$~be 
the graded division algebra  with $\sD \sim_g \sI \otimes_\sT
\sE$.  Then, $\sD_0 \sim \sI_0\otimes _\sTz \sEz$, $Z(\sD_0)
\cong Z(\sEz)$, $\Ga_\sD = \Ga_\sE$, and $\Theta_\sD = 
\Theta_\sE$.
\end{corollary}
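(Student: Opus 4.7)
The plan is to reduce everything to Lemma~\ref{zerosimple} by showing that the zero component of $\sB := \sI\otimes_\sT \sE$ is simple, then reading off the four claims from the structural information the lemma provides.

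First I would exploit inertiality: since $\sI$ is inertial, the graded $\sT$-algebra homomorphism $\sI_0\otimes_\sTz\sT\to\sI$ is a graded isomorphism, so $\sB=\sI\otimes_\sT\sE\cong_g \sI_0\otimes_\sTz\sE$. Reading off homogeneous components gives $\sB_0=\sI_0\otimes_\sTz\sEz$ and $\Ga_\sB=\Ga_\sE$ (this last equality uses $\Ga_\sI=\Ga_\sT\subseteq\Ga_\sE$).

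Next I would verify that $\sB_0$ is a simple ring. Since $\sI_0$ is a central simple $\sTz$-algebra and $\sEz$ is a simple $\sTz$-algebra (it is a division ring containing $\sTz$ in its center, as $\sT\subseteq Z(\sE)$ forces $\sTz\subseteq Z(\sEz)$), the standard facts about tensor products with a central simple factor give that $\sI_0\otimes_\sTz\sEz$ is simple and has center $Z(\sEz)$. This step is the only substantive one; the rest is bookkeeping.

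With $\sB_0$ simple, Lemma~\ref{zerosimple}(i) applies to $\sB$, yielding a graded isomorphism $\sB\cong_g M_\ell(\sD)$ (with the standard grading) together with $\sB_0\cong M_\ell(\sD_0)$, $\Ga_\sB=\Ga_\sD$, and $\Theta_\sB=\Theta_\sD$. From $\sB_0\cong M_\ell(\sD_0)$ and $\sB_0=\sI_0\otimes_\sTz\sEz$ we read off $\sD_0\sim \sI_0\otimes_\sTz\sEz$ and $Z(\sD_0)=Z(\sB_0)=Z(\sEz)$. Combining $\Ga_\sD=\Ga_\sB$ with Step~1 gives $\Ga_\sD=\Ga_\sE$.

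The last identity $\Theta_\sD=\Theta_\sE$ I would verify directly from the description \eqref{simpleTheta}: for $\ga\in\Ga_\sE=\Ga_\sD$, pick $e\in\sE_\ga\setminus\{0\}$, so $1\otimes e\in\sB_\ga\cap\sB^*$ is homogeneous of degree $\ga$. Under the identification $Z(\sB_0)=1\otimes Z(\sEz)=Z(\sEz)$, conjugation by $1\otimes e$ on $Z(\sB_0)$ is exactly conjugation by $e$ on $Z(\sEz)$, which by definition is $\Theta_\sE(\ga)$. Hence $\Theta_\sD(\ga)=\Theta_\sB(\ga)=\Theta_\sE(\ga)$, completing the proof.
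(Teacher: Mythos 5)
Your proof is correct and follows essentially the same route as the paper's: identify $\sB := \sI\otimes_\sT\sE$ with $\sI_0\otimes_\sTz\sE$ via inertiality of $\sI$, observe $\sB_0\cong\sI_0\otimes_\sTz\sEz$ is simple with center $Z(\sEz)$, invoke Lemma~\ref{zerosimple}(i), and read off the conclusions. The only difference is that you spell out the verification $\Theta_\sB=\Theta_\sE$ via conjugation by elements $1\otimes e$, a step the paper leaves implicit.
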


\begin{proof}
Let $\sB = \sI\otimes_\sT \sE$.  Since $\sI \cong_g \sI_0
\otimes _\sTz \!\sT$, we have $\sB \cong_g \sI_0 \otimes
_\sTz \sE$.  Hence, 
${\sB_0\cong \sI_0 \otimes _\sTz \sEz}$,\break 
${Z(\sB_0) \cong Z(\sI_0) \otimes _\sTz Z(\sEz) \cong Z(\sEz)}$, 
and $\Ga_\sB = \Ga_\sE$.  Moreover, $\sB_0$ is simple
as $\sI_0$ is central 
simple over~$\sTz$,  so 
Lemma~\ref{zerosimple} applies to $\sB$.  In particular, 
$\Ga'_\sB = \Ga_\sB$ and $\Theta_\sB = \Theta_\sE$.
Since $\sD$ is the graded division algebra with 
$\sD \sim_g \sB$, the Lemma yields $\sD_0 \sim \sB_0 \cong 
\sI_0\otimes_\sTz \sEz$, so $Z(\sD_0) \cong Z(\sB_0) \cong
Z(\sEz)$, and  $\Ga_\sD = \Ga_\sB = \Ga_\sE$, and 
$\Theta_\sD = \Theta_\sB = \Theta_\sE$.  
\end{proof}

%% \vfill\eject

\section{Abelian crossed products and nonunitary $\SK$ 
for semiramified algebras}
\label{abcp}

Let $M$ be a finite degree abelian Galois extension of a field $K$, and let 
$H = \Gal(M/K)$.  Let ${X(M/K) = \text{Hom}(H, \qq/\zz)}$, the character group 
of $H$.  Take any cyclic decomposition $H = \langle
\sigma_1 \rangle \times \ldots \times \langle \sigma_k \rangle$, 
and let $r_i$ be the order of $\sigma_i$ in $H$. Let $(\chi_1 , \ldots, \chi_k)$
be the base of $X(M/K)$ dual to $(\si_1, \ldots, \si_k)$; so $\chi_i(\si_j) = 
\delta_{ij}/r_i +\zz$, where $\de_{ij} = 1$ if $j = i$ and $= 0$ if $j\ne i$. 
Let $L_i$ be the fixed field of $\ker(\chi_i)$.  So, 
$M = L_1 \otimes_K \ldots \otimes_K L_k$, and for each $i$, $L_i$ is cyclic 
Galois over $K$ with $[L_i\col K] = r_i$ and $ \Gal(L_i/K) = 
\langle \sigma_i|_K\rangle$.  Let $A$ be any central simple $K$-algebra 
containing $M$ as a strictly maximal subfield (i.e., $M$ is a 
maximal subfield of $A$  with $  [M\col K] = \deg(A)$).  By the Double Centralizer
Theorem, the centralizer $C_A(M)$ is $M$.
Recall that
every algebra class in $\Br(M/K)$ is represented by a unique 
such $A$.  By 
Skolem-Noether, for each $i$ there is $z_i \in A^*$ with 
$\intt(z_i)|_M = \si_i$, where 
$\intt(z_i)$ denotes conjugation by $z_i$.  Set
$$
u_{ij}  \, =  \,z_iz_jz_i^{-1}z_j^{-1}\ \ \text{and} \ \ b_i  \, = \,  z_i^{r_i}
.
$$
Since $\intt(u_{ij})|_M = \si_i\si_j\si_i\inv\si_j\inv = \id_M$ and 
$\intt(b_i)|_M = \si^{r_i} = \id_M$,  all the $u_{ij}$ and $b_i$ 
lie in $C_A(M)^* = M^*$.
Take the index set $\mathcal I=\prod_{i=1}^k \{0,1,2,\dots,r_i-1\}
\subseteq \zz^k$.  For $\bold i = (i_1, \ldots , i_k)\in \mathcal I$, 
set $\si^{\bold i} = \si_1^{i_1} \ldots \si_k^{i_k}$ and 
$z^{\bold i} = z_1^{i_1}\ldots z_k^{i_k}$.  So, 
$\intt(z^{\bold i})|_M = \si^{\bold i}$ and, as the map
$\bold i \mapsto \si^{\bold i}$ is a bijection 
$\mathcal I \to H$, we 
have the crossed product decomposition
$$
A \ =  \ \textstyle \bigoplus\limits_{\bold i\, \in \,  \mathcal I} \, 
Mz^{\bold i}. 
$$
For $\bold i , \bold j \in \mathcal I$, if we set 
$\bold i 
\bold *
%\overset\jdot + 
\bold j$ to be the element of $\mathcal I$ congruent to 
$\bold i + \bold j$ mod $r_1\zz \times \ldots \times r_k\zz$ in 
$\zz^k$, and set  
$$
f(\si ^{\bold i}, \si ^{\bold j})  \ = \ z^{\bold i} z^{\bold j}
 (z^{ \bold i \bold * \bold j})\inv \ \in \ M^*,
$$
then $f\in Z^2(H, M^*)$ and the multiplication in $A$ is given by 
$$
a  z^{\bold i} \,\cdot \, c z^{\bold j} \ = 
\ a\si^{\bold i}(c)           
f(\si ^{\bold i}, \si ^{\bold j})\, z^{ \bold i \bold *
\bold j}, \ \ \text{ for all}\ \ a,c \in M \ \text { and }
\bold i, \bold j
\in \mathcal I.
$$
Since each $f(\si ^{\bold i}, \si ^{\bold j})$ is expressible as a computable 
product of the $u_{ij}$ and the $b_i$ and their images under~$H$, 
the multiplication for $A$ is completely determined by $M$, $H$, and the 
$u_{ij}$ and $b_i$.  Thus, we write 
${A = A(M/K, \bsi, \bu,\bb)}$, where $\bsi = (\si_1, \ldots, \si_k)$, 
$\bu=  (u_{ij})_{i=1, \,j=1}^{k\ \ \ \,k}$, and $\bb = 
(b_1, \ldots, b_k)$.

It is easy to check (cf.~ \cite[Lemma~1.2]{as} or \cite [p.~423]{tignol})   
that the $u_{ij}$ and the $b_i$ satisfy the 
following relations, for all $i,j,\ell$,
\begin{equation}\label{urels}
u_{ii}  \, = \,  1,  \ \ u_{ji}  \, = \,  u_{ij}\inv,  \ \ 
\si_i(u_{j\ell}) \si_j(u_{\ell i}) \si_{\ell}(u_{ij})  \ = \ 
u_{j\ell} u_{\ell i} u_{ij}  
\end{equation}
and 
\begin{equation}\label{brel}
N_{M/M^{\langle \si_i\rangle}}(u_{ij})  \ = \  b_i/\si_j(b_i),
\end{equation}
where $M^{\langle \si_i\rangle}$ is the fixed field of 
$M$ under $\langle \si_i\rangle$.
It is known (cf.~\cite[Th.~1.3]{as})  that  
for any family of 
$u_{ij}$ and $b_i$ in $M^*$ satisfying \eqref{urels} 
and \eqref{brel} there is 
a central simple $K$-algebra $A(M/K, \bsi, \bu, \bb)$.

\begin{lemma}\label{onen}
Let $A=A(M/K,\bsi,\bu,\bb)$ as above, and let 
$B=A(M/K,\bsi,\bf v, \bf c)$. 
Then, there is a well-defined abelian crossed product
$A(M/K,\bsi,\bf w, \bf d)$ where 
$w_{ij}=u_{ij}v_{ij}$ and $d_i=b_ic_i$ for all $i,j$.
Moreover,
$A\otimes_K B \sim A(M/K,\bsi,\bf w, \bf d)$ 
$($Brauer equivalent$)$.
\end{lemma}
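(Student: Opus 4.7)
The plan has two steps: verify the defining relations for $(w_{ij},d_i)$, so that $A(M/K,\bsi,\bf w, \bf d)$ exists as an abelian crossed product, and then establish the Brauer equivalence $A\otimes_K B \sim A(M/K,\bsi,\bf w, \bf d)$ via a Morita argument using a primitive idempotent of $M\otimes_K M$.

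For the first step, since $u_{ij},v_{ij},b_i,c_i$ all lie in the abelian group $M^*$, the relations \eqref{urels} are preserved under termwise multiplication: $w_{ii}=u_{ii}v_{ii}=1$, $w_{ji}=w_{ij}^{-1}$, and the cocycle identity $\si_i(w_{j\ell})\si_j(w_{\ell i})\si_\ell(w_{ij})=w_{j\ell}w_{\ell i}w_{ij}$ follows by multiplying the analogous identities for $\bu$ and $\bf v$. The norm $N_{M/M^{\lan\si_i\ran}}$ is multiplicative, so $N_{M/M^{\lan\si_i\ran}}(w_{ij}) = (b_i/\si_j(b_i))(c_i/\si_j(c_i)) = d_i/\si_j(d_i)$, giving \eqref{brel}. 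Existence of $A(M/K,\bsi,\bf w, \bf d)$ then follows from \cite[Th.~1.3]{as}.

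For the second step, set $C=A\otimes_K B$ and use the splitting $M\otimes_K M\cong\prod_{\tau\in H}M$ given by $a\otimes b\mapsto(a\tau(b))_\tau$. Let $e\in M\otimes_K M$ be the primitive idempotent corresponding to $\tau=\id$. Since $C$ is central simple over $K$, $e$ is a full idempotent, so $eCe$ is Morita (hence Brauer) equivalent to $C$. The key identity is $(a\otimes 1)e=(1\otimes a)e$ for all $a\in M$, which holds because the $\id$-component of $a\otimes 1-1\otimes a$ is $a-a=0$. So $\iota\colon M\to eCe$, $a\mapsto(a\otimes 1)e$, is a subfield embedding. Now set $\tilde z_i = z_i^A\otimes z_i^B\in C$. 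Since $H$ is abelian, conjugation by $\tilde z_i$ acts on $M\otimes_K M\cong\prod_\tau M$ as $\si_i$ applied coordinatewise, and therefore fixes every primitive idempotent $e_\tau$; hence $\tilde z_i e=e\tilde z_i$, and $z_i' := \tilde z_i e\in eCe$ is well-defined. Direct calculation gives $z_i'\iota(a)(z_i')^{-1}=\iota(\si_i(a))$, and via the key identity,
\[
z_i'z_j'(z_i')^{-1}(z_j')^{-1} \,=\, (u_{ij}\otimes v_{ij})e \,=\, (u_{ij}v_{ij}\otimes 1)e \,=\, \iota(w_{ij}),
\]
together with $(z_i')^{r_i} = (b_i\otimes c_i)e = \iota(d_i)$. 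So $eCe$ contains a homomorphic image of $A(M/K,\bsi,\bf w, \bf d)$.

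To finish, note that conjugation by $z^{\bold i}_A\otimes z^{\bold j}_B$ carries $e=e_\id$ to $e_{\si^{\bold i-\bold j}}$, which is orthogonal to $e$ when $\bold i\ne\bold j$; hence $e(z^{\bold i}_A\otimes z^{\bold j}_B)e = 0$ in that case, yielding the decomposition $eCe=\bigoplus_{\bold i\in\mathcal I}\iota(M)\cdot(z')^{\bold i}$ of $K$-dimension $n^2$. Since $A(M/K,\bsi,\bf w, \bf d)$ is central simple of the same dimension, $eCe\cong A(M/K,\bsi,\bf w, \bf d)$, and the Brauer equivalence follows. The main obstacle is the bookkeeping around $e$: verifying the key identity $(a\otimes 1)e=(1\otimes a)e$ (which collapses the tensor-product cocycle termwise onto the $e$-component, producing the parameters $w_{ij}$ and $d_i$) and the commutation $\tilde z_i e = e\tilde z_i$ (which relies on abelianness of $H$).
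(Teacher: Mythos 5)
Your proof is correct, but it takes a genuinely different route from the paper's. For the existence of $A(M/K,\bsi,\mathbf w, \mathbf d)$, you and the paper argue identically: the relations \eqref{urels}--\eqref{brel} are preserved under termwise multiplication because $\si_i$ and the norms are multiplicative. For the Brauer equivalence, the paper works cohomologically: it writes down the explicit normalized $2$-cocycle $f\in Z^2(H,M^*)$ encoding $(\bu,\bb)$ via the table in \eqref{cocycle}, notes that the product cocycle $f\cdot g$ has precisely the table belonging to $(\mathbf w,\mathbf d)$, and then invokes the group isomorphism $H^2(H,M^*)\cong\Br(M/K)$. You instead give a Morita-theoretic argument: cut $C=A\otimes_K B$ down by the primitive idempotent $e\in M\otimes_K M$ over the identity coset, use the identity $(a\otimes 1)e=(1\otimes a)e$ to collapse the two copies of $M$, observe that the $\tilde z_i=z_i^A\otimes z_i^B$ commute with $e$ (because $H$ is abelian, so $\si_i\otimes\si_i$ acts coordinatewise on $\prod_\tau M$ and fixes each $e_\tau$), compute the commutators and $r_i$-th powers of $z_i'=\tilde z_i e$ to get exactly $(\mathbf w,\mathbf d)$, and finish by showing $e(z_A^{\mathbf i}\otimes z_B^{\mathbf j})e=0$ for $\mathbf i\ne\mathbf j$ and counting dimensions, so $eCe\cong A(M/K,\bsi,\mathbf w,\mathbf d)$. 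Your argument is more self-contained --- it essentially reproves, in this special case, that the abelian-crossed-product encoding of $H^2(H,M^*)\cong\Br(M/K)$ is multiplicative --- whereas the paper's proof is shorter because it delegates exactly that point to the cohomological isomorphism together with the explicit cocycle table \eqref{cocycle}. One minor step you should make explicit: $eCe$ receives a $K$-algebra homomorphism from $A(M/K,\bsi,\mathbf w,\mathbf d)$ because the latter is the algebra presented by those generators and relations (cf.\ the construction in \cite[Th.~1.3]{as}); injectivity is then automatic from simplicity, and surjectivity is your dimension count.
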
 

\begin{proof}
Because the $u_{ij}$ and $b_i$ satisfy \eqref{urels} and \eqref{brel} as do the $v_{ij}$
and $c_i$, and the $\si_i$ and the norm maps are multiplicative, 
the $w_{ij}$ and $d_i$ also satisfy \eqref{urels} and \eqref{brel}.
Therefore $A(M/K,\bsi,\bf w, \bf d)$ 
is a well-defined abelian crossed product.

We have the  $2$-cycle $ f\in Z^2(H,M^*)$ representing $A$ 
defined as above by,
 $f(\si ^{\bold i}, \si ^{\bold j})  \ = 
\ z^{\bold i} z^{\bold j} (z^{ 
\bold i \bold *  \bold j})\inv$.
The relations $z_i^{r_i}=b_i$ and $[z_i,z_j]=u_{ij}$ are  
encoded in $f$ by
\begin{equation}\label{cocycle}
f(\sigma_i^{\ell},\sigma_i)=
\begin{cases}
1, &\text{if $0\leq \ell \leq r_i-2$}\\
b_i, &\text{if $\ell=r_i-1$}
 \end{cases}
 \text{\,\,\,\,\,\,\,\, and \,\,\,\,\,      } 
f(\sigma_i,\sigma_j)=
\begin{cases}1 &\text{if }i< j\\
u_{ij} &\text{if $i>j$}.  
\end{cases}
\end{equation} 
We likewise build a cocycle $g\in Z^2(H,M^*)$ for
$B=A(M/K,\bsi,\bf v,\bf c)$.  Then, the cocycle $f\ccdot g$ satisfies 
conditions corresponding to those for $f$ in \eqref{cocycle},  so 
 $f\ccdot g$ is 
a cocycle
for ${C=A(M/K,\bsi, \bf w, \bf d)}$ where ${w_{ij}=u_{ij}v_{ij}}$ 
and
$d_i=b_ic_i$. From the group isomorphism 
${H^2(H,M^*)\cong\Br(M/K)}$
it follows that ${A\otimes_K B \sim C}$.
\end{proof}

In Tignol's terminology  in \cite{tignol}, a central simple 
$K$-algebra containing $M$ as a strictly maximal subfield 
{\it decomposes according to $M$} if 
$A\cong (L_1/K, \si_1, b_1)\otimes_K \ldots \otimes _K 
(L_k/K, \si_k, b_k)$ for some $b_1, \ldots, b_k \in K^*$.
Clearly then, $A \cong A(M/K,\bsi,\bold 1,\bb)$, i.e., each 
$u_{ij}= 1$.  Conversely, for any algebra 
$A(M/K,\bsi,\bold 1,\bb)$ 
 (i.e., the $z_i$ commute with each 
other),  each $z_j$ centralizes 
$b_i = z_i^{r_i}$, so $b_i\in M^H = K$ and the algebra 
decomposes 
according to $M$.  The collection of such algebras yields an 
important 
distinguished subgroup $\Dec(M/K)$ of $\Br(M/K)$, i.e.
 \begin{align}\label{decdef}
\begin{split}
\Dec(M/K) \ &= \ \{\,[A] \in \Br(M/K)\,|\ A\text{ decomposes according to 
$M$}\}\\  &= \ 
\{\,[  A(M/K,\bsi,\bu,\bb)] \ |\ 
\text{every }u_{ij} = 1 \text{ and every } b_i\in K^*\,\}.
\end{split}
\end{align} 
Since $\Br(L_i/K) = 
\{ \,[(L_i/K,\si_i, b)] \, |\ b\in K^*\,\}$, 
we have also $\Dec(M/K) = \prod_{i = 1}^k\Br(L_i/K)
\subseteq\Br(M/K)
$.  Tignol 
also 
also points out in \cite[p.~426]{tignol} a homological 
characterization:  
From the short exact sequence 
of trivial $H$-modules $0 \to \zz \to \qq \to \qq/\zz \to 0$
the long exact cohomology sequence yields the connecting 
homomorphism 
$\delta\colon H^1(H, \qq/\zz) \to H^2(H, \zz)$, which is an 
isomorphism since 
$H^i(H, \qq) = 1$ for $i\ge 1$ as $\qq$ is uniquely divisible.  For any 
$\chi\in X(M/K) = H^1(H, \qq/\zz)$ and any $c\in K^* = 
H^0(H, M^*)$
it is known (cf. ~\cite [p.~204, Prop.~2]{serre}) that under the cup 
product pairing 
${\cupp\colon H^2(H,\zz) \times H^0(H, M^*) \to H^2(H,M^*) = 
\Br(M/K)}$,\break
we have ${\delta(\chi)\cupp c = [(N/K,\rho|_N,c)]}$, where $N$ 
is the fixed field of 
$\ker(\chi)$ and $\rho \in H$ is determined by\break ${\chi(\rho) = 
(1/|\chi|) +\zz\in \qq/\zz}$. Thus, the algebra class
$[(L_1/K, \si_1, b_1)\otimes_K \ldots \otimes _K 
(L_k/K, \si_k, b_k)]$ in $\Br(M/K)$ corresponds to 
${(\delta(\chi_1) \cupp b_1) +}$ $\ldots +(\delta(\chi_k) 
\cupp b_k)$ in 
$H^2(H, M^*)$. Since the cup product is bimultiplicative and 
$X(M/K) = \langle \chi_1, \ldots , \chi_k\rangle$, we have
\begin{equation}\label{dec} 
\Dec(M/K) \ = \ \big\langle \im\big(\cupp\colon H^2(H, \zz) \times 
H^0(H, M^*)
\to H^2(H, M^*)\,\big)\big\rangle \ = \ \textstyle \prod\limits_
{\substack{K \subseteq L \subseteq M \\ \Gal(L/K) 
\text{ cyclic}}} 
\Br(L/K),
\end{equation} 
showing that $\Dec(M/K)$ is independent of the choice of the 
$\si_i$ 
and the $L_i$. (Actually, Tignol uses \eqref{dec} as his definition
of $\Dec(M/F)$, and proves in \cite[Cor.~1.4]{tignol} that this is 
equivalent to the definition given here in \eqref{decdef}.)

The case when  $H$ is  bicyclic is of particular interest, i.e.,
 $H=\langle\sigma_1\rangle\times
\langle\sigma_2\rangle$ and $M=L_1\otimes_K L_2$. Then, for 
any
algebra $A=A(M/K,\bsi, \bu,\bb)$, if we set $u=u_{12}$, 
then
$u$ determines all the $u_{ij}$ as $u_{21}=u_{12}^{-1}$ and
$u_{11}=u_{22}=1$. We write, for short, $A=A(u,b_1,b_2)$. 
The conditions in \eqref{brel} can then be restated:
\begin{equation}\label{bicyclicbrel}
b_1\in M^{\lan\si_1\ran}\,  =\,  L_2, 
\quad b_2\in M^{\lan\si_2\ran} \, =\,  L_1,
\quad N_{M/L_2}(u) \, =\,  b_1/\si_2(b_1), 
\quad N_{M/L_1}(u) \, =\,  \si_1(b_2)/b_2.
\end{equation}
Note that   $N_{M/K}(u) = N_{L_2/K}(b_1/\si_2(b_1)) =1$.
An 
easy
calculation~(cf.~\cite[Th.~1.4]{as}) shows that
\begin{equation} \label{ghgt}
\begin{split}
A(u,b_1,b_2)\, \cong \,  A(u',b_1',b_2') \text{ if and only if there exist 
$c_1,c_2 \in M^*$ such that}\qquad\qquad\qquad\qquad\\
u'\, =\, \big[c_1/\sigma_2(c_1)\big]\big[\sigma_1(c_2)/c_2\big]u, 
\quad
b_1'\, =\, N_{M/L_2}(c_1)b_1, \quad \text{and \, } 
b_2'\, =\, N_{M/L_1}(c_2)b_2. \  \ \quad\qquad\qquad
\end{split}
\end{equation}

These observations can be formulated homologically:
Recall that $\widehat{H}^{-1}(H,M^*)=\ker(N_{M/K})/I_H(M^*)$, 
where
$\ker(N_{M/K})=\{m\in M^* \mid N_{M/K}(m)=1 \}$ and, as
$H = \lan \si_1 \ran \times \lan \si_2\ran$,
$$
I_H(M^*) \ = \ \big\{\,[a/\sigma_1(a)]\,[b/\sigma_2(b)]\mid 
a,b \in M^* \big\}.
$$
 We define a map
\begin{equation}
\eta\colon\Br(M/K) \longrightarrow \widehat{H}^{-1}(H,M^*) 
\text{ \ \ given by  \ \ } \big[A(u,b_1,b_2)\big]\mapsto uI_H(M^*).
\end{equation}
By~(\ref{ghgt}) above $\eta$ is well-defined, and 
Lemma~\ref{onen}
shows that $\eta$ is a group homomorphism.  
Given any $u\in M^*$ with $N_{M/K}(u) = 1$, Hilbert 90 
gives $b_1 \in L_2^*$ and $b_2\in L_1^*$ so that the
conditions in \eqref{bicyclicbrel} are satisfied and the 
algebra $A(u,b_1,b_2)$ exists.  Therefore $\eta$
is surjective.
By~(\ref{ghgt}),
$$
\ker(\eta) \ = \ \big\{[A(u,b_1,b_2)] \mid u=1 \big \} 
 \ = \ \Dec(M/K),
$$ 
so $\eta$ yields an isomorphism
\begin{equation}\label{njnj}
\Br(M/K)\big / \Dec(M/K)  \ \cong \  \widehat{H}^{-1}(\Gal(M/K),M^*)
\qquad\text{whenever $M$ is bicyclic over $K$}.
\end{equation}
This isomorphism is known (see, e.g., \cite[Remarque, pp.~
427--428]{tignol}); indeed, it follows by comparing  
Draxl's formula \cite[Kor.~8, p.~133]{draxlSK} for $\SK$
of the division algebras considered by Platonov 
in \cite{platonov} with Platonov's formula
in \cite[Th.~4.11, Th.~4.17]{platonov}.  I learned
of this description of the isomorphism from Tignol. 
 Its relevance for $\SK$ calculations is shown 
in the next proposition, which is the graded
version of \eqref{DSRSK1} and \eqref{platformula}
above.

\begin{proposition}\label{NSRSK}
Suppose $\sN$ is a DSR central graded division algebra
over the graded field $\sT$.
Then, 
\begin{enumerate}[\upshape (i)]
\item
$\SK(\sN) \ \cong \ \wh H\inv(H,\sN_0^*)\ \text{ where }
H = \Gal(\sN_0/\sT_0)$.
\item  If 
$\sN_0 \cong L_1 \otimes_{\sT_0} L_2$ with each 
$L_i$ cyclic Galois over $\sT_0$, then
$$
\SK(\sN)  \ \cong \  \Br(\sN_0/\sT_0) \big / 
\Dec(\sN_0/\sT_0).$$
\end{enumerate}
\end{proposition}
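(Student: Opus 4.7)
The plan is to exploit the DSR structure of $\sN$ to compute both subgroups in the definition of $\SK(\sN)$ directly. First, because $\Nrd_\sN$ carries $\sN_\gamma$ into $\sT_{\deg(\sN)\gamma}$, any $a\in\sN^*$ with $\Nrd_\sN(a)=1$ is homogeneous of degree $0$, so $\{a\in\sN^*:\Nrd_\sN(a)=1\}\subseteq\sN_0^*$. Moreover, the inertial maximal graded subfield $\sS\cong_g\sN_0\otimes_{\sTz}\sT$ provided by the DSR hypothesis contains $\sN_0$, and for $n\in\sN_0$ one has $\Nrd_\sN(n)=N_{\sS/\sT}(n)=N_{\sN_0/\sTz}(n)$. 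So $\{a\in\sN^*:\Nrd_\sN(a)=1\}=\ker(N_{\sN_0/\sTz})$.

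The core of the argument is to show $[\sN^*,\sN^*]=I_H(\sN_0^*)$, where $I_H(\sN_0^*)=\lan\sigma(n)/n:\sigma\in H,\,n\in\sN_0^*\ran$. The totally ramified maximal graded subfield $\sJ$ of $\sN$ is crucial here. For each $\delta\in\Ga_\sJ=\Ga_\sN$, the component $\sJ_\delta$ is one-dimensional over $\sTz$, so every homogeneous $a\in\sN^*$ can be written as $a=jn$ with $j\in\sJ_{\deg(a)}^{*}$ and $n\in\sN_0^*$; thus $\sN^*=\sJ^*\ccdot\sN_0^*$ as a set. Since $\ov\Theta_\sN\colon\Ga_\sN/\Ga_\sT\to H$ is an isomorphism, for each $\sigma\in H$ there is a homogeneous $j\in\sJ^*$ with $\intt(j)|_{\sN_0}=\sigma$, and then $[j,n]=\sigma(n)/n$; this yields $I_H(\sN_0^*)\subseteq[\sN^*,\sN^*]$.

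For the reverse inclusion, a direct commutator calculation using that $\sJ$ is commutative (so $j_1 j_2 = j_2 j_1$) and $\intt(j_i)|_{\sN_0}\in H$ gives, for $\alpha=\intt(j_1)|_{\sN_0}$ and $\beta=\intt(j_2)|_{\sN_0}$,
$$
[j_1 n_1,\,j_2 n_2] \ = \ \bigl[\alpha(n_1)/\alpha\beta(n_1)\bigr]\cdot\bigl[\alpha\beta(n_2)/\beta(n_2)\bigr] \ = \ \alpha\bigl(n_1/\beta(n_1)\bigr)\cdot\beta\bigl(\alpha(n_2)/n_2\bigr).
$$
Each factor is an $H$-translate of a generator of $I_H(\sN_0^*)$, and $I_H(\sN_0^*)$ is $H$-stable, so every such commutator lies in $I_H(\sN_0^*)\subseteq\sN_0^*$. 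Since every element of $\sN^*$ is homogeneous and of the form $jn$, commutators of this form generate $[\sN^*,\sN^*]$, which completes the proof that $[\sN^*,\sN^*]=I_H(\sN_0^*)$. Combined with the first paragraph this gives
$$
\SK(\sN)\,=\,\ker(N_{\sN_0/\sTz})\big/I_H(\sN_0^*)\,=\,\wh H^{-1}(H,\sN_0^*),
$$
proving (i).

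Part (ii) is then immediate: when $\sN_0\cong L_1\otimes_{\sTz}L_2$ is bicyclic, combining (i) with the isomorphism \eqref{njnj} established earlier in this section yields $\SK(\sN)\cong\Br(\sN_0/\sTz)/\Dec(\sN_0/\sTz)$. The main obstacle is the commutator computation in the third paragraph; the DSR hypothesis is exactly what supplies pairwise-commuting homogeneous lifts of the generators of $H$ (taken inside $\sJ$), which forces the $u_{ij}$ to vanish in the abelian crossed product presentation of $\sN$ and is what makes the commutator identity reduce cleanly to elements of $I_H(\sN_0^*)$.
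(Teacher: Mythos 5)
Your proof is correct, but it takes a genuinely different route from the paper's. The paper disposes of part (i) by citing \cite[Cor.~3.6(iv)]{hazwadsworth} (which in turn rests on the exact sequence $H\wedge H \to \wh H^{-1}(H,\sE_0^*) \to \SK(\sE) \to 1$ of \cite[Cor.~3.6(ii)]{hazwadsworth}, with the $H\wedge H$ contribution vanishing for DSR algebras because the $u_{ij}$ can be made trivial), and then derives (ii) from (i) and \eqref{njnj}, exactly as you do. You instead give a self-contained computation: you identify $\{a\in\sN^*:\Nrd_\sN(a)=1\}$ with $\ker(N_{\sN_0/\sT_0})$ via the degree argument and the inertial maximal graded subfield $\sS\cong_g\sN_0\otimes_{\sT_0}\sT$, and then prove $[\sN^*,\sN^*]=I_H(\sN_0^*)$ directly by writing each homogeneous unit as $jn$ with $j\in\sJ^*$ and $n\in\sN_0^*$ and expanding the commutator. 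Your expansion of $[j_1n_1,j_2n_2]$ is correct (I checked it), the containment $I_H(\sN_0^*)\subseteq[\sN^*,\sN^*]$ follows from taking $n_1=1$, and the $H$-stability of $I_H(\sN_0^*)$ is what lets you conclude the reverse containment. What your argument buys is transparency: it isolates exactly where the DSR hypothesis enters---the totally ramified graded subfield $\sJ$ supplies pairwise-commuting homogeneous lifts of $H$, which forces the $u_{ij}$ (equivalently the image of $H\wedge H$) to vanish---whereas the paper's proof hides this inside the cited corollary. What you lose relative to the paper's framework is generality: the exact sequence of \cite[Cor.~3.6(ii)]{hazwadsworth} covers arbitrary semiramified $\sE$ and is what powers Theorem~\ref{goth} later, whereas your commutator calculation exploits the commutativity of $\sJ$ and would need modification outside the DSR case.
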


\begin{proof}
(i) was given in \cite[Cor.~3.6(iv)]{hazwadsworth}, and (ii) follows from 
(i) and \eqref{njnj} above.
\end{proof}

We will generalize  Prop.~\ref{NSRSK}  in  Th.~\ref{goth} below 
by giving  formulas for $\SK(\sE)$    when
$\sE$ is semiramified but not necessarily DSR.  For this we need,
first, a graded version of the abelian crossed products 
described at the beginning of this section. 
Second, we need a graded version of the $I\otimes N$
decomposition for semiramified division algebras over a 
Henselian valued field. Here $I$ is inertial 
and $N$ is DSR. (See~\cite[Lemma~5.14, Th.~5.15]{jw} for the valued 
$I\otimes N$ decomposition.)

Here is the graded version of abelian crossed products. 
Let $\sB$ be a 
central simple graded algebra over a graded field $\sT$. 
Assume that $\sB$ contains a maximal graded subfield 
$\sS$ with ${[\sS \col\sT] = \deg(\sB) \ (=\sqrt{[\sB\col\sT]}\, )}$
such that $\sS$ is Galois over $\sT$ and $H = \sGal(\sS/\sT)$
is abelian.  We have $C_\sB(\sS) = \sS$ by the graded Double Centralizer
Theorem.  For any cyclic decomposition $H= \lan \si_1\ran \times 
\ldots \times \lan \si_k\ran$, the graded Skolem-Noether Theorem,
Prop.~\ref{grSN}, is available  
as $C_\sB(\sS) = \sS = Z(\sS) $;  it shows that for each 
$i$ there is $y_i\in \sB^*$ with 
$y_i$~homogeneous and $\intt(y_i)|_\sS = \si_i$.  Set 
$c_i = y_i^{r_i}$ where $r_i$ is the order of $\si_i$ in $H$, and 
set $v_{ij} = y_iy_jy_i\inv y_j\inv$.    Then, each 
$c_i \in C_\sB(\sS)^* = \sS^*$
with $\deg(y_i) = \frac 1{r_i}\deg(c_i)$, 
and each $v_{ij} \in \sS_0^*$.  For each ${\bold i = (i_1, \ldots, i_k)\in
\mathcal I=\prod_{j=1}^k
\{0,1,2,\dots,r_j-1\}}$, set $y^{\bold i} = y_1^{i_1}\ldots y_k^{i_k}$.
Then, $\intt(y^{\bold i})|_\sS = \si^{\bold i}$, and we have
\begin{equation}\label{grabcrprod}
\sB \ = \ \textstyle\bigoplus\limits_{\bold i \in \mathcal I}
 \, \sS\, y^{\bold i}.  
\end{equation} 
For, the sum in the equation is direct since $\sB \otimes _\sT
q(\sT)  = \bigoplus_{ \bold i \in \mathcal I}
(\sS\otimes _\sT q(\sT))\, y^\bold i$ by the ungraded case.
Then equality holds in \eqref{grabcrprod} by dimension count.
Note that $\sB$ is a left graded  $\sS$-vector space with homogeneous
base $\big(y^{\bold i}\big)_{\bold i \in \mathcal I}$, and 
\begin{equation}
\deg(y^{\bold i}) \ =\ \textstyle \sum\limits _{j= 1}^k\frac {i_j}{r_j} \deg(c_j).
\end{equation}
So,
\begin{equation}\label{gradeinfo}
\textstyle \Gamma_\sB \, = \, \big \lan \frac1{r_1}\deg(c_1), \ldots, 
\frac1{r_k}\deg(c_k)\big\ran + \Ga_\sS \qquad \text{and}\qquad
\text{each}\ \ B_\delta \, = \,  \bigoplus\limits_{\bold i \in \mathcal I}\,
S_ {(\de - \deg(y^{\bold i}))}y^{\bold i}.
\end{equation}
Since $\sB$ is determined as a graded $\sT$-algebra by $\sS$,
the~$\si_i$, the~$v_{ij}$, and the $c_i$, we write $\sB = 
\sA(\sS/\sT, \bsi, \bold v, \bold c)$, where $\bsi = (\si_1, \ldots,
\si_k)$, $\bold v = (v_{ij})_{i = 1,j=1}^{k \ \ \ k}$, and 
${\bold c = (c_1, \ldots, c_k)}$.
Note that the $v_{ij}$ and the $c_i$ satisfy the identities 
corresponding  to \eqref{urels} and \eqref{brel}.  Conversely, given any 
$v_{ij} \in \sS_0^*$ and $c_i \in \sS^*$ satisfying those identities
there is a central simple graded $\sT$ algebra 
$\sA(\sS/\sT, \bsi, \bold v,\bold c)$. This is obtainable as 
$\sB = \bigoplus _{\bold i \in \mathcal I}\sS\,y^{\bold i}$  within
the ungraded abelian crossed product $A = A(q(\sS)/q(\sT), \bsi, 
\bold v, \bold c)$, with the grading on $\sB$ determined by 
that on $\sS$ and  $\deg(y_i) = \frac 1{r_i}\deg(c_i)$, as described above.
To see that $\sB$ is a graded ring, one uses that each $\si\in H$ is a 
(degree-preserving) graded automorphism of 
$\sS$ and that $\deg(y^{\bold i}\cdot y^{\bold j}) = \deg(y^{\bold i})
+\deg(y^{\bold j})$ for all $\bold i, \bold j\in \mathcal I$, since all 
the $v_{ij}$ have degree $0$.   
This $\sB$ is graded simple, since any nontrivial proper homogeneous ideal 
would localize to a nontrivial proper ideal of the simple $q(\sT)$-algebra $A$.

\begin{remark}\label{abeliancpprod}
The graded analogue to Lemma~\ref{onen} holds, with the same
proof, since for $\sS$~Galois over $\sT$, we have 
$\sBr(\sS/\sT) \cong H^2(\sGal(\sS/\sT), \sS^*)$.
\end{remark}

The graded abelian crossed products we work with here will 
have $\sS$ inertial over $\sT$ and will be semiramified, 
as described in the next lemma.

\begin{lemma} \label{incp} 
Let $\sS$ be an inertial graded field extension of $\sT$ with 
$\sS$ abelian Galois over $\sT$.  Let\break ${H = \sGal(\sS/\sT)=
 \lan \si_1\ran \times \ldots \times \lan \si_k\ran}$ as above 
with $r_i$ the order of $\si_i$, and let 
$\sB = \sA(\sS/\sT, \bsi, \bold v, \bold c)$ be a graded abelian
crossed product.  Let $\delta_i = \frac 1{r_i}\deg(c_i) \in \Gamma_\sB$
and $\ov {\delta_i} = \delta_i + \Gamma_\sT \in \Gamma_\sB/ \Gamma_\sT$.
Then, $\sB$ is a semiramified graded division algebra 
if and only if each $\ov{\delta_i}$ has order $r_i$ and $\ov{\delta_1}, 
\ldots, \ov{\delta_k}$ are independent in $\Gamma_\sB /\Gamma_\sT$.
When this occurs, $\sB_0= \sS_0$ and $\Gamma_\sB/\Gamma_\sT
= \lan\ov{\de_1}\ran\times \ldots \times \lan\ov{\de_k}\ran
\cong H$.   
\end{lemma}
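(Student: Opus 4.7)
The plan is to exploit the explicit decomposition $\sB = \bigoplus_{\bold i \in \mathcal I} \sS\, y^{\bold i}$ with $\deg(y^{\bold i}) = \sum_{j=1}^k i_j \delta_j$, together with the fact that $\sS$ is inertial (so $\Gamma_\sS = \Gamma_\sT$ and $r_j\delta_j = \deg(c_j) \in \Gamma_\sS = \Gamma_\sT$). This immediately gives $\Gamma_\sB = \lan \delta_1,\ldots,\delta_k\ran + \Gamma_\sT$, hence $\Gamma_\sB/\Gamma_\sT = \lan \ov{\delta_1},\ldots,\ov{\delta_k}\ran$. Because each $\ov{\delta_j}$ has order dividing $r_j$, there is a well-defined surjection $\zz/r_1 \oplus \cdots \oplus \zz/r_k \twoheadrightarrow \Gamma_\sB/\Gamma_\sT$, so $|\Gamma_\sB \col \Gamma_\sT| \le r_1 \cdots r_k$, with equality if and only if the $\ov{\delta_j}$ are independent and each $\ov{\delta_j}$ has its full order $r_j$. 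That pins down the arithmetic side.

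For the \emph{if} direction, assume the independence condition. Then the map $\bold i \mapsto \deg(y^{\bold i}) + \Gamma_\sT$ from $\mathcal I$ to $\Gamma_\sB/\Gamma_\sT$ is injective, so $\deg(y^{\bold i}) \in \Gamma_\sT = \Gamma_\sS$ holds only for $\bold i = \bold 0$. Plugging into the formula $\sB_0 = \bigoplus_{\bold i \in \mathcal I} \sS_{-\deg(y^{\bold i})}\, y^{\bold i}$ from \eqref{gradeinfo} forces $\sB_0 = \sS_0$. Since $\sS_0$ is a field (so in particular a division ring), Lemma~\ref{zerosimple}(ii) yields that $\sB$ is a graded division algebra. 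Finally, $[\sS_0 \col \sTz] = [\sS \col \sT] = r_1 \cdots r_k = \deg(\sB)$, and $|\Gamma_\sB \col \Gamma_\sT| = r_1\cdots r_k = \deg(\sB)$ by the count above, so $\sB$ is semiramified by definition and the asserted description of $\Gamma_\sB/\Gamma_\sT$ holds.

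For the \emph{only if} direction, assume $\sB$ is a semiramified graded division algebra. By definition, $|\Gamma_\sB \col \Gamma_\sT| = \deg(\sB) = r_1 \cdots r_k$. But the surjection from the first paragraph shows $|\Gamma_\sB \col \Gamma_\sT| \le r_1 \cdots r_k$ with equality precisely when each $\ov{\delta_j}$ has order $r_j$ and $\ov{\delta_1},\ldots,\ov{\delta_k}$ are independent; equality forces exactly this conclusion.

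The only real obstacle is the \emph{if} direction, specifically verifying that $\sB_0 = \sS_0$ (equivalently, ruling out nonzero contributions from $\bold i \ne \bold 0$ in the degree-zero component); everything else is bookkeeping with the Fundamental Equality~\eqref{fun} and Lemma~\ref{zerosimple}. That step depends crucially on the combination of the inertiality of $\sS$ (forcing $\sS_\gamma = 0$ off $\Gamma_\sT$) and the independence of the $\ov{\delta_j}$, which together produce the disjointness of degrees needed.
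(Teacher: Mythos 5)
Your proof is correct. The overall scaffolding matches the paper's (use of \eqref{gradeinfo}, the degree count, and Lemma~\ref{zerosimple}(ii)), but the \emph{only if} direction runs differently and a bit more cleanly: you deduce independence and full order of the $\ov{\delta_j}$ purely from the cardinality equality $|\Gamma_\sB\col\Gamma_\sT| = r_1\cdots r_k$ via the tautological surjection $\zz/r_1\oplus\cdots\oplus\zz/r_k \twoheadrightarrow \Gamma_\sB/\Gamma_\sT$, whereas the paper instead invokes the epimorphism $\ov\Theta_\sB\colon\Gamma_\sB/\Gamma_\sT \to \Gal(\sS_0/\sT_0)$, observes that $\ov\Theta_\sB(\ov{\delta_i}) = \sigma_i$, and pulls the direct-sum decomposition of $H$ back across this isomorphism. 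Your counting argument is self-contained and avoids appealing to the $\Theta$ map, while the paper's version makes the identification $\Gamma_\sB/\Gamma_\sT \cong H$ explicit in a way that is useful later in the paper; both are valid. In the \emph{if} direction you also compute $\sB_0 = \sS_0$ directly from \eqref{gradeinfo} via injectivity of $\bold i \mapsto \deg(y^{\bold i}) + \Gamma_\sT$, which sidesteps the dimension-count inequality chain the paper uses and in particular avoids quietly invoking the Fundamental Equality for the a priori non-division algebra $\sB$; that is a minor but genuine improvement in transparency.
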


\begin{proof}
Since $\sS$ is inertial and Galois over $\sT$, $\sS_0$ is Galois over 
$\sT_0$ with $\Gal(\sS_0/\sT_0) \cong \sGal(\sS/\sT) = H$.  We identify
$H$ with $\Gal(\sS_0/\sT_0)$.
We have $\sS_0\subseteq \sB_0$ and $[\sS_0\col \sT_0] = 
[\sS\col\sT] = \deg(\sB)$. 

Suppose $\sB$ is a semiramified graded division algebra.
Then, ${[\sB_0\col \sT_0] = \deg(\sB) = [\sS_0\col\sT_0]}$,
so  ${\sB_0 = \sS_0}$. 
   Since $\sB$ is semiramified, 
the epimorphism $\ov\Theta_\sB\colon \Gamma_\sB/\sT \to H$ is an 
isomorphism, as noted in \S2.
When we represent ${\sB = \bigoplus_{\bold i \in \mathcal I}\sS y^{\bold i}}$
as above, since $\intt(y_i) = \si_i$ and 
$\deg(y_i) = \frac 1{r_i}\deg(c_i) = \delta_i$, 
we have $\ov \Theta_\sB(\ov{\delta _i}) = \si_i$. 
Hence, $\ov{\delta_i}$ has the same order $r_i$ as $\si_i$, and
$$
 \Gamma_\sB/\Gamma_\sT \ = \ \ov \Theta_\sB^{\, -1} (H) 
\ = \ \ov \Theta_\sB^{\, -1}(\lan\si_1\ran) \times\ldots \times 
\ov \Theta_\sB^{\, -1}(\lan\si_k\ran)\ = \ \lan\ov{\de_i}\ran
\times \ldots \times \lan \ov{\de_k}\ran,
$$
so the $\ov{\de_i}$ are independent in $\Gamma_\sB/\Ga_\sT$.  

Conversely, suppose each $\ov{\de_i}$ has order $r_i$ and 
the $\ov{\de_i}$ are independent in $\Ga_\sB/\Ga_\sT$.  Then,
$$
|\Ga_\sB\col\Ga_\sT|\!\hsp  \ \ge \  \textstyle\prod\limits_{i = 1} ^ k
|\lan\ov{\de_i}\ran| \ = \ r_1\ldots r_k  \ = \ |H| \ = \  \deg(\sB).
$$
Hence,
\begin{equation}\label{B0}
[\sB_0\col \sT_0] \ = \ [\sB\col\sT] \big/ |\Ga_\sB\col\Ga_\sT| \ 
\le \,\hsp \deg(\sB)^2/\deg(\sB) \, = \ [\sS_0\col\sT_0].
\end{equation}
Since $\sS_0 \subseteq \sB_0$, \eqref{B0} shows that $\sB_0 = \sS_0$, 
so equality holds in \eqref{B0}.  Since $\sB_0$ is a field, $\sB$
is a graded division algebra by Lemma~\ref{zerosimple}(ii), and it is 
semiramified by the equality in \eqref{B0}.
\end{proof}

Observe  that if $\sE$ is any semiramified graded $\sT$-central 
division algebra, then $\sE$ is a graded abelian crossed product
as described in Lemma~\ref{incp}.
For,  $\sEz\sT$ is a  maximal graded subfield 
of $\sE$ which  is inertial and Galois over $\sT$ with $\sGal(\sEz\sT/\sT)
\cong \Gal(\sEz/\sTz)$, which is abelian.

\bigskip

\begin{proposition}\label{INdecomp}
Let $\sE$ be a semiramified central graded division
algebra over the graded field $\sT$.  Then,
\begin{enumerate}[\upshape (i)]
\item
There exist graded $\sT$-central division algebras 
$\sI$ and $\sN$ such that 
$\sI$ is inertial, $\sN$ is DSR, and 
$\sE \sim_g \sI\otimes_\sT \sN$ in $\sBr(\sT)$.  When this occurs,
$\sN_0\cong \sE_0$, $\Gamma_N = \Gamma_E$, $\Theta_\sN
= \Theta_\sE$, and $\sE_0$ splits $\sI_0$.
\item For any other decomposition 
$\sE \sim_g \sI'\otimes_\sT \sN'$ with $\sI'$ inertial
and $\sN'$ DSR, we have $\sI_0' \equiv \sI_0\ 
(\modd\ 
\Dec(\sE_0/\sT_0)\,)$.  
\end{enumerate}
\end{proposition}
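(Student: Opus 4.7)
The plan is to prove both parts by presenting $\sE$ as a graded abelian crossed product over its inertial maximal graded subfield $\sS := \sEz\sT$ and then manipulating the crossed-product data. I would fix a cyclic decomposition $H = \sGal(\sS/\sT) = \lan\si_1\ran\times\ldots\times\lan\si_k\ran$, let $r_i$ be the order of $\si_i$, and choose homogeneous $y_i \in \sE^*$ with $\intt(y_i)|_\sS = \si_i$, yielding $\sE = \sA(\sS/\sT,\bsi,\bold v,\bold c)$ with $v_{ij} = y_iy_jy_i^{-1}y_j^{-1} \in \sEz^*$ and $c_i = y_i^{r_i} \in \sS^*$. The key initial observation is that $\deg(c_i) \in \Gamma_\sS = \Gamma_\sT$, because $\sS$ is inertial over $\sT$.

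For part (i), I would pick $c_i' \in \sT^*$ with $\deg(c_i') = \deg(c_i)$ (possible since $\sT$ is a graded field) and set $\sN := \sA(\sS/\sT,\bsi,\bold 1,\bold{c'})$. The defining relations~\eqref{urels}--\eqref{brel} hold trivially because $H$ fixes each $c_i' \in \sT$; Lemma~\ref{incp} then identifies $\sN$ as a semiramified graded division algebra, and the commuting generators $y_i'$ with $(y_i')^{r_i} = c_i' \in \sT^*$ produce a totally ramified maximal graded subfield $\sT[y_1',\ldots,y_k']$, making $\sN$ DSR. Next, set $b_i := c_i(c_i')^{-1} \in \sEz$ and form the ungraded algebra $\sI_0 := A(\sEz/\sTz,\bsi|_\sEz,\bold v,\bold b)$: this is well defined because each $b_i$ has degree $0$ and lies in $\sEz^{\lan\si_i\ran}$ (using $\si_i(c_i) = c_i$ and $c_i' \in \sT \subseteq \sEz^H$), and the relations~\eqref{urels}--\eqref{brel} for $(\bold v,\bold b)$ are inherited from those for $(\bold v,\bold c)$ since every $\si_j$ fixes $c_i'$. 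Setting $\sI := \sI_0\otimes_\sTz\sT$ (inertial, graded isomorphic to $\sA(\sS/\sT,\bsi,\bold v,\bold b)$), the graded analogue of Lemma~\ref{onen} recorded in Remark~\ref{abeliancpprod} gives $\sI\otimes_\sT\sN \sim_g \sA(\sS/\sT,\bsi,\bold v,\bold c) = \sE$. Finally, Corollary~\ref{ItensorE} yields $\sE_0 \sim \sI_0\otimes_\sTz\sEz$ in $\Br(\sEz)$, and since $\sE_0 = \sEz$ is trivial in its own Brauer group, $\sEz$ splits $\sI_0$.

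For part (ii), assume $\sE \sim_g \sI'\otimes_\sT\sN'$ with $\sI'$ inertial and $\sN'$ DSR. Applying Corollary~\ref{ItensorE}, $\sN'_0$ is a field with $Z(\sN'_0) = \sEz$ and $\Gamma_{\sN'} = \Gamma_\sE$; a $\sTz$-dimension count forces $\sN'_0 = \sEz$. Since every DSR graded division algebra is a tensor product of cyclic semiramified graded division algebras (the nonunitary counterpart of the later Prop.~\ref{DSRdecomp}), I may write $\sN' = \sA(\sS/\sT,\bsi,\bold 1,\bold{c^*})$ for some $c_i^* \in \sT^*$, reusing the \emph{same} cyclic decomposition $\bsi$. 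The condition $\Gamma_{\sN'} = \Gamma_\sE$ forces $\deg(c_i^*) - \deg(c_i) \in r_i\Gamma_\sT$, so picking $t_i \in \sT^*$ of degree $(\deg(c_i^*)-\deg(c_i))/r_i$ and replacing the generator $y_i^*$ of $\sN'$ by $y_i^* t_i^{-1}$ rewrites $\sN'$ with $\deg(c_i^*) = \deg(c_i)$. Repeating the construction of (i) identifies $\sI' \cong_g \sI_0'\otimes_\sTz\sT$, where $\sI_0' := A(\sEz/\sTz,\bsi|_\sEz,\bold v,\bold{b^*})$ with $b_i^* := c_i(c_i^*)^{-1}$. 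Lemma~\ref{onen} then gives $[\sI_0]\cdot[\sI_0']^{-1} = [A(\sEz/\sTz,\bsi|_\sEz,\bold 1,\bold d)]$, where $d_i = b_i(b_i^*)^{-1} = c_i^*(c_i')^{-1}$; since $c_i^*,c_i' \in \sT^*$ both have degree $\deg(c_i)$, each $d_i \in \sT_0^* = \sTz^*$, so by~\eqref{decdef} this class lies in $\Dec(\sEz/\sTz)$, yielding $\sI_0' \equiv \sI_0 \pmod{\Dec(\sEz/\sTz)}$. The main technical hurdle is the degree bookkeeping that forces the $d_i$ into $\sTz^*$ rather than merely $\sT^*$; this is what makes the rescaling of $y_i^*$ in (ii) essential, and it is the point of the careful choice $\deg(c_i') = \deg(c_i)$ made in~(i).
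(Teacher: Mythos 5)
The approach mirrors the paper's proof of the unitary version (Prop.~\ref{uINdecomp}), to which the paper explicitly defers for this proposition, and is essentially correct in outline; the degree bookkeeping that you flag as the technical crux, forcing $d_i\in\sTz^*$, is the right point and is handled soundly.

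There is one small but genuine gap: you define $\sI := \sI_0\otimes_\sTz\sT$ with $\sI_0 = A(\sEz/\sTz, \bsi|_{\sEz}, \bold v, \bold b)$, but $\sI_0$ is only a central simple $\sTz$-algebra and need not be a division algebra, so your $\sI$ need not be a graded \emph{division} algebra as the proposition requires. The fix is routine but should be stated: apply the graded Wedderburn theorem together with Lemma~\ref{zerosimple} to write $\sI_0\otimes_\sTz\sT\cong_g M_\ell(\sD)$ for a graded division algebra $\sD$, verify by the dimension count $[\sD_0\col\sTz]=\tfrac1{\ell^2}[\sI_0\col\sTz]=\tfrac1{\ell^2}[\sI_0\otimes_\sTz\sT\col\sT]=[\sD\col\sT]$ that $\sD$ is still inertial, and take $\sI=\sD$; the paper carries out exactly this step for $\wh\sI$ in the proof of Prop.~\ref{uINdecomp}(i). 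The same imprecision recurs in part (ii): the assertion $\sI'\cong_g\sI_0'\otimes_\sTz\sT$ should be weakened to the Brauer equivalence $\sI'_0\sim A(\sEz/\sTz,\bsi|_\sEz,\bold v,\bold{b^*})$ in $\Br(\sTz)$, which is all the argument actually uses. With these adjustments the proof is complete and matches the paper's; your use of Lemma~\ref{onen} directly to produce $[\sI_0][\sI'_0]^{-1}=[A(\sEz/\sTz,\bsi,\bold 1,\bold d)]$ is a mild streamlining of the paper's detour through the auxiliary algebras $\sB$ and $\sC$.
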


We do not give a proof of Prop.~\ref{INdecomp} because it is a simpler
version of the proof of the analogous unitary result, which is 
Prop.~\ref{uINdecomp} below.  Also, Prop.~\ref{INdecomp} is  the 
graded analogue of a known result for semiramified division algebras
over Henselian valued fields, \cite[Lemma~5.14, Th.~5.15]{jw}, and the graded result
given here is deducible from the Henselian one.

\begin{lemma}\label{morit}
For the semiramified graded division algebra
$\sE=\sA(\sE_0\sT/\sT,\bsi,\bf v,\bf c)$ as above, write\break 
${\sE
\sim_{g} \sI \otimes_T \sN}$ with $\sI$ inertial and $\sN$ 
DSR; so
$[\sI_0] \in \Br(\sE_0/\sT_0)$. If $\sI_0 \sim
A(\sE_0/\sT_0,\bsi,\bu,\bb)$, then 
by changing the chioce of 
the $y_i\in \sE^*$ inducing $\sigma_i$ on $\sE_0\sT$ 
we have $\sE = \sA(\sE_0\sT/\sT,\bsi,\bf u,\bf e)$
with the same $\bf u$ as for~$\sI_0$.
\end{lemma}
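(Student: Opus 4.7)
The plan is to present both $\sI$ and $\sN$ as graded abelian crossed products over the common maximal graded subfield $\sE_0\sT$ relative to the prescribed cyclic decomposition $\bsi$, combine them via Remark~\ref{abeliancpprod}, and then transport the resulting generators into $\sE$ itself using graded Skolem-Noether.

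Because $\sI$ is inertial, $\sI=\sI_0\otimes_{\sT_0}\sT$, and base change of the hypothesized ungraded presentation gives $\sI\sim_g\sA(\sE_0\sT/\sT,\bsi,\bu,\bb)$, with the $b_i\in\sE_0^*$ viewed as degree-zero elements. Because $\sN$ is DSR, its graded Brauer class is decomposable; by the cup-product description~\eqref{dec}, which is independent of the choice of cyclic decomposition of $H$, $\sN$ admits a decomposable representative relative to $\bsi$, namely $\sN\sim_g\sA(\sE_0\sT/\sT,\bsi,\bone,\mathbf{c}')$ for some $c_1',\dots,c_k'\in\sT^*$. Applying Remark~\ref{abeliancpprod} (the graded analogue of Lemma~\ref{onen}) to these two presentations then yields
\[
\sE \,\sim_g\, \sI\otimes_\sT\sN \,\sim_g\, \sA(\sE_0\sT/\sT,\bsi,\bu,\mathbf{e}), \qquad e_i := b_ic_i'.
\]

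The right-hand abelian crossed product has $\sT$-dimension $[\sE_0\col\sT_0]^2=\deg(\sE)^2=[\sE\col\sT]$, so since $\sE$ is the graded division algebra in its Brauer class the matrix multiplicity is one, and there is a graded $\sT$-algebra isomorphism $\phi\colon\sA(\sE_0\sT/\sT,\bsi,\bu,\mathbf{e})\to\sE$. Being graded and a $\sT$-algebra map, $\phi$ carries degree zero to degree zero; since by Lemma~\ref{incp} both degree-zero parts equal $\sE_0$, the restriction $\phi|_{\sE_0\sT}$ is some $\tau\in H=\sGal(\sE_0\sT/\sT)$. By graded Skolem-Noether (Prop.~\ref{grSN}) applied inside $\sE$ there is a homogeneous $w\in\sE^*$ with $\intt(w)|_{\sE_0\sT}=\tau^{-1}$; replacing $\phi$ by $\intt(w)\circ\phi$ we may assume $\phi|_{\sE_0\sT}=\id$. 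Setting $y_i':=\phi(y_i)\in\sE^*$ for the abelian crossed product generators $y_i$ of the domain, the identities $\phi|_{\sE_0\sT}=\id$, $u_{ij}\in\sE_0^*$, and $e_i\in(\sE_0\sT)^*$ immediately give that $y_i'$ is homogeneous with $\deg(y_i')=\tfrac{1}{r_i}\deg(e_i)$, $\intt(y_i')|_{\sE_0\sT}=\sigma_i$, $(y_i')^{r_i}=e_i$, and $[y_i',y_j']=u_{ij}$; thus $\sE=\sA(\sE_0\sT/\sT,\bsi,\bu,\mathbf{e})$ with respect to this new choice of generators. The main obstacle is this final adjustment: the isomorphism $\phi$ produced by the dimension count need not fix $\sE_0\sT$ pointwise, and graded Skolem-Noether is what allows us to correct it.
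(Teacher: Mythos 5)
Your proposal is correct, but it reaches the presentation of $\sN$ by a genuinely different route than the paper. The paper works directly with the totally ramified maximal graded subfield $\sJ$ of $\sN$: since $\Gamma_\sN=\Gamma_\sJ$ and $\Theta_\sN$ is an isomorphism (as $\sN$ is semiramified), one can choose $x_i\in\sJ^*$ with $\Theta_\sN(\deg x_i)=\sigma_i$; these $x_i$ automatically commute (they live in the graded field $\sJ$), so $w_{ij}=1$, and $d_i:=x_i^{r_i}$ lands in $\sT^*$. That construction is self-contained and produces the $\bsi$-adapted presentation $\sN\cong_g\sA(\sE_0\sT/\sT,\bsi,\bone,\bold d)$ in one step. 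You instead invoke decomposability of DSR algebras plus the cup-product characterization of $\Dec$ (independence of the choice of cyclic decomposition) to assert $\sN\sim_g\sA(\sE_0\sT/\sT,\bsi,\bone,\mathbf{c}')$ with $c_i'\in\sT^*$. This is correct, but be aware that \eqref{dec} is stated only for (ungraded) fields, and that ``DSR $\Rightarrow$ decomposable with respect to the given $\bsi$'' is asserted in the paper only with a forward reference to the unitary Prop.~\ref{DSRdecomp}; in both cases the graded versions are true and provable by the same arguments, but they are not established verbatim in the text. In effect your step 3 is appealing to the graded nonunitary analogue of Prop.~\ref{DSRdecomp}, whose proof is essentially the $\sJ$-argument the paper runs inline. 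On the other hand, your final dimension count (a graded central simple $\sT$-algebra of $\sT$-dimension $[\sE:\sT]$ that is graded Brauer equivalent to the graded division algebra $\sE$ must be graded isomorphic to $\sE$) is a clean substitute for the paper's appeal to Lemma~\ref{incp} to show $\sE'$ is a graded division algebra. The closing Skolem--Noether correction of $\phi$ so that it fixes $\sE_0\sT$ pointwise is exactly what the paper does.
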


\begin{proof}
Let $\sJ$ be a maximal graded subfield of $\sN$ which is 
totally
ramified over $\sT$, so $\Gamma_\sN=\Gamma_\sJ$. Because 
$\sN$~is
semiramified, the map $\Theta_\sN\colon\Gamma_\sN/\Gamma_\sT
\rightarrow \Gal(\sN_0/\sT_0)$ is an isomorphism. But also
$\sN_0=\sE_0$. Thus, for each $i$, we can choose $x_i \in 
\sJ^*$
with $\Theta_\sN(\deg(x_i))=\sigma_i|_{\sE_0}$. 
Let
$d_i=x_i^{r_i} \in (\sN_0\sT)^*=(\sE_0\sT)^*$. Then,
$\sN\cong_g\sA(\sE_0\sT/\sT,\bsi,\bold w,\bold d)$, where 
each
$w_{ij} =x_ix_jx_i\inv x_j\inv=1$, as all the $x_i$ lie in the graded 
field
$\sJ$. Let $\sI_0'=A(\sE_0/\sT_0,\bsi,\bu,\bold b)$, 
which is
Brauer equivalent to $\sI_0$. Then set
$\sI'=\sI_0'\otimes_{\sT_0}\sT$, which is an inertial 
$\sT$-algebra
with $\sI' \sim_g \sI$. Since $\sI' \otimes_\sT \sN 
\sim_g
\sI\otimes_\sT \sN\sim_g \sE$, we may without any loss 
replace
$\sI$ by $\sI'$. Then, as ${\sI_0\cong
A(\sE_0/\sT_0,\bsi,\bu,\bold b)}$, clearly  
$\sI\cong_g\sI_0\otimes_{\sT_0}\sT
\cong_g\sA(\sE_0\sT/\sT,\bsi,\bu,\bold b)$. 
Let $\sE' = \sA(\sE_0\sT/\sT,\bsi,\bold u,\bold e)$, 
where each $e_i = b_id_i$, and let $y_1', \ldots ,
y_k'$ be the associated generators of $\sE'$ over 
$\sEz\sT$. Then,
$\sE\sim_g\sI \otimes_\sT \sN \sim_g\sE'$, 
by Remark~\ref{abeliancpprod}, as $u_{ij}w_{ij} = u_{ij}$.
 Note that for each~$i$, ${\deg(e_i) = \deg(d_i)}$, as
$\deg(b_i) = 0$. Hence, $\Ga_{\sE'} = \Ga_\sN$ by 
\eqref{gradeinfo}.  Furthermore, $\sE'$ is a semiramified 
graded division algebra since $\sN$ is, because Lemma~\ref{incp} 
shows that this is determined by the $\deg(e_i)$, resp.~
$\deg(d_i)$.  Because  $\sE'$ is a graded division algebra (not just a 
graded simple
algebra), as is $\sE$, from $\sE\sim_g\sE'$  
the
uniqueness in the graded Wedderburn 
Theorem~\cite[Prop.~1.3]{hwcor} yields
a graded $\sT$-isomorphism $\eta\colon 
\sE \to \sE'$.  By the graded Skolem-Noether Theorem, 
Prop.~\ref{grSN},  $\eta$ can be chosen so that $\eta|_{\sEz\sT}
=\id$.  Then replacing the $y_i$ by $\eta\inv(y_i')$ in the 
presentation of $\sE$ changes each~$v_{ij}$ to $u_{ij}$.
\end{proof}

\begin{theorem}\label{goth}
Suppose $\sE$ is a semiramified $\sT$-central graded division
algebra, and take any decomposition 
${\sE\sim_g\sI\otimes_\sT
\sN}$ where $\sI$ is an inertial graded $\sT$-algebra and 
$\sN$ is DSR. Then,
\begin{enumerate}[\upshape (i)]
\item  Since $\sI_0  \in \Br(\sE_0/\sT_0)$ with 
$\sEz$ abelian Galois over $\sTz$,
 we can write
$\sI_0 \sim 
A(\sE_0/\sT_0,\bsi,\bu,\bold b)$ in $\Br(\sTz)$.
 Then,
$$
\SK(\sE) \ \cong \  \widehat{H}^{-1}(H,\sE_0^*)\big / 
\big \langle \im \{ u_{ij} \mid 1\leq i,j \leq k \} 
\big \rangle, \ \text{ where } H = \Gal(\sE_0/\sT_0).
$$

\item  If  
$\sE_0 \cong L_1 \otimes_{\sT_0} L_2$ with each 
$L_i$ cyclic Galois over $\sT_0$, then
$$
\SK(\sE)  \ \cong \  \Br(\sE_0/\sT_0) \big / 
\big [\Dec(\sE_0/\sT_0)\cdot\langle[\sI_0]\rangle\big ],$$
where $\Dec(\sE_0/\sT_0) = \Br(L_1/\sTz) \cdot \Br(L_2/\sTz)$. 
\end{enumerate}
\end{theorem}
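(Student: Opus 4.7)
\smallskip

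\noindent\emph{Plan.} The strategy is to combine Lemma~\ref{morit} with the exact sequence \eqref{hwsemiram} from \cite[Cor.~3.6(iii)]{hazwadsworth}, using the former to align the commutator parameters of a presentation of $\sE$ as a graded abelian crossed product with those of $\sI_0$.

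First I would apply Lemma~\ref{morit} to present $\sE = \sA(\sEz\sT/\sT, \bsi, \bu, \bold e)$ with \emph{the same} commutator parameters $u_{ij} = y_iy_jy_i\inv y_j\inv$ as in the chosen presentation $\sI_0 \sim A(\sEz/\sTz, \bsi, \bu, \bold b)$, where $y_1,\dots,y_k \in \sE^*$ are homogeneous lifts of $\si_1,\dots,\si_k$. Each $u_{ij}$ is a commutator in $\sE^*$, hence $\Nrd_\sE(u_{ij}) = 1$; and since $u_{ij} \in \sEz^* \subseteq (\sEz\sT)^*$ with $\sEz\sT$ a maximal graded subfield of $\sE$, the reduced norm on $\sEz^*$ agrees with $N_{\sEz/\sTz}$. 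Thus $u_{ij} \in \ker(N_{\sEz/\sTz})$ and represents a class in $\wh H\inv(H, \sEz^*)$.

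For (i), I would invoke the exact sequence
$$
H\wedge H \,\lra \, \wh H\inv(H, \sEz^*) \, \lra \,\SK(\sE) \,\lra\, 1,
$$
where the right-hand surjection is induced by representing each class in $\SK(\sE)$ by an element of $\ker(N_{\sEz/\sTz}) \subseteq \sEz^*$ (using the strictly maximal inertial subfield $\sEz\sT$). The crucial content — which must be extracted from \cite[Cor.~3.6(iii)]{hazwadsworth} — is that the left-hand map sends $\si_i \wedge \si_j$ to the class $u_{ij}\cdot I_H(\sEz^*)$, so that its image is $\lan u_{ij}\cdot I_H(\sEz^*)\ran$. Exactness then gives $\SK(\sE) \cong \wh H\inv(H, \sEz^*)/\lan u_{ij}\ran$, which is the formula in (i) after the normalization from Lemma~\ref{morit}.

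For (ii), $H = \lan\si_1\ran\times\lan\si_2\ran$ is bicyclic and $\sEz = L_1\otimes_{\sTz} L_2$, so only $u := u_{12}$ matters (since $u_{11}=u_{22}=1$ and $u_{21}=u\inv$). The isomorphism \eqref{njnj} identifies $\wh H\inv(H, \sEz^*) \cong \Br(\sEz/\sTz)/\Dec(\sEz/\sTz)$, and under this isomorphism the generator $u\cdot I_H(\sEz^*)$ corresponds to $[A(\sEz/\sTz,\bsi,u,\bold b)] = [\sI_0]$ modulo $\Dec$. Quotienting both sides by the class of $u$ yields $\SK(\sE) \cong \Br(\sEz/\sTz)/\big[\Dec(\sEz/\sTz)\cdot \lan[\sI_0]\ran\big]$, and the decomposition $\Dec(\sEz/\sTz) = \Br(L_1/\sTz)\cdot\Br(L_2/\sTz)$ from \eqref{dec} completes (ii). The main obstacle is pinning down the description of the connecting map $H\wedge H \to \wh H\inv(H, \sEz^*)$ as ``commutator of homogeneous lifts''. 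If this is not already explicit in \cite{hazwadsworth}, one must unwind the derivation of \eqref{hwsemiram} from the presentation $\SK(\sE) = \ker(\Nrd_\sE)/[\sE^*,\sE^*]$, showing that the only relations among classes of $\ker(N_{\sEz/\sTz})$ coming from $[\sE^*, \sE^*]$ beyond those already captured by $I_H(\sEz^*)$ (which arise from commutators of the form $[z, y_i]$ with $z\in \sEz^*$) are generated by the commutators $[y_i,y_j] = u_{ij}$.
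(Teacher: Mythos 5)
Your proposal is correct and follows essentially the same route as the paper: apply Lemma~\ref{morit} to normalize the presentation of $\sE$ so that its commutator parameters match those of $\sI_0$, then feed this into the exact sequence $H\wedge H \to \wh H\inv(H,\sEz^*)\to \SK(\sE)\to 1$ with the explicit description $\Phi(\si_i\wedge\si_j)=u_{ij}I_H(\sEz^*)$ (the paper cites the graded version, \cite[Cor.~3.6(ii)]{hazwadsworth}, rather than the Henselian \eqref{hwsemiram} you quote, but they are the same sequence in substance), and finally invoke \eqref{njnj} for the bicyclic case.
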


\begin{proof} The definition of $\SK$ for graded division algebras
is given in \eqref{grsk} above. 
(i) We have\break ${H = \Gal(\sEz/\sTz) \cong \sGal(\sEz \sT/\sT)}$.
Since $\sE$ is semiramified,  
$H\cong \Ga_\sE/\Ga_\sT$ via $\ov \Theta_\sE\inv$ (see~\S 2). 
By \cite[Cor.~3.6(ii)]{hazwadsworth}
 there is an exact sequence
\begin{equation}\label{ppr}
0\ \longrightarrow  \ H  \textstyle\wedge H \ 
\stackrel{\Phi}{\longrightarrow} \ \widehat{H}^{-1}(G,\sE_0^*) \ 
\stackrel{\Psi}{\longrightarrow} \ \SK(\sE) \ \longrightarrow \ 0.
\end{equation}
The maps in~(\ref{ppr}) are given as follows: Let
$\ker(\Nrd_\sE)=\{a\in \sE^* \mid \Nrd_\sE(a) =1 \} 
\subseteq
\sE_0^*$, and let $\ker(N_{\sE_0/\sT_0})=\{a \in \sE_0^* 
\mid
N_{\sE_0/\sT_0}(a) =1 \}$. Because $\sE$ is semiramified, 
by~\cite[Remark~2.1(iii), Lemma~2.2]{I}, 
${\ker(\Nrd_\sE)=\ker(N_{\sE_0/\sT_0})}$. For every $\rho \in  
H$, choose any $y_\rho\in \sE^*$ with $\intt(y_\rho)|_{\sE_0}=\rho$. 
The map
$\Phi$ is given by: for $\rho,\pi \in H$, 
$$
\Phi(\rho\wedge \pi) \ = \ y_\rho y_\pi y_\rho\inv y_\pi \inv I_H(\sE_0^*)  \ \in 
\ker(N_{\sE_0/\sT_0})\big /I_H(\sE_0^*) \ = \ 
\widehat{H}^{-1}(H,\sE_0^*).
$$
The map $\Psi$ is given by: for $a\in \ker(N_{\sE_0/\sT_0})$,
$$
\Psi\big(a\,I_H(\sE_0)^*\big) \ = \ 
a\,[\sE^*,\sE^*]  \ \in \ker(\Nrd_\sE)\big / [\sE^*,\sE^*]
 \ = \ \SK(\sE).
$$

 By Lemma~\ref{morit}, we can assume 
$\sE= \sA(\sE_0\sT/\sT,\bsi, \bu, \bf c)$ (with
the same $u_{ij}$ as  for $\!\sI_0$). Since\break ${H \cong
\sGal(\sE_0\sT/\sT)=
\langle\sigma_1\rangle \times \ldots \times
\langle\sigma_k \rangle}$, we have 
$H\wedge H =\langle \sigma_i
\wedge \sigma_j \mid 1\leq i,j \leq k \rangle$. There are
$y_1,\dots,y_k \in \sE^*$, with 
$\intt(y_i)|_{\sE_0}=\sigma_i$ and 
$y_iy_j y_i\inv y_j\inv=u_{ij}$. So we can take $y_{\sigma_i}=y_i, 
1\leq i \leq
k$, yielding for the $\Phi$ in~(\ref{ppr}), 
$\Phi(\sigma_i\wedge
\sigma_j)=u_{ij}I_H(\sE_0^*)\in \widehat{H}^{-1}(H,\sE_0^*)$. 
Thus,
$\im(\Phi)=\langle \im(u_{ij})\mid 1\leq i,j \leq k \rangle$, 
and
part (i) follows from the exact 
sequence~(\ref{ppr}).

(ii) When $\sE_0=L_1\otimes_{\sT_0}L_2$, $H=\Gal(\sE_0/\sT_0)$ 
has
rank $2$, say $H=\langle\sigma_1\rangle \times 
\langle \sigma_2
\rangle$. So, $H\wedge H=\langle \sigma_1\wedge \sigma_2 
\rangle$
and $\im (\Phi)=\langle u_{12}  I_H(\sE_0^*)\ran$. As we 
saw in
discussion of~(\ref{njnj}) above, the isomorphism 
$$
\Br(\sE_0/\sT_0)\big
/\Dec(\sE_0/\sT_0) \,\longrightarrow \, 
\widehat{H}^{-1}(H,\sE_0^*),
$$
maps $[\,\sI_0]+\Dec(\sE_0/\sT_0)$ to $u_{12}I_H(\sE_0^*)$. 
Thus using part~(i),
\begin{equation*}
\SK(\sE) \ \cong \  \widehat{H}^{-1}(H,\sE_0^*) \big / \big
\langle \im(u_{12}) \big \rangle  \ \cong \   \Br(\sE_0/\sT_0) 
\big / \big
[\Dec(\sE_0/\sT_0)\cdot\langle[\,\sI_0]\rangle\big ].\qedhere
\end{equation*}
\end{proof}

For any division algebra $D$ over a Henselian valued field 
$F$, the valuation on $F$ extends uniquely to a valuation 
on $D$, and we write $\ov D$ for its  residue division algebra
and $\Gamma_D$ for its value group.
Recall the isomorphism $\SK(D) \cong \SK(\gr(D))$ for a tame 
such $D$,
proved in \cite[Th.~4.8]{hazwadsworth}.  By using this isomorphism,
Th.~\ref{goth} yields the following: 

\begin{corollary}\label{henselcor}
 Let $F$ be  field with Henselian 
valuation $v$, and let $D$ be an $F$-central division algebra
which $($with respect to the unique extension of $v$ to 
$D$$)$ is tame and semiramified.  Take any decomposition
$D\sim I\otimes_F N$, where $I$ and $N$ are $F$-central 
division algebras with $I$ inertial and $N$  DSR. 
\begin{enumerate}[\upshape (i)]
\item 
Since $\ov I  \in \Br(\ov D/\ov F)$ with 
$\ov D$ abelian Galois over $\ov F$,
 we can write
$\ov I \sim A(\ov D/\ov F, \bsi, \bu, \bb)$ in 
$\Br(\ov F)$.  Then, 
$$
\SK(D) \ \cong \  \wh{H}^{-1}(H,\ov D^*)\big / 
\big \langle \im \{ u_{ij} \mid 1\leq i,j \leq k \} 
\big \rangle, \  \ \text{ where }  \  H \,=\, \Gal(\ov D/\ov F).
$$

\item   If  
$\ov D \cong L_1 \otimes_{\ov F} L_2$ with each 
$L_i$ cyclic Galois over $\ov F$, then
$$
\SK(D)  \ \cong \  \Br(\ov D/\ov F) \big / 
\big [\Dec(\ov D/\ov F)\cdot\langle[
\ov I]\rangle\big ],$$
where $\Dec(\ov D/\ov F) = \Br(L_1/\ov F) \cdot \Br(L_2/\ov F)$.
\end{enumerate}  
\end{corollary}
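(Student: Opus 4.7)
The plan is to deduce Corollary~\ref{henselcor} from Theorem~\ref{goth} by passing from the Henselian valued setting to the associated graded setting via the isomorphism $\SK(D)\cong\SK(\gr(D))$ of \cite[Th.~4.8]{hazwadsworth}. So my first step is to set $\sT=\gr(F)$, $\sE=\gr(D)$, $\sI=\gr(I)$, and $\sN=\gr(N)$, and verify that these graded objects have all the structural properties required as input to Theorem~\ref{goth}. Since $D$ is tame over the Henselian field $F$, the associated graded ring $\gr(D)$ is a graded division algebra with center $\gr(F)$, $\gr(D)_0 = \ov D$, $\Gamma_{\gr(D)} = \Gamma_D$, and $[\gr(D)\col\gr(F)] = [D\col F]$; likewise for $\gr(I)$ and $\gr(N)$. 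Because $D$ is semiramified with $\ov D$ abelian Galois over $\ov F$, these equalities immediately translate to $\sE$ being semiramified over $\sT$ in the graded sense, with $\Gal(\sE_0/\sT_0) = \Gal(\ov D/\ov F) = H$. Similarly, $I$ inertial over $F$ yields $\sI$ inertial over $\sT$ with $\sI_0 \cong \ov I$, and $N$ DSR over $F$ yields $\sN$ DSR over $\sT$.

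The next step is to confirm that the Brauer equivalence $D\sim I\otimes_F N$ passes to $\sE\sim_g \sI\otimes_\sT\sN$. This is standard: the functor $\gr$ is compatible with tensor products for tame algebras over a Henselian base (see \cite{hwcor}), and it preserves Brauer equivalence of tame algebras, so $\gr(I\otimes_F N)\cong_g \gr(I)\otimes_{\gr(F)}\gr(N)$ up to graded Brauer equivalence, giving the needed decomposition in $\sBr(\sT)$.

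Now I apply Theorem~\ref{goth} to $\sE\sim_g\sI\otimes_\sT\sN$. Part (i) of the theorem writes $\sI_0 \sim A(\sE_0/\sT_0,\bsi,\bu,\bb)$ in $\Br(\sT_0)$; under the identifications $\sI_0\cong\ov I$, $\sE_0\cong\ov D$, $\sT_0\cong\ov F$, this is precisely the hypothesis $\ov I\sim A(\ov D/\ov F,\bsi,\bu,\bb)$ in Corollary~\ref{henselcor}(i). Theorem~\ref{goth}(i) then yields
\[
\SK(\sE) \;\cong\; \wh H^{-1}(H,\sE_0^*)\big/\big\langle\im\{u_{ij}\mid 1\le i,j\le k\}\big\rangle,
\]
and combining with $\SK(D)\cong\SK(\gr(D))=\SK(\sE)$ and the identification $\sE_0^*=\ov D^*$ gives part (i) of the corollary. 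For part (ii), under the additional bicyclic hypothesis on $\ov D$ the corresponding bicyclic hypothesis holds for $\sE_0$, so Theorem~\ref{goth}(ii) applies, and the resulting formula translates verbatim to the stated expression for $\SK(D)$.

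The only real work is the verification step that graded objects with the required properties actually exist and that the $I\otimes N$ decomposition transports correctly; the main potential obstacle is ensuring compatibility of the norm/Dec identifications between $\ov{\phantom{a}}$ and $(\,)_0$, but this is routine from the general yoga relating valued and graded structures for tame algebras over Henselian fields.
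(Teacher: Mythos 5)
Your proposal is correct and follows essentially the same route as the paper: set $\sT=\gr(F)$, $\sE=\gr(D)$, $\sI=\gr(I)$, $\sN=\gr(N)$, verify that these inherit the semiramified/inertial/DSR properties and that the Brauer decomposition passes to $\sBr(\sT)$ (the paper does this via the explicit group isomorphism $\Br_t(F)\cong\sBr(\sT)$ from \cite[Th.~5.3]{hwcor}, rather than arguing directly with $\gr$ of the tensor product), then apply Th.~\ref{goth} and translate back using $\SK(D)\cong\SK(\gr(D))$. The only place you are slightly terser than the paper is in passing the DSR property to $\sN$ — the paper explicitly unpacks that $N$ DSR means $N$ has inertial and totally ramified (of radical type) maximal subfields, whose associated graded fields witness $\sN$ being DSR — but this is a matter of detail, not substance.
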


\begin{proof}  
(That $D$ is tame and semiramified means  
$[\ov D\col \ov F] = |\Ga_D\col \Ga_F| = \sqrt{[D\col F]}$ and 
$\ov D$ is a field separable over $\ov F$.)
Let $\sT = \gr(F)$, the associated graded ring of $F$  with 
respect to the filtration on it induced by the valuation
(cf.~\cite{hwcor} or \cite{hazwadsworth}).   Since $F$ is a field,  
$\sT$ is a graded field with $\sTz = \ov F$ and ${\Gamma_\sT = \Gamma_F}$.  
Since $v$ is Henselian, it has unique extensions
to valuations on $D$, $I$, and $N$; with respect to these valuations, 
let ${\sE = \gr(D)}$, $\sI = \gr(I)$, and $\sN = \gr(N)$.  These are graded
division rings, with $\sEz = \ov D$, ${\sI_0 = \ov I \sim 
A(\sEz/ \sTz, \bsi, \bu, \bb)}$, and $\sN_0 = \ov N \cong \ov D = \sEz$.
Moreover, as $D$, $I$, and $N$ are each tame over~$F$, 
it follows by \cite[Prop.~4.3]{hwcor} that 
$\sT$ is the 
center of $\sE$, $\sI$, and $\sN$, and 
$[\sE\col \sT] = [D\col F]$, $[\,\sI\col \sT] = [I\col F]$, and 
$[\sN\col \sT] = [N\col F]$.
Since $I$ is inertial over $F$, we have $\sI$ is inertial over $\sT$. 
That $N$ is DSR means (cf.~\cite[p.~149]{jw}, where the term NSR is used)
that $N$ has  maximal subfields $S$ and $J$ with $S$ inertial over~$F$ 
and $J$ totally ramified of radical type over $F$.  Then,
$\gr(S)$ and $\gr(J)$ are  maximal graded subfields of 
$\sN$ with $\gr(S)$ inertial over~$\sT$ and $\gr(J)$ totally
ramified over $\sT$.  So, $\sN$ is DSR.  Similarly, $\sE$ is 
semiramified since $D$~is tame and semiramified.  
Let $\Br_t(F)$ be the tame part of the 
Brauer group $\Br(F)$.  From the isomorphism $\Br_t(F) \cong
\sBr(\sT)$ given by \cite[Th.~5.3]{hwcor}, we obtain 
$\sE \sim_g \sI \otimes_\sT \sN$ from $D\sim I\otimes_FN$. 
Thus, Th.~\ref{goth} applies to $\sE$ with the decomposition
$\sE \sim_g \sI \otimes _\sT \sN$, and the assertions of 
Cor.~\ref{henselcor} follow immediately as $\SK(D)
\cong \SK(\sE)$ by \cite[Th.~4.8]{hazwadsworth}.
\end{proof} 

\begin{example}\label{cyclicex} 
Take any integer $n\geq 2$ and let $K$ be any 
field containing a primitive $n^2$-root of unity
$\omega$. Let $\sT=K[x,x^{-1},y,y^{-1}]$, the 
Laurent polynomial 
ring,
graded as usual by $\mathbb Z \times \mathbb Z$ with
$\sT_{(k,\ell )}=Kx^ky^\ell$; in particular, 
$\sTz = K$. (So~$\sT\cong_{g} 
\gr\big(K((x))((y))\big)$
where the iterated Laurent power series ring $K((x))(y))$ is 
given
its usual rank~$2$ Henselian valuation.) Take any 
$a,b\in K^*$ such
that $\big [K(\sqrt[n]{a},\sqrt[n]{b}\,)\col K\big]=n^2$, and let 
$\sE$ be
the graded symbol algebra  $\sE=(ax^n,by^n,\sT)_\omega$, of 
degree $n^2$.
That is, $\sE$ is the graded central simple $\sT$-algebra with
homogenous generators $i$ and $j$ such that $i^{n^2}=ax^{n}$,
$j^{n^2}=by^{n}$, and $ij=\omega ji$, and  
$\deg(i)=(\frac 1n,0)$,
$\deg(j)=(0,\frac 1n)$. Then, $\Gamma_\sE=(\frac 1n\mathbb Z)\times 
(\frac 1n \mathbb Z)$,
and $\sE_0=K(i^nx^{-1},j^ny^{-1})\cong K(\sqrt[n]{a},
\sqrt[n]{b}\,)$.
Since $\sE_0$ is a field, 
by Lemma~\ref{zerosimple}(ii) $\sE$~is a graded division ring, 
which is
clearly semiramified. We can write $\sE_0=L_1
\otimes_K
L_2$ where $L_1=K(\sqrt[n]{a}\,)$ and $L_2=K(\sqrt[n]{b}\,)$, and
$H=\Gal(\sE_0/K)=\langle \sigma_1 \rangle \times 
\langle \sigma_2
\rangle$ where $\sigma_1(\sqrt[n]{a})=\omega^n\sqrt[n]{a}$,
$\sigma_1(\sqrt[n]{b})=\sqrt[n]{b}$ and
$\sigma_2(\sqrt[n]{b})=\omega^n\sqrt[n]{b}$,
$\sigma_2(\sqrt[n]{a})=\sqrt[n]{a}$. 
Since $\intt(j\inv)|_{\sEz} = \si_1$ and $\intt(i)|_{\sEz} = \si_2$, 
we can express $\sE$ as a graded abelian crossed product with 
$y_1 = j\inv$ and $y_2 = i$, 
obtaining $\sE = \sA(\sT(\sqrt[n]{a},\sqrt[n]{b})/\sT, \bsi, \bold u,
\bold d)$, where $u_{11}=u_{22}=1$, $u_{12}=\omega$, $u_{21}=\omega^{-1}$,
and 
$d_1=1/(y\sqrt[n]{b})$,  $d_2 = x\sqrt[n]{a}$. 
Graded symbol algebras 
satisfy
the same multiplicative rules in the graded Brauer group as 
do the
usual ungraded symbol algebras in the Brauer group. 
(This follows, e.g., by the injectivity of the scalar extension map
$\sBr(\sT) \to \Br(q(\sT))$, cf.~\cite[p.~90]{hwcor}.) Thus, in
$\Br(\sT)$, we have
\begin{align*}
\sE \, & \sim_{g}  (a,b,\sT)_\omega \otimes_\sT (x^n,b,\sT)_\omega 
\otimes_\sT (a,y^n,\sT)_\omega \otimes_\sT(x^n,y^n,\sT)_\omega\\
& \sim_{g}(a,b,\sT)_\omega \otimes_\sT (x,b,\sT)_{\omega^n} \otimes
(a,y,\sT)_{\omega^n}.
\end{align*} 
(The last two terms are symbol algebras of degree $n$.) Thus, 
$\sE\sim_{g}\sI \otimes_\sT \sN$
where $\sI=(a,b,\sT)_\omega$ and $\sN=(x,b,\sT)_{\omega^n}\otimes_\sT
(a,y,\sT)_{\omega^n}$. Then, 
$\sI \cong_{g}\sI_0\otimes_{\sT_0}\sT$,
where
$\sI_0=(a,b,\sT_0)_\omega=A(K(\sqrt[n]{a},\sqrt[n]{b}\,)/K,
\bsi,\bu,\bold b)$,
with the same $\bold u$ as for $\sE$ and
$b_1=1/\sqrt[n]{b}$, 
$b_2=\sqrt[n]{a}$. So, $\sI$ is an inertial central simple 
graded
$\sT$-algebra. We have $\sN_0$ is the field
$K(\sqrt[n]{a},\sqrt[n]{b}\,)$, so $\sN$ is a graded division 
algebra by Lemma~\ref{zerosimple}(ii). 
$\sN$ is DSR since it has the inertial maximal graded 
subfield
$\sT(\sqrt[n]{a},\sqrt[n]{b}\,)=\sN_0\sT$ and the totally 
ramified
maximal graded subfield $\sT(\sqrt[n]{x},\sqrt[n]{y}\, )$. 
As a graded abelian crossed product, 
$\sN \cong_g \sA(\sT(\sqrt[n]{a},\sqrt[n]{b})/\sT, \bsi, 
\bold 1, \bold c)$, where $c_1 = 1/y$, $c_2 =x$.
Let
$M=K(\sqrt[n]{a},\sqrt[n]{b}\,)$. By Prop.~\ref{NSRSK},
$$
\SK(\sN) \ \cong \  \widehat{H}^{-1}(H,M^*) \ \cong \ 
 \Br(M/K) \big / \Dec(M/K),
$$
where $H=\Gal(M/K)$  and $\Dec(M/K) = \Br(K(\sqrt[n] a\,)/K)\cdot \Br
(K(\sqrt[n] b\,)/K)$; but, by Th.~\ref{goth},
\begin{equation}\label{cyclicSK1}
\SK(\sE) \ \cong \  \widehat{H}^{-1}(H,M^*)\big / 
\langle \im(\omega) \rangle  \ \cong  \  \Br(M/K)\big/ 
\big [\Dec(M/K)\cdot\langle[(a,b,K)_\omega]\rangle \big ]. 
\end{equation}

This example is the graded version of Platonov's example  
in~\cite{platcyclic} and \cite{plat76}
of  a cyclic algebra with nontrivial $\SK$, where 
$K$ is a suitably chosen global field. (Platonov worked 
with the Henselian valued 
ground field ${K'=K((x))((y))}$ in place of the graded field $\sT=\gr(K')$ 
considered
here.) In~\cite[Th.~2]{plat76} the added term distingushing 
$\SK(\sE)$ from
$\SK(\sN)$ is omitted. This error is corrected in~\cite[p.~536, 
footnote~1]{yy} and in \cite[p.~70]{ershov}, 
giving the first isomorphism of \eqref{cyclicSK1} but not the second. 
\end{example}

%%\vfill\eject

\section{Unitary graded $I\otimes N$ decomposition}\label{unitaryIN}

The goal for  \S\S 4--7 is to give a unitary version of the formulas for
$\SK$ in Prop.~\ref{NSRSK} and Th.~\ref{goth} for semiramified graded 
division algebras with graded unitary involution.  In this section we 
consider abelian crossed products with unitary involution and 
prove  a unitary analogue to the $I\otimes N$ decomposition of 
Prop.~\ref{INdecomp}. 

A {\it unitary involution}  on a central simple algebra $A$ over a 
field $K$ is a ring antiautomorphism $\tau$ of~$A$ such that 
$\tau^2 = \id_A$ and $\tau|_K \ne \id$.  (Such a $\tau$ is also called 
an involution on $A$ of the second kind.)  Let ${F = K^\tau = \{ c\in K\mid
\tau(c) = c\}}$, which is a subfield of $K$ with $[K\col F] = 2$ and 
$K$ Galois over~$F$ with $\Gal(K/F) = \{\tau|_K, \id_K\}$.  Our $\tau$
is also called a {\it unitary $K/F$-involution}. 
The unitary  
$\SK(A,\tau)$ is defined just as for $\SK(D, \tau)$ in \eqref{unitarydef}.
Recall  
(see \cite[Prop.~(17.24)(2)]{kmrt}) that if $\tau'$ is another unitary 
$K/F$-involution
on $A$, then $\SK(A,\tau') = \SK(A,\tau)$.  Thus, we will freely pass 
from one unitary $K/F$-involution on $A$ to another when convenient.

In the unitary setting generalized dihedral Galois groups often arise
where abelian Galois groups appear in the nonunitary setting.  
A group $G$ is said to be {\it generalized dihedral} with respect to a 
subgroup $H$ if $|G\col H| = 2$ and for some $\theta \in G\setminus H$,
$\theta^2 = 1$ and $\theta h \theta\inv = h\inv$ for every $h\in H$.  
Equivalently,  every element of $G\setminus H$ has order $2$.  
See \cite[\S2.4]{I} for some remarks on such groups.  Note that $H$ is 
necessarily abelian.  If $H$ is cyclic, we say that $G$ is {\it dihedral}.
(This includes the trivial cases where $|H| = 1$ or $2$.)  
For fields $F \subseteq K\subseteq M$, we say that $M$ is 
{\it $K/F$-generalized dihedral} if $[M\col F] <\infty$, 
$M$ is Galois over $F$, 
and $G = \Gal(M/F)$ is generalized dihedral with respect to its
subgroup $H = \Gal(M/K)$.

\begin{lemma}\label{unitarycp} Let $F \subseteq K \subseteq M$ be
fields, and suppose $M$ is $K/F$-generalized dihedral.
Let $A$ be a central simple $K$-algebra containing 
$M$ as a strictly maximal subfield. Let $G = 
\Gal(M/F)$ and $H = \Gal(M/K)$, and fix any 
$\theta \in G\setminus H$
$($so $\theta^2 = \id_M$$)$. Then,    
the following conditions are equivalent:\begin{enumerate} 
  \item[{\rm(i)}]
$A$ has a unitary $K/F$-involution.
  \item[{\rm(ii)}]
$A$ has a unitary $K/F$-involution $\tau$ such that 
$\tau|_M = \theta$.
  \item[{\rm(iii)}]
$A \cong A(M/K, \bsi, \bu, \bb)$ where $($in addition to 
conditions 
\eqref{urels} and \eqref{brel}$)$ 
\begin{equation}\label{unitaryconds} 
u_{ij} \cdot \si_i\si_j\theta(u_{ij}) \ = \ 1  \ \ \ 
\text{and}  \  \ \ \ b_i = \theta(b_i)
 \ \ \  \  \text{for all} \ i,j.
\end{equation}
\end{enumerate}
The  $A$   in {\rm (iii)}, has a unitary $K/F$-involution $\tau$ 
with $\tau|_M = \theta$ and $\tau(z_i) = z_i$ for each of the 
standard generators $z_i$ of $A$.  
\end{lemma}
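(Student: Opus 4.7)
My plan is to prove (ii)$\Rightarrow$(i) trivially, (iii)$\Rightarrow$(ii) together with the last assertion by constructing $\tau$ directly on generators, (ii)$\Rightarrow$(iii) by adjusting the generators via Hilbert~90, and (i)$\Rightarrow$(ii)---which I view as the main obstacle---by a Skolem--Noether and Hilbert~90 modification of the given involution.

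For (iii)$\Rightarrow$(ii) and the final assertion, given a presentation $A=A(M/K,\bsi,\bu,\bb)$ with $\bu,\bb$ satisfying the identities in \eqref{unitaryconds}, I define an $F$-linear antiautomorphism $\tau$ of $A$ by requiring $\tau|_M=\theta$ and $\tau(z_i)=z_i$ for the standard generators. To verify that this is well-defined, it suffices to check that it respects the three defining relations of the crossed product. The relation $z_ix=\sigma_i(x)z_i$ for $x\in M$ reduces to the generalized dihedral identity $\sigma_i\theta\sigma_i=\theta$ (from $\theta\sigma_i\theta^{-1}=\sigma_i^{-1}$ and $\theta^2=1$); the commutation relation $z_iz_j=u_{ij}z_jz_i$ is preserved exactly because $u_{ij}\cdot\sigma_i\sigma_j\theta(u_{ij})=1$; and $z_i^{r_i}=b_i$ is preserved because $\theta(b_i)=b_i$. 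Then $\tau^2$ is the identity on $M$ (since $\theta^2=\id_M$) and on every $z_i$, so $\tau^2=\id_A$, and $\tau|_K=\theta|_K\ne\id$ because $\theta\in G\setminus H$.

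For (ii)$\Rightarrow$(iii), fix any crossed product presentation of $A$ with generators $z_i$ chosen by Skolem--Noether. Applying $\tau$ to $z_ixz_i^{-1}=\sigma_i(x)$ for $x\in M$ and using $\theta\sigma_i^{-1}\theta=\sigma_i$ shows $\intt(\tau(z_i))|_M=\sigma_i$, whence $\tau(z_i)=z_im_i$ for some $m_i\in M^*$. Then $\tau^2(z_i)=z_i$ unwinds to $m_i\cdot\sigma_i^{-1}\theta(m_i)=1$. Since $\sigma_i^{-1}\theta\in G\setminus H$ has order~$2$, Hilbert~90 for the cyclic group $\langle\sigma_i^{-1}\theta\rangle$ acting on $M^*$ yields $m_i=c_i/\sigma_i^{-1}\theta(c_i)$ for some $c_i\in M^*$. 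Replacing $z_i$ by $z_i':=z_ic_i$ then gives new generators with $\tau(z_i')=z_i'$ (and still $\intt(z_i')|_M=\sigma_i$, since $c_i$ centralizes $M$). A direct computation applying $\tau$ to $u_{ij}'=z_i'z_j'(z_i')^{-1}(z_j')^{-1}$ and to $b_i'=(z_i')^{r_i}$ then yields the identities in \eqref{unitaryconds}.

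The key step is (i)$\Rightarrow$(ii). Given any unitary $K/F$-involution $\tau_0$ of $A$, I form a modification $\tau_1:=\intt(w)\circ\tau_0$ for a carefully chosen $w\in A^*$. The map $\phi:=\theta\circ(\tau_0|_M)^{-1}\colon \tau_0(M)\to M$ is a $K$-algebra isomorphism (since $\theta|_K=\tau_0|_K$, the two restrictions compose to $\id_K$), so by Skolem--Noether over $K$ there exists $w_0\in A^*$ with $\intt(w_0)|_{\tau_0(M)}=\phi$; the provisional map $\tau_1:=\intt(w_0)\circ\tau_0$ then satisfies $\tau_1|_M=\theta$. The standard computation gives $\tau_1^2=\intt(c_0)$ where $c_0:=w_0\tau_0(w_0)^{-1}$, and since $\tau_1^2|_M=\id$ we have $c_0\in C_A(M)=M^*$. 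Applying $\tau_1$ to $c_0$ yields $\theta(c_0)=c_0^{-1}$, i.e., $c_0$ has norm~$1$ under the order-$2$ group $\langle\theta\rangle$. Hilbert~90 writes $c_0=\beta/\theta(\beta)$ for some $\beta\in M^*$, and then $w:=\beta^{-1}w_0$ satisfies $w\tau_0(w)^{-1}=1$ while $\intt(w)|_{\tau_0(M)}=\phi$ is unchanged (since $\beta^{-1}\in M$ centralizes $M$). Hence the new $\tau:=\intt(w)\circ\tau_0$ is an honest unitary $K/F$-involution with $\tau|_M=\theta$.
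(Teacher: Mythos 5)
Your proof is correct and follows essentially the same strategy as the paper's: (i)$\Rightarrow$(ii) and (ii)$\Rightarrow$(iii) both rely on the same Skolem--Noether plus Hilbert~90 adjustment (your framing via the isomorphism $\tau_0(M)\to M$ in place of the paper's embedding $\rho\theta\colon M\to A$ is a cosmetic variant, and the computation $\theta(c_0)=c_0^{-1}$ is correct), while (iii)$\Rightarrow$(ii) defines $\tau$ on generators and checks the reversed defining relations, which is the idea the paper then spells out in detail via iterated twisted polynomial rings.
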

\begin{proof} 
Note that as $\theta\notin H$ and $K$ is Galois over $F$, we 
have $\theta(K) = K$ and
$\Gal(K/F) = \{ \id_K, \theta|_K\}$. 

(i) $\Rightarrow$ (ii)   This is a special case of a 
substantial result  
\cite[Th.~4.14]{kmrt} on simple subalgebras with compatible 
involutions.  For the 
convenience of the reader we give a short direct proof.
Let $\rho$ be a unitary $K/F$-involution on~ $A$, so  
$\rho |_K=\theta|_K$. 
Since $\rho\theta$ is a $K$-linear homomorphism 
$M\rightarrow A$, by the 
Skolem-Noether 
Theorem, there is $y \in A^*$ with $\intt(y)|_M = \rho\theta$.    
For any $a\in M$, as $\rho^2 = \theta^2 = \id|_M$, 
we have
$$
\rho(y) a\rho(y)\inv \ = \ \rho(y\inv \rho(a) y) \ = 
\ \rho(\rho \theta)\inv \rho(a) \ =  \ \rho\theta(a)
 \ = \ yay\inv.
$$
Therefore,  letting  $c = y\inv \rho(y)$, we have $c\in
 C_A(M)^* = M^*$ and 
$\rho(y) = yc$.  Hence,   
$$
y  \, =  \, \rho^2(y)  \, = \,  \rho(yc) \,  = \,  \rho(c) yc 
 \, = \,  \rho(c) \rho\theta(c) y \, = \, \rho(c\hsp\theta(c))y;
$$
so, $c\hsp\theta(c) = 1$.  Since $\theta^2 = \id|_M$, by Hilbert 90 
applied to the 
quadratic extension  $M/M^\theta$ there is $d\in M^*$ with 
$c = d\hsp \theta(d)\inv$. 
Let $z = yd$.  Then, as $\theta(c) = \theta(d)d\inv$,
$$
\rho(z) \, =  \, \rho(d)yc \,  = \,  
\rho(d)\rho\theta(c)\, y
 \, = \,  \rho\theta(d)\, y \,  = \,  yd  \, = \,  z.
$$
Let $\tau = \rho\circ\intt(z)$, which is an involution on $A$, as 
$\rho(z) = z$.
Then, $\tau|_M = \rho\intt(z)|_M = \rho\intt(y)|_M = \rho^2
\theta = \theta$, as desired. 

(ii) $\Rightarrow$ (iii) Let $\tau$ be a unitary 
$K/F$-involution on $A$ such that 
$\tau|_M =\theta$. For any $\sigma \in H$, we claim that 
there is 
$z\in A^*$ with $\intt(z)|_M = \si$ and $\tau(z) = z$.  
For this, first apply 
Skolem-Noether to obtain $y\in A^*$ with $\intt(y)|_M = \si$.  
For any $a\in M$ we have,
as $\tau \si \inv \tau = \si$ on $M$ since  $\tau\sigma\inv|_M \in G \setminus H$,
$$
\tau(y) a \tau(y)\inv \, = \ \tau(y\inv \tau(a) y) \, = 
\ \tau\si\inv\tau(a) \, = \ \si(a) \, = \ yay\inv.
$$ 
Hence, $\tau(y) = cy$, where $c \in C_A(M)^* = M^*$. Now,
$$
y \, = \, \tau^2(y)  \ =\  \tau(cy) \ = \ \tau(y) \tau(c)  
\ =  \ 
cy\theta(c)  \ = \  c\hsp\si \theta(c)\, y, 
$$
so $c\,\si\theta(c) = 1$.  Since $\si\theta$ has order $2$, 
Hilbert 90 
applied to the quadratic extension $M/M^{\si\theta}$ shows 
that there is 
$d\in M^*$ with $c = d\,\si\theta(d)\inv$.  Let $z = dy$.  Then, 
$\intt(z)|_M = \intt(y)|_M = \si$ and 
$$
\tau(z)  \, = \,  cy\theta(d)  \, = \, [d\,\si\theta(d)\inv]\,
\si\theta(d) \,y \, =
\, z,
$$
proving the claim.  Thus, with  our cyclic decomposition 
$H = \langle \si_1\rangle \times\ldots \times \langle \si_k
\rangle$,
we can choose $z_1, \ldots, z_k\in A^*$ with $\intt(z_i)|_M = 
\si_i$
and $\tau(z_i) = z_i$.  Then, for $b_i = z_i^{r_i} \in M^*$, 
we have 
$\theta(b_i) = \tau(b_i) = \tau(z_i^{r_i}) = b_i$.  Also, for 
$u_{ij} = z_iz_jz_i\inv z_j\inv$, we have 
$$
\si_i\si_j\theta(u_{ij})  \, = \ z_iz_j\,
\tau(z_i z_j 
z_i\inv z_j\inv)z_j\inv z_i\inv \, =  
\ z_iz_j(z_j\inv z_i\inv 
z_jz_i)z_j\inv z_i\inv
 \, = \ z_jz_iz_j\inv z_i\inv \, = \ u_{ij}\inv,
$$ 
so $u_{ij}\,\si_i\si_j\theta(u_{ij}) = 1$.  Thus, $A\cong 
A(M/K, \bsi, \bu, \bold b)$
with the $\uij$ and $b_i$ satisfying the equations in 
\eqref{unitaryconds}. 

(iii) $\Rightarrow$ (i)  Assume $A = A(M/K, \bsi, \bu, \bold b)$ 
where the 
$\uij$ and $b_i$ satisfy the conditions in 
\eqref{unitaryconds}.  Take 
$z_1, \ldots , z_k\in A^*$ with $\intt(z_i)|_M = \si$, 
$z_i^{r_i} = b_i$ and 
$z_iz_jz_i\inv z_j\inv = \uij$.  We show that there is a  unitary 
$K/F$-involution 
$\tau$ on $A$ satisfying (and determined by) 
$\tau|_M = \theta$ and 
$\tau(z_i) = z_i$ for each $i$.  Basically, this is a matter 
of checking that the $\tau$ just described is compatible 
with the defining 
relations of $A$.  Here is a more complete argument, based 
on the 
description of $A(M/K, \si_i, \uij, b_i)$ given in the proof of
\cite[Th.~1.3]{as}.    
First, take any ring $B$ with an automorphism $\si$, 
 and let $B[y;\si]$ be the twisted polynomial ring 
$\{\sum c_iy^i\,|\ c_i\in B\}$ with the multiplication 
determined by 
$yc = \si(c) y$ for all $c\in B$. It is easy to check that 
an involution 
$\rho$ on $B$ extends to an involution $\rho'$ on $B[y;\si]$ 
with 
$\rho'(y) = y$ iff $\si\rho\si = \rho$.  Also, for 
$d\in B^*$, an 
automorphism $\eta$ of $B$ extends to an automorphism 
$\eta'$ of 
$B[y;\si]$ with $\eta'(y) = dy$ iff $\intt(d) \si\eta = 
\eta \si$.  
Here, let $B_0 = M$, 
$B_1 = B_0[y_1;\si_1^*]$, \ldots, $B_\ell = 
B_{\ell-1}[y_\ell;\si_\ell^*]$,
\ldots, $B_k = B_{k-1}[y_k;\si_k^*]$, where $\si_1^* = 
\si_1$ and for~$\ell>1$,
the automorphism 
$\si_\ell^*$ of $B_{\ell-1}$ is defined by $\si_\ell^*|_M = 
\si_\ell$ and $\si_\ell^*(y_i)
= u_{\ell i} y_i$ for $1\le i<\ell$.  (One checks inductively 
using the identities in  \eqref{urels} that
for $1\le i \le \ell-1$,   
$\si_\ell^*$
satisfies $\intt(u_{\ell i})\si_i^*\si_\ell^* = \si_\ell^* 
\si_i^*$ on~$B_{i-1}$, hence
 $\si_\ell^*$ extends from $B_{i-1}$ to $B_i$; 
 thus, $\si_\ell^*$ is an automorphism of $B_{\ell-1}$.)    
Define inductively involutions
$\tau_i$ on $B_i$ by $\tau_0 = \theta$ and for $\ell >0$, 
$\tau_\ell |_{B_{\ell-1}}
= \tau_{\ell-1}$ and $\tau_\ell(y_\ell) = y_\ell$. Given 
$\tau_{\ell-1}$, the 
condition for the existence of~$\tau_\ell$ is that 
$\si_\ell^* \tau_{\ell-1}\si_\ell^* = \tau_{\ell-1}$.  
For this, note first that 
$\si_\ell^* \tau_{\ell-1}\si_\ell^*|_M = \si_\ell 
\theta \si_\ell = \theta =
\tau_{\ell-1}|_M$ as $G$ is generalized dihedral.  
Furthermore, for $1\le i<\ell$, 
$$
\si_\ell^* \tau_{\ell-1}\si_\ell^*(y_i)  \, = \ 
\si_\ell^*\tau_{\ell-1}(u_{\ell i} y_i)
 \, =  \ \si_\ell^*\big(y_i\, \theta(u_{\ell i})\big)  \, = \ 
\si_\ell^*[\si_i\theta(u_{\ell i})\, y_i] 
 \, = \,  [\si_\ell\si_i\theta(u_{\ell i})] u_{\ell i}\, y_i \, =  \ y_i   
 \, = \ \tau_{\ell-1}(y_i).
$$
Thus, $\si_\ell^* \tau_{\ell-1}\si_\ell^*$ agrees with 
$\tau_{\ell-1}$
throughout $B_{\ell-1}$, as needed.  By induction, we have 
the involution 
$\tau_k$ on $B_k$.  As pointed out in \cite[p.~79]{as}, 
$A\cong B_k/I$, 
where $I$ is the two-sided ideal of $B_k$ generated by 
$\{ y_i^{r_i}-b_i \, |\ 1\le i\le k\}$.  Since 
$\tau_k(b_i) = \theta(b_i) = b_i$, 
$\tau_k$ maps each generator of $I$ to itself. Therefore, 
$\tau_k$ induces an involution 
$\tau$ on $A \cong B_k/I$ which clearly restricts to 
$\theta$ on $M$; so 
$\tau$ is a unitary $K/F$-involution on $A$.   
\end{proof}

We write $\Br(M/K;F)$ for the subgroup of $\Br(M/K)$ of 
algebra classes $[A]$
such that $A$ has a unitary $K/F$-involution.  By Albert's 
theorem 
\cite[Th.~3.1(2)]{kmrt}, $\Br(M/K;F)$ is the kernel of the 
corestriction map
$\text{cor}_{K\to F}\colon \Br(M/K) \to \Br(M/F)$.  
For $M$ a $K/F$-generalized 
dihedral extension of $F$, as above, there is 
in addition a 
corresponding subgroup of 
$\Dec(M/K)$.  For this, note first that for any field $L$ 
with 
$K \subseteq L\subseteq M$ and $L$ cyclic 
Galois over $K$, say $\Gal(L/K) = \langle \si \rangle$, 
$L$ is $K/F$-dihedral,
so Lemma~\ref{unitarycp} (with $k = 1$) implies that
$\Br(L/K;F) = \{[(L/K, \si, b)]\mid b\in F^*\}$.  For 
$H = \Gal(M/K)
= \langle \si_1\rangle \times \ldots \times\langle 
\si_k \rangle$ and  $(\chi_1, \ldots, \chi_k)$ the base
of $X(M/K)$ dual to $(\si_1, \ldots, \si_k)$, and $L_i$ the
fixed field of $\ker(\chi_i)$, 
as at the beginning of \S\ref{abcp}, define 
\begin{equation}
\Dec(M/K;F) \ = \ \{ [(L_1/K, \si_1, b_1)\otimes_K\ldots 
\otimes _K
(L_k/K,\si_k, b_k)]\mid \text {each } b_i\in F^*\} \ 
\subseteq  \ \Br(M/K;F)\}.
\end{equation}
Note that $\Dec(M/K;F)$ is generated as a group by the 
image under the cup 
product of $H^2(H,\zz)\times  F^*$.  Thus $\Dec(M/K;F)$ is 
independent of 
the choice of cyclic decomposition of $H$, and we have
analogously to~\eqref{dec}, 
\begin{equation}\label{Decformula}
\Dec(M/K;F) \ =  \ \textstyle\prod\limits _{i = 1}^k 
\Br(L_i/K;F)
 \ = \ \prod \limits_
{\substack{K \subseteq L \subseteq M \\ \Gal(L/K) 
\text{ cyclic}}}  
\Br(L/K;F). 
\end{equation}

For the rest of this section we fix a graded field 
$\sT$ and a graded subfield $\sR\subseteq \sT$
such that  $[\sT\col\sR] =2$ and $\sT$ is inertial and 
Galois over $\sR$.  Let $\psi$ be the nonidentity graded 
$\sR$-automorphism of $\sT$, and let~$\psi_0$~be the 
restriction $\psi|_{\sT_0}$.
Thus, $\Gamma_\sT = \Gamma_\sR$, 
$[\sT_0\col\sR_0]= 2$, $\sT \cong_g \sT_0\otimes_{\sR_0} \sR$,
$\sT_0$ is Galois over $\sR_0$, and $\psi$ on $\sT$ corresponds 
to  $\psi_0 \otimes \id_\sR$ on $ \sT_0
\otimes_{\sR_0} \sR$.  We are interested in central simple 
graded $\sT$-algebras $\sA$ with graded unitary 
$\sT/\sR$-involutions $\tau$.  This means that $\tau$ is a 
degree-preserving ring antiautomorphism of $\sA$ with 
$\tau^2 = \id_\sA$ and the ring of invariants $\sT^\tau=
\sR$; the last condition is equivalent to $\tau|_\sT = \psi$. Suppose now 
that $\sA$ is a graded division algebra.  Set $\tau_0
= \tau|_{\sA_0}$, which is a unitary involution on 
$\sA_0$, as $\tau_0|_{\sT_0} = \psi_0 \ne \id$ and 
$\sT_0 \subseteq Z(\sA_0)$.  Just as for any graded division 
algebra, $Z(\sA_0)$ is abelian Galois over $\sT_0$.  But the 
presence of the involution~$\tau$ implies further that 
$Z(\sA_0)$ is actually $\sT_0/\sR_0$-generalized dihedral,
by \cite[Lemma~4.6(ii)]{I}.                                   

A central graded division algebra $\sN$ over $\sT$ is said to
be {\it decomposably semiramified for $\sT/\sR$} 
(abbreviated DSR for $\sT/\sR$) if $\sN$ has a 
unitary graded $\sT/\sR$-involution $\tau$ and a
maximal graded subfield $\sM$ inertial over~$\sT$ and another 
maximal graded subfield $\sJ$  with $\sJ$  totally ramified 
over~$\sT$ and $\tau(\sJ) = \sJ$.  When this occurs, 
$\sN$~is semiramified with $\sN_0 = \sM_0$, a field, 
which as just noted is $\sT_0/\sR_0$-generalized dihedral.
Also, $\Gamma_\sN = \Gamma_\sJ$ and $\Theta_\sN$ induces an
isomorphism $\Gamma_\sN/\Gamma_\sT \cong \Gal(\sN_0/
\sT_0)$. Furthermore, as $\sM = \sM_0\sT = \sN_0\sT$, we have
$\tau(\sM) = \sM$.

\begin{example}\label{DSRex}
Let $L$ be any cyclic Galois field extension of $\sTz$ with 
$L$ dihedral over $\sRz$.  (That is, $L$ is Galois over $\sRz$
and there is $\theta \in \Gal(L/\sRz)\mi\Gal(L/\sTz)$ with 
$\theta^2 = \id_\sL$ and
$\theta h\theta\inv = h\inv$ for every $h\in \Gal(L/\sTz)$.
Thus, the group $\Gal(L/\sRz)$ is either dihedral or isomorphic to 
$\zz/2\zz$ or $\zz/2\zz\times \zz/2\zz$.)  Let $r = 
[L\col\sTz]$, and take any $b\in \sR^*$ with the image 
of $\deg(b)$ having order $r$ in $\Ga_\sT/r\Ga_\sT$.  Take 
any generator $\si$ of $\Gal(L/\sTz)$, and let 
$\si$ denote also its canonical extension $\si\otimes \id_\sT$
 in $\sGal((L\otimes _\sTz \sT)/\sT)$.  Let 
$$
\sN \, = \,((L\otimes _\sTz \!\sT)/\sT, \si, b), 
\text{ a cyclic graded algebra over $\sT$}.
$$ 
We show that  $\sN$ is a central graded division algebra over $\sT$ 
of degree $r$, and $\sN$ is DSR for $\sTR$. For, letting
$L\sT$ denote $L\otimes _\sTz \!\sT$, note that $L\sT$ is a 
graded field which is inertial over $\sT$  and is Galois over 
$\sT$ with ${\sGal(L\sT/\sT) = \langle \si\rangle}$.  Our $\sN$
is $\bigoplus_{i=0}^{r-1}L\sT z^i$, where $zcz\inv = 
\si(c)$ for all $c\in L\sT$, and $z^r = b$, with the grading 
on~$\sN$ extending that on $L\sT$ by setting $\deg(z)=
\frac 1 r \deg(b)$.  A graded cyclic $\sT$-algebra is 
always graded simple with center~$\sT$. 
Note that for $j\in \zz$, 
if $j\deg(b)/r\in \Gamma_{L\sT} = \Gamma_\sT$,
 then $j\deg(b) \in r\Gamma_\sT$,
so by hypothesis $r\,|\, j$. 
Hence,
$$
\sN_0 \,=\,\textstyle \sum\limits_{i=0}^{r-1}(L\sT)_{-i\deg(b)/r}\,z^i 
\ =\, 
(L\sT)_0 \,=\, L.
$$ Since $\sN_0$~is a division ring, the 
simple graded algebra~$N$~is a graded division ring,
by Lemma~\ref{zerosimple}(ii).   
 Also, as
${[L\sT\col \sT] = [L\col \sTz] = r = \deg(\sN)}$,
$L\sT$~is a maximal graded subfield of 
$\sN$ which is inertial over $\sT$.  Take any 
$\theta\in \Gal(L/\sRz)$ with ${\theta|_\sTz = \psi_0}$, 
and let $\theta$ denote    
also its canonical extension $\theta \otimes \id|_\sR$
to $\sGal(L\sT/\sR)$.  Define a map $\tau\colon \sN\to \sN$
by 
$$
{\tau(\textstyle\sum\limits_{i=0}^{r-1} c_i z^i) \,
=\, \sum\limits_{i=0}^{r-1}  z^i\theta (c_i) \,=\, 
\sum\limits_{i=0}^{r-1} \si^i \theta(c_i) z^i}.
$$  Since $\theta|_\sT = \psi$,
$\theta^2 = \id$, 
and $\theta \si \theta \inv = \si\inv$ (as $L$ is 
$\sTz/\sRz$-dihedral), it is easy to check 
that $\tau$ is a graded $\sTR$-involution of $\sN$.  Moreover, 
if we let $\sJ = \bigoplus_{i=0}^{r-1} \sT z^i= \sT[z]$, then
$\sJ$ is a maximal graded subfield of $\sN$, and the hypothesis
on $\deg(b)$ assures that $\sJ$ is totally ramified over $\sT$;
also $\tau(\sJ) = \sJ$.  This verifies that $\sN$~
is DSR for $\sTR$.  Note that $\sN_0 = L$ and
$\Ga_\sN = \langle \frac 1r \deg(b)\rangle + \Ga_\sT$.  
\end{example}

\begin{lemma}\label{DSRprod}
Let $\sN$ and $\sN'$ be graded division algebras which 
are each  DSR for $\sT/\sR$.  Suppose  
 $\sN_0$ and $\sN'_0$ are linearly disjoint over 
$\sT_0$ and $\Gamma_\sN \cap\Gamma_{\sN'} = \Gamma_\sT$.
Then, $\sN\otimes _\sT \sN'$ is a graded division algebra
which is DSR for~$\sTR$.  Also, $(\sN\otimes_\sT\sN')_0
\cong \sN_0 \otimes _\sTz \sN_0'$ and 
$\Ga_{\sN\otimes _\sT\sN'} = \Ga_\sN + \Ga_{\sN'}$.
\end{lemma}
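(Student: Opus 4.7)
Set $\sB = \sN \otimes_\sT \sN'$; the plan is to compute $\sB_0$ and $\Ga_\sB$, invoke Lemma~\ref{zerosimple}(ii) to conclude $\sB$ is a graded division algebra, and then check that the given DSR data on $\sN$ and $\sN'$ tensor to DSR data for $\sTR$ on $\sB$. By the convention for graded tensor products recalled in \S\ref{graded}, $\sB_0 = \sum_{\de \in \Ga}\sN_\de \otimes_{\sTz}\sN'_{-\de}$, and the hypothesis $\Ga_\sN \cap \Ga_{\sN'} = \Ga_\sT$ leaves only $\de \in \Ga_\sT$ contributing. For such $\de$, picking $t \in \sT_\de^*$ gives $\sN_\de = t\sN_0$ and $\sN'_{-\de} = t^{-1}\sN'_0$, and the $\sT$-balance of the tensor product collapses all of these summands onto one copy of $\sN_0 \otimes_{\sTz}\sN'_0$. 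Because $\sN$ and $\sN'$ are DSR for $\sTR$, each of $\sN_0, \sN'_0$ is a field which (as noted before the statement) is $\sTz/\sRz$-generalized dihedral, hence Galois and in particular separable over $\sTz$; linear disjointness then forces $\sN_0 \otimes_{\sTz}\sN'_0$, and therefore $\sB_0$, to be a field.

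Since $\sB$ is central simple graded over $\sT$ with $\sB_0$ a division ring, Lemma~\ref{zerosimple}(ii) gives that $\sB$ is a graded division algebra. Using \eqref{fun} applied to $\sB$ together with $[\sB_0\col\sTz] = [\sN_0\col\sTz][\sN'_0\col\sTz] = \deg(\sN)\deg(\sN')$ (as $\sN, \sN'$ are semiramified) and $[\sB\col\sT]=\deg(\sN)^2\deg(\sN')^2$, one obtains $|\Ga_\sB\col\Ga_\sT| = |\Ga_\sN\col\Ga_\sT|\,|\Ga_{\sN'}\col\Ga_\sT|$. On the other hand, $\Ga_\sN \cap \Ga_{\sN'} = \Ga_\sT$ yields $|(\Ga_\sN + \Ga_{\sN'})/\Ga_\sT| = |\Ga_\sN/\Ga_\sT|\,|\Ga_{\sN'}/\Ga_\sT|$, so the obvious inclusion $\Ga_\sN + \Ga_{\sN'} \subseteq \Ga_\sB$ must be an equality.

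For the DSR-for-$\sTR$ structure, let $\tau,\tau'$ be the given involutions and $\sM,\sJ$ and $\sM',\sJ'$ the given inertial and totally ramified maximal graded subfields of $\sN,\sN'$. Because $\tau|_\sT = \tau'|_\sT = \psi$, the map $\tau \otimes \tau'$ is well-defined on the tensor product over $\sT$, and it is immediately a graded unitary $\sTR$-involution on $\sB$. The subalgebra $\sM \otimes_\sT \sM' \cong \sN_0 \otimes_{\sTz}\sN'_0 \otimes_{\sTz} \sT$ is a commutative graded $\sT$-subalgebra with zero component $\sB_0$, of $\sT$-dimension $\deg(\sN)\deg(\sN') = \deg(\sB)$, hence a maximal graded subfield inertial over $\sT$. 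Likewise $\sJ \otimes_\sT \sJ'$, embedded in $\sB$ via the freeness of $\sN,\sN'$ over $\sJ,\sJ'$, is commutative and therefore zero-divisor-free inside the graded division ring $\sB$; its zero component collapses (by the same calculation as for $\sB_0$) to $\sTz$, while its grade set equals $\Ga_\sJ + \Ga_{\sJ'} = \Ga_\sN + \Ga_{\sN'} = \Ga_\sB$, so it is a maximal graded subfield totally ramified over $\sT$, obviously stable under $\tau \otimes \tau'$. The main bookkeeping obstacle is the careful identification of the zero components $\sB_0$ and $(\sJ \otimes_\sT \sJ')_0$: one must see that the a~priori direct sum over $\de \in \Ga_\sT$ in the graded-tensor formula collapses to a single copy via the $\sT$-balance. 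Once this is in hand, the rest of the proof is dimension-counting together with routine tensoring of the given DSR data.
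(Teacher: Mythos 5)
Your proof is correct and follows essentially the same route as the paper's: compute $\sB_0$ by collapsing the graded-tensor sum to a single copy of $\sN_0\otimes_{\sT_0}\sN_0'$ using $\Ga_\sN\cap\Ga_{\sN'}=\Ga_\sT$, invoke Lemma~\ref{zerosimple}(ii), and then tensor the inertial and totally-ramified maximal graded subfields together with the two involutions. The only cosmetic difference is that you derive $\Ga_\sB=\Ga_\sN+\Ga_{\sN'}$ via the Fundamental Equality \eqref{fun}, whereas this is immediate from the general fact (recalled in \S\ref{graded}) that $\Ga_{\sA\otimes_\sT\sB}=\Ga_\sA+\Ga_\sB$ for any graded tensor product; likewise, linear disjointness alone already forces $\sN_0\otimes_{\sT_0}\sN_0'$ to be a field (as a finite-dimensional commutative domain over $\sT_0$), so the appeal to separability is harmless but unnecessary.
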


\begin{proof} 
Let $\sB = \sN\otimes_\sT \sN'$, which is a 
central simple graded $\sT$-algebra, since this is true 
for $\sN$ and $\sN'$ by \cite[Prop.~1.1]{hwcor}.  
For each $\gamma\in \Ga_\sT$ choose a 
nonzero $t_\gamma\in \sT_\gamma$.
Then, 
$$
\sB_0  \ = \textstyle\sum \limits_{\gamma\in \Ga_\sN\cap 
\Ga_{\sN'}}\sN_\ga\otimes 
_\sTz\! \sN'_{-\ga} \ = \ \sum\limits_{\ga\in \Ga_\sT}
\sN_0 \,t_\ga\otimes _\sTz \!\sN'_0\,t_\ga\inv \ = 
\ \sN_0 \otimes_\sTz\!\sN'_0.
$$
The linear disjointness hypothesis assures that 
$\sB_0$ is a field, and hence $\sB$ is a 
graded division ring, by Lemma~\ref{zerosimple}(ii).
Moreover, by dimension count $\sB_0 \sT$
is a graded maximal subfield of $\sB$ which is inertial over~ 
$\sT$. Let $\tau$ be a graded $\sTR$-involution of
$\sN$, and let $\sJ$ be a graded maximal subfield of
$\sN$ with $\tau(\sJ) = \sJ$.  Take $\tau'$ and $\sJ'$ 
correspondingly for $\sN'$.  Then, 
$\sJ\sJ' = \sJ\otimes_\sT\sJ'$ and $\tau \otimes \tau'$
is a graded $\sTR$-involution on $\sB$ with
$(\tau\otimes \tau')(\sJ\sJ') = \sJ\sJ'$.  Moreover, 
$\sJ\sJ'$ is a maximal graded subfield of $\sB$ by dimension count,
and, as $\Ga_\sJ\cap \Ga_{\sJ'} = \Ga_\sN\cap \Ga_{\sN'} =
\Ga_\sT$, we have 
$$
|\Ga_{\sJ\sJ'}\col \Ga_\sT|
\, \ge\, |\Ga_{\sJ} +\Ga_{\sJ'}\col\Ga_\sT|\,  = \, |\Ga_\sJ\col\Ga_\sT|
\cdot |\Ga_{\sJ'}\col \Ga_\sT|\, =\,  [\sJ\col\sT]\cdot [\sJ'\col\sT]
\, =\,  [\sJ\sJ'\col\sT]. 
$$  
Hence, $\sJ\sJ'$ is totally
ramified over~$\sT$.  Thus, $\sB$ is DSR for $\sTR$.  
\end{proof}

The next proposition shows that all graded division algebras
$\sN$
which are DSR for $\sTR$ are obtainable from those in 
Ex.~\ref{DSRex} by iterated application of Prop.~\ref{DSRprod}.
This justifies the term \lq\lq decomposably semiramified" for such $\sN$.

\begin{proposition}\label{DSRdecomp} 
Let $\sN$ be a graded division algebra which is DSR for 
$\sTR$.  Take any decomposition ${\sN_0 = L_1 \otimes _\sTz
\ldots \otimes _\sTz\!L_k}$ with each 
$L_i$ cyclic Galois over $\sTz$, and choose correspondingly\break 
${\si_1 \ldots, \si_k\in \sGal(\sN_0\sT/\sT) \cong
\Gal(\sN_0/\sTz)}$ such that $\si_i|_{L_j}
= \id$ whenever $j\ne i$ and $\Gal(L_i/\sTz) = \langle 
\si_i|_{L_i}\rangle$ for each $i$.  $($So ${\sGal(\sN_0\sT/\sT)=
\lan \si_1\ran \times \ldots \times \lan \si_k\ran}$.$)$
Let $r_i$ be the order of $\si_i$.  
For each $i$ choose $\ga_i\in \Ga_\sN$ with $\Theta_\sN(\ga_i)
= \si_i$.  Then, there exist $b_1,\ldots, b_k \in \sR^*$ 
such that $\deg(b_i) = r_i \gamma_i$ and 
$$
\sN \ \cong_g (L_1\sT/\sT, \si_1, b_1) \otimes _\sT
\ldots \otimes _\sT (L_k\sT/\sT, \si_k, b_k)
\ \cong_g \ \sA(\sN_0\sT/\sT, \bsi,\boldsymbol 1, \bb).
$$
\end{proposition}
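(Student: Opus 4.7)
The plan is to exploit the DSR structure of $\sN$ for $\sTR$ to find commuting homogeneous elements $x_1,\ldots,x_k\in\sN^*$ with $\deg(x_i)=\ga_i$ which induce $\si_i$ on $\sN_0\sT$ and whose $r_i$-th powers are $\tau$-invariant (hence lie in $\sR^*$). Once these $x_i$ are in hand, the proposition follows because the subalgebras $\lan L_i\sT,x_i\ran$ are graded isomorphic to the cyclic algebras $(L_i\sT/\sT,\si_i,x_i^{r_i})$, and they centralize one another inside $\sN$.

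First, since $\sN$ is DSR for $\sTR$, fix a $\tau$-stable maximal graded subfield $\sJ$ which is totally ramified over $\sT$, where $\tau$ is the given graded $\sTR$-involution on $\sN$. Because $\Ga_\sJ=\Ga_\sN$, each prescribed $\ga_i$ lies in $\Ga_\sJ$, so we can pick a nonzero $x_i\in\sJ_{\ga_i}$. The $x_i$ commute pairwise since they lie in the graded field $\sJ$. By the definition of $\Theta_\sN$, $\intt(x_i)|_{\sN_0}=\Theta_\sN(\ga_i)=\si_i$, and as $x_i$ centralizes $\sT$, $\intt(x_i)|_{\sN_0\sT}=\si_i$ under the chosen identification $\sGal(\sN_0\sT/\sT)\cong\Gal(\sN_0/\sTz)$. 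Since $\si_i$ has order $r_i$ and $\sN$ is semiramified, $r_i\ga_i\in\ker(\Theta_\sN)\cap\Ga_\sN=\Ga_\sT$; total ramification of $\sJ$ then gives $\sJ_{r_i\ga_i}=\sT_{r_i\ga_i}$, so $x_i^{r_i}\in\sT^*$ with the correct degree $r_i\ga_i$.

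The main obstacle, and the place where the involution is indispensable, is to arrange $x_i^{r_i}\in\sR^*$ rather than merely $\sT^*$. Since $\tau$ preserves $\sJ$ and is degree-preserving, and $\sJ_{\ga_i}$ is one-dimensional over $\sJ_0=\sTz$, we may write $\tau(x_i)=c_ix_i$ with $c_i\in\sTz^*$; applying $\tau^2=\id$ yields $c_i\psi_0(c_i)=1$. By Hilbert~90 for the quadratic extension $\sTz/\sRz$, there is $d_i\in\sTz^*$ with $c_i=d_i/\psi_0(d_i)$. Replace each $x_i$ by $d_ix_i$: the new $x_i$ still lie in $\sJ_{\ga_i}$, still commute pairwise (the $d_i\in\sTz$ are central in $\sJ$), and still induce $\si_i$ on $\sN_0\sT$, but now $\tau(x_i)=x_i$. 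Setting $b_i:=x_i^{r_i}\in\sT^*$ then gives $\tau(b_i)=b_i$, so $b_i\in\sT\cap\sN^\tau=\sR^*$ with $\deg(b_i)=r_i\ga_i$.

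To assemble the decomposition, observe that for each $i$ the inclusions of $L_i\sT$ and $x_i$ into $\sN$ induce a graded $\sT$-algebra homomorphism from the cyclic algebra $(L_i\sT/\sT,\si_i,b_i)$ onto $\lan L_i\sT,x_i\ran$; since graded cyclic algebras are graded simple, this is a graded isomorphism. For $j\ne i$, $x_i$ centralizes $L_j\sT$ (as $\si_i|_{L_j}=\id$) and commutes with $x_j$, so the $k$ subalgebras centralize one another. Their product yields a graded $\sT$-algebra homomorphism
\[
(L_1\sT/\sT,\si_1,b_1)\otimes_\sT\cdots\otimes_\sT(L_k\sT/\sT,\si_k,b_k)\ \lra\ \sN,
\]
whose source is graded simple of dimension $(r_1\cdots r_k)^2=[\sN\col\sT]$; hence it is a graded isomorphism. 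Identification of this tensor product with $\sA(\sN_0\sT/\sT,\bsi,\boldsymbol 1,\bb)$ is immediate from the graded abelian crossed product construction with trivial commutator data $u_{ij}=1$, completing the proof.
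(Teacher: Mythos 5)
Your proof is correct, and the overall plan coincides with the paper's: work inside the $\tau$-stable totally ramified maximal graded subfield $\sJ$, produce $\tau$-symmetric homogeneous $x_i\in\sJ_{\ga_i}$ inducing $\si_i$ on $\sN_0\sT$, observe that $b_i := x_i^{r_i}\in\sR^*$, and assemble the cyclic subalgebras $\sC_i=\lan L_i\sT,x_i\ran\cong_g(L_i\sT/\sT,\si_i,b_i)$ into the tensor decomposition by a simplicity-plus-dimension-count argument. The one place you diverge from the paper is in producing the $\tau$-symmetric generators. You pick an arbitrary nonzero $x_i\in\sJ_{\ga_i}$, write $\tau(x_i)=c_ix_i$ with $c_i\in\sTz^*$ of norm one, and twist $x_i$ by the scalar $d_i$ supplied by Hilbert 90 for $\sTz/\sRz$. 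The paper instead considers the fixed graded subfield $\sS=\sJ^\tau$ globally: since $\sS_0=\sRz$, one has $[\sJ_0:\sS_0]=2=[\sJ:\sS]$, whence the Fundamental Equality forces $\Ga_\sS=\Ga_\sJ=\Ga_\sN$, so one can choose $x_i\in\sS_{\ga_i}$ directly. Both arguments prove that each $\sJ_\ga$ contains a nonzero $\tau$-fixed element; the paper's packaging is a shade slicker (a dimension count in place of an explicit cocycle splitting), while yours is more hands-on. After that step, the two proofs are the same.
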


\begin{proof}
Since $\sN$ is DSR for $\sTR$, there is a graded 
$\sTR$-involution $\tau$ of $\sN$ and a maximal graded 
subfield $\sJ$ of $\sN$ with $\sJ$ totally ramified 
over $\sT$ and $\tau(\sJ) = \sJ$.  As noted earlier, 
we have $\Ga_\sJ = \Ga_\sN$.  Since $\tau$ is a graded
automorphism of $\sJ$ of order $2$, the fixed set
$\sS = \sJ^\tau = \{a\in \sJ\mid \tau(a) = a\}$ is 
a graded subfield of $\sJ$ with $2 = [\sJ\col\sS] = 
[\sJ_0\col\sS_0]\,|\Ga_\sJ\col\Ga_\sS|$.
Since $\sS_0 \cap\sTz = \sRz \subsetneqq \sTz =
\sJ_0\cap \sTz$ we have  $\sS_0\subsetneqq\sJ_0$, so
$[\sJ_0\col\sS_0] = 2$, and hence $\Ga_\sS = \Ga_\sJ
\  (= \Ga_\sN)$.  
Thus, for each $i$ there is a nonzero $x_i \in \sS_{\ga_i}$,
and for any such $x_i$, 
 $\intt(x_i)|_{\sN_0\sT} = \si_i$ as 
$\Theta_\sN(\gamma_i) = \si_i$.  Let $b_i = x_i^{r_i} \in \sS^*$.
Then, $\Theta_\sN(\deg(b_i)) = \si_i^{r_i} = \id$, 
so $\deg(b_i) \in \ker(\Theta_\sN) = \Ga_\sT$;
hence, ${b_i \in \sJ_{\deg(b_i)} = \sT_{\deg(b_i)}}$ as 
$\sJ$ is totally ramified over $\sT$.  Therefore,
$b\in \sS^* \cap \sT= \sR^*$.  Let $\sC_i$ be the 
graded $\sT$-subalgebra of~$\sN$ generated by 
$L_i$ and $x_i$.  Since $\intt(x_i)|_{L_i\sT} = \si_i|_{L_i\sT}$,
 there is  a graded $\sT$-algebra epimorphism
$(L_i\sT/\sT,\si_i,b_i) \to \sC_i$, which is a graded 
isomorphism as the domain is graded simple.  Since the 
$x_i$ all lie in the graded field $\sS$
and $\si_i|_{L_j\sT} = \id$ for $j\ne i$, the distinct 
$\sC_i$ centralize each other.  Hence, there is a graded
$\sT$-algebra homomorphism 
${(L_1\sT/\sT,\si_1,b_1) \otimes _\sT \ldots \otimes _\sT 
(L_k\sT/\sT,\si_k,b_k)\to \sN}$ which is injective as 
the domain is graded simple, and surjective by dimension
count.  Clearly also, ${(L_1\sT/\sT,\si_1,b_1) \otimes _\sT \ldots \otimes _\sT 
(L_k\sT/\sT,\si_k,b_k)\cong_g \sA(\sN_0\sT/\sT, \bsi,
\bone, \bb)}$.
\end{proof} 

\begin{proposition}\label{uINdecomp} 
Let $\sE$ be a semiramified central graded division 
algebra over $\sT$, and suppose $\sE$ has a graded
$\sTR$-involution, where $\sT$ is inertial over $\sR$.
  Then, $\sE_0$ is $\sTz/\sRz$-generalized dihedral
and 
\begin{enumerate} 
  \item[{\rm(i)}]
$\sE \sim_g \sI \otimes _\sT \sN$ in $\sBr(\sT)$
for some $\sT$-central graded
division algebras $\sI$ and $\sN$ with $\sI$
inertial and $\sN$~DSR~for~$\sTR$. 
  \item[{\rm(ii)}]
Take any decomposition 
$\sT \sim_g \sI' \otimes _\sT \sN'$ in $\sBr(\sT)$
with graded $\sT$-central division algebras $\sI'$
and $\sN'$ with $\sI'$~inertial and 
$\sN'$ DSR for $\sTR$.  Then, 
$\sN_0'\cong \sE_0$, $\Ga_{\sN'}= \Ga_\sE$, $\Theta_{\sN'} = 
\Theta_\sE$, and ${[\,\sI_0'] \in \Br(\sE_0/\sTz;\sRz)}$.
Furthermore, $\sI_0'$ is uniquely determined modulo
$\Dec(\sE_0/\sTz;\sRz)$.
\end{enumerate}
\end{proposition}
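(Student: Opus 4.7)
The first assertion, that $\sE_0$ is $\sTz/\sRz$-generalized dihedral, is immediate from \cite[Lemma~4.6(ii)]{I}, since $\sE$ is semiramified (so $\sE_0 = Z(\sE_0)$) and carries a graded $\sTR$-involution.

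For part~(i), the plan is to construct $\sN$ explicitly as a tensor product of cyclic DSR algebras of the form in Example~\ref{DSRex}, and then identify $\sI$ as the graded division algebra equivalent to $\sE \otimes_\sT \sN\op$. Fix $\theta \in \Gal(\sE_0/\sRz) \setminus H$ with $\theta^2 = 1$, where $H = \Gal(\sE_0/\sTz)$, and choose a cyclic decomposition $H = \lan \si_1\ran \times \ldots \times \lan \si_k\ran$ with cyclic fixed fields $L_i$ so that $\sE_0 = L_1 \otimes_\sTz \ldots \otimes_\sTz L_k$. Because $\theta \si_i \theta\inv = \si_i\inv$ normalizes each $\lan\si_j\ran$, $\theta$ stabilizes every $L_i$ and makes it $\sTz/\sRz$-dihedral. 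Since $\sE$ is semiramified, $\ov\Theta_\sE$ is an isomorphism, so one can pick $\ga_i \in \Ga_\sE$ with $\Theta_\sE(\ga_i) = \si_i$; these are independent modulo $\Ga_\sT$ and satisfy $r_i\ga_i \in \Ga_\sT = \Ga_\sR$. Inertiality of $\sT$ over $\sR$ then supplies $b_i \in \sR^*$ of degree $r_i\ga_i$ (the order-$r_i$ condition of Example~\ref{DSRex} being automatic since $\si_i$ has order $r_i$). Thus each $\sN_i := ((L_i \otimes_\sTz \sT)/\sT, \si_i, b_i)$ is DSR for $\sTR$, and iterated application of Lemma~\ref{DSRprod} shows that $\sN := \sN_1 \otimes_\sT \ldots \otimes_\sT \sN_k$ is DSR for $\sTR$ with $\sN_0 = \sE_0$, $\Ga_\sN = \Ga_\sE$, and (by Prop.~\ref{DSRdecomp}) $\sN \cong_g \sA(\sE_0\sT/\sT, \bsi, \bone, \bb)$. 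Using graded Skolem-Noether (Prop.~\ref{grSN}), write $\sE = \sA(\sE_0\sT/\sT, \bsi, \bv, \bc)$ with homogeneous generators $y_i$ of degree exactly $\ga_i$ (adjust each $y_i$ by an element of $\sT^*$), so $\deg c_i = r_i\ga_i = \deg b_i$. The graded analog of Lemma~\ref{onen} (Remark~\ref{abeliancpprod}) then gives
\[
\sE \otimes_\sT \sN\op \ \sim_g \ \sA(\sE_0\sT/\sT, \bsi, \bv, \bc\bb\inv).
\]
Each $c_i/b_i$ lies in $(\sE_0\sT)_0^* = \sE_0^*$, so the generators of the right-hand crossed product sit in degree $0$; the algebra is therefore inertial, and the graded division algebra $\sI$ equivalent to it is inertial, yielding $\sE \sim_g \sI \otimes_\sT \sN$.

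For part~(ii), fix any decomposition $\sE \sim_g \sI' \otimes_\sT \sN'$. Corollary~\ref{ItensorE} (with $\sN'$ in the role of $\sE$ there) gives $\Ga_{\sN'} = \Ga_\sE$, $\Theta_{\sN'} = \Theta_\sE$, $\sN_0' \cong \sE_0$ (both fields of equal dimension over $\sTz$), and $\sE_0 \sim \sI_0' \otimes_\sTz \sE_0$ in $\Br(\sE_0)$, whence $[\sI_0'] \in \Br(\sE_0/\sTz)$. For the refinement $[\sI_0'] \in \Br(\sE_0/\sTz; \sRz)$, invoke the graded Albert theorem (from \cite{I}): existence of graded $\sTR$-involutions on $\sE$ and $\sN'$ forces $\cors_{\sT/\sR}[\sE] = \cors_{\sT/\sR}[\sN'] = 0$ in $\sBr(\sR)$, hence $\cors_{\sT/\sR}[\sI'] = 0$, so $\sI'$ admits a graded $\sTR$-involution whose degree-$0$ restriction is a unitary $\sTz/\sRz$-involution on $\sI_0'$. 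For uniqueness modulo $\Dec$, given a second such decomposition $\sE \sim_g \sI \otimes_\sT \sN$, apply Prop.~\ref{DSRdecomp} to \emph{both} $\sN$ and $\sN'$ using the same cyclic decomposition of $\sE_0$ and the same lifts $\ga_i$, producing $\sN \cong_g \sA(\sE_0\sT/\sT, \bsi, \bone, \bb)$ and $\sN' \cong_g \sA(\sE_0\sT/\sT, \bsi, \bone, \bb')$ with $b_i, b_i' \in \sR^*$ of equal degree $r_i\ga_i$, so $b_i/b_i' \in \sR_0^* = \sRz^*$. By the graded analog of Lemma~\ref{onen}, $[\sN] - [\sN'] = \sum_i [(L_i\sT/\sT, \si_i, b_i/b_i')]$ in $\sBr(\sT)$; each summand is inertial (degree-$0$ generator) with degree-$0$ representative $(L_i/\sTz, \si_i, b_i/b_i')$, whose class lies in $\Br(L_i/\sTz; \sRz)$ by Lemma~\ref{unitarycp} (case $k=1$), and \eqref{Decformula} identifies $\prod_i \Br(L_i/\sTz; \sRz)$ with $\Dec(\sE_0/\sTz; \sRz)$. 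Since $[\sI'] - [\sI] = [\sN] - [\sN']$ in $\sBr(\sT)$ and both sides are inertial, passing to degree-$0$ parts yields $[\sI_0'] - [\sI_0] \in \Dec(\sE_0/\sTz; \sRz)$.

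The main obstacle is the involution-theoretic bookkeeping throughout: arranging the DSR parameters $b_i$ in $\sR^*$ rather than merely $\sT^*$ (which relies on the inertiality of $\sT$ over $\sR$), invoking the graded Albert theorem to recover a $\sTR$-involution on the inertial factor $\sI'$ from those on $\sE$ and $\sN'$, and choosing compatible presentations of competing DSR factors $\sN$, $\sN'$ so that the Brauer-group difference visibly lies in the unitary $\Dec(\sE_0/\sTz; \sRz)$ and not merely in $\Dec(\sE_0/\sTz)$.
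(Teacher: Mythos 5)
Your argument is correct and, for the most part, tracks the paper's own proof closely; let me flag the one place where you take a genuinely different route and note a point you should justify more carefully.

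For part (i) you construct $\sN$ exactly as the paper does: pick a cyclic decomposition of $H$, lift the $\si_i$ via $\ov\Theta_\sE$ to $\ga_i \in \Ga_\sE$, use inertiality of $\sT/\sR$ to find $b_i \in \sR^*$ of degree $r_i\ga_i$, form the cyclic algebras of Example~\ref{DSRex}, and tensor. Your observation that the order-$r_i$ hypothesis of Example~\ref{DSRex} is automatic is right (and uses torsion-freeness of $\Ga_\sE$). Writing $\sE\otimes_\sT\sN\op$ as an abelian crossed product $\sA(\sE_0\sT/\sT,\bsi,\bold v, \bold c\bold b^{-1})$ with degree-$0$ parameters and deducing inertiality of the associated division algebra is precisely the paper's $\wh\sI$ construction. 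No issues here.

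For part (ii), the membership $[\sI_0'] \in \Br(\sE_0/\sTz;\sRz)$ is where you diverge. You invoke a graded Albert theorem and a graded corestriction $\cors_{\sT\to\sR}$ on $\sBr(\sT)$, neither of which the paper states or cites explicitly; you attribute them to \cite{I} without a precise reference, and I can't locate a graded Albert theorem there. The paper instead handles this without any corestriction machinery: it observes that $\sE\otimes_\sT\sN\op$ carries the tensor involution $\tau_\sE\otimes\tau_\sN$, deduces that $\wh\sI_0$ (hence $\sI_0$) carries a unitary $\sTz/\sRz$-involution, so $[\sI_0]\in\Br(\sE_0/\sTz;\sRz)$; then for an arbitrary decomposition it writes $[\sI_0'] = [\sB_0][\sI_0]$ with $[\sB_0]\in\Dec(\sE_0/\sTz;\sRz)\subseteq\Br(\sE_0/\sTz;\sRz)$, so membership follows by subgroup arithmetic. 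Your route is conceptually cleaner if a graded Albert theorem is in hand, but as written there is a gap: you would need to either prove the graded Albert theorem (the ``hard'' direction, that vanishing corestriction gives a graded involution, is not an immediate consequence of the ungraded statement) or replace this step by the paper's more elementary argument. Your treatment of the uniqueness modulo $\Dec(\sE_0/\sTz;\sRz)$ is correct and coincides with the paper's: applying Prop.~\ref{DSRdecomp} to both $\sN$ and $\sN'$ with the same $\ga_i$ (legitimate since $\Ga_\sN = \Ga_{\sN'} = \Ga_\sE$ and $\Theta_\sN = \Theta_{\sN'} = \Theta_\sE$) yields parameters $b_i, b_i' \in \sR^*$ of the same degree, so $b_i/b_i' \in \sRz^*$, and the difference of the inertial factors is then visibly in $\Dec(\sE_0/\sTz;\sRz)$.
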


\begin{proof} (i) 
Since $\sE$ is semiramified, $\sE_0\sT$ is an inertial 
 maximal graded subfield of $\sE$.  Moreover, as 
$\sE$~has a graded $\sTR$-involution, $\sE_0$
is $\sTz/\sRz$-generalized dihedral, by \cite[Lemma~4.6(ii)]{I}.   
Because $\sE$ has an inertial graded maximal subfield,
it is a graded abelian crossed product:  Say 
$\sE_0 = L_1\otimes_\sTz\ldots\otimes _\sTz L_k$, where 
each field  $L_i$~is cyclic Galois over $\sTz$ (so 
dihedral over $\sRz$).  Then $G = \sGal(\sE_0\sT/\sT)
\cong \Gal(\sE_0/\sTz)$ has a corresponding 
cyclic decomposition
$G = \lan \si_1\ran\times \ldots \times \lan\si_k\ran$,
where each $\si_i|_{L_j\sT} = \id$ for $j\ne i$, 
and $\si_i|_{L_i\sT}$ generates $\sGal(L_i\sT/\sT)$.
Let $r_i = |\lan\si_i\ran|=[L_i\col\sTz]$.
By Lemma~\ref{incp}, $\sE = \sA(\sE_0\sT/\sT, \bsi, \bu,\bb)$
where each $u_{ij} \in \sEz^*$, $b_i \in \sEz\sT^*$,
 $\frac 1{r_i}\deg(b_i) + \Ga_\sT$ has order $r_i$ in 
$\Ga_\sE/\Ga_\sT$, and
\begin{equation}\label{relGamma}
\textstyle\Ga_\sE/\Ga_\sT  \, = \  
\lan\frac 1{r_1}\deg(b_1) + \Ga_\sT \ran\times \ldots \times
\lan\frac 1{r_k}\deg(b_k) + \Ga_\sT \ran.
\end{equation}
So, $\deg(b_i) \in \Ga_{\sEz\sT} = \Ga_\sT = \Ga_\sR$ and 
the image of  $\deg(b_i)$ has order $r_i$ in $\Ga_\sT/r_i\Ga_\sT$. 
For each $i$, choose $c_i\in \sR^*$ with $\deg(c_i) = 
\deg(b_i)$.  Let 
$$
\sN \, = \  \sC_1\otimes _\sT \ldots \otimes_\sT \sC_k,
\ \ \text{where each}\ \ \sC_i = (L_i\sT/\sT, \si_i, c_i).
$$ 
By Ex.~\ref{DSRex} each $\sC_i$ is DSR for $\sTR$ with 
$(\sC_i)_0 \cong L_i$ and $\Ga_{\sC_i} = \lan\frac1{r_i}
\deg( c_i)\ran +\Ga_\sT =\lan\frac1{r_i}
\deg(b_i)\ran +\Ga_\sT$.  It follows by
induction on~$k$ using  Lemma~\ref{DSRprod} 
and \eqref{relGamma} that 
$\sN$ is a graded division algebra which  is DSR for $\sTR$. 
Choose $z_i\in \sC_i^*$ with $\intt(z_i)|_{L_i\sT} = \si_i$
and $z_i^{r_i} = c_i$.  
Then, when we view $z_i\in \sN^*$, we have $\intt(z_i) = \si_i$
on all of $\sN_0\sT$.  Since further $z_iz_j= z_jz_i$ for all 
$i,j$, our
$\sN$ is the graded abelian crossed product
$\sN = \sA(\sE_0\sT/\sT, \bsi, \bone, \bf c)$.  For its 
opposite algebra $\sN\op$ we then have 
$\sN\op \cong_g \sA(\sE_0\sT/\sT, \bsi, \bone, \bf d)$
where each $d_i = c_i\inv$.  
Let $\wh \sI = \sA(\sE_0\sT/\sT, \bsi, \bu, \bf e)$
where each $e_i =b_id_i=b_i c_i\inv\in\sEz^*$.  The $u_{ij}$ and $b_i$
satisfy conditions \eqref{urels} and \eqref{brel}, as do the 
$c_i$ with the corresponding $u_{ij}=1$; hence the $u_{ij}$ here and~ 
$e_i$ satisfy \eqref{urels} and \eqref{brel};  also,  $\deg(u_{ij}) = 0$
for all $i,j$.
So, $\wh \sI$ is a well-defined graded abelian crossed product. 
By Remark~\ref{abeliancpprod}, 
we have $\wh \sI \sim_g\sE \otimes_\sT \sN\op$.
There are homogeneous $x_1, \ldots, x_k\in \wh \sI^{\, *}$ 
such that $\intt(x_i)|_{\sEz\sT} = \si_i$, $x_i^{r_i}= e_i$, and 
$x_ix_jx_i\inv x_j\inv = \uij$ for all $i,j$.  Then, 
$\deg(x_i) = \frac 1 {r_i}\deg(e_i) = 0$; hence,
$\deg(x^{\bold i}) = 0$ for each $\bold i \in 
\mathcal I=\prod_{i=1}^k
\{0,1,2,\dots,r_i-1\}$. Thus, in $\wh\sI  = \bigoplus
_{\bold i \in \mathcal I}\sEz\sT x^{\bold i}$ we have
$\wh \sI_{\,0}  =\bigoplus_{\bold i \in \mathcal I}
\sEz x^{\bold  i}  \cong A(\sEz/\sTz, \bsi, \bu, \bold e)$,
which is a central simple $\sTz$-algebra with 
$\dim_\sTz(\,\,\wh \sI_{\,0}) = [\sEz\col\sTz]^2 = 
\dim_\sT\big(\,\wh \sI\,\big)$. Hence, $\wh \sI$
is inertial over~$\sT$. Since $\wh\sI$ is simple, 
by Lemma~\ref{zerosimple} $\wh\sI\cong_g M_\ell(\sI)$
for a graded division algebra $\sI$ with $\wh\sI_{\,0} \cong
M_\ell(\sI_0)$.  Then, $[\sI_0\col \sTz] = 
\frac 1{\ell^2}\dim_\sTz(\,\,\wh \sI_{\,0})= 
\frac 1{\ell^2}\dim_\sT\big(\,\wh \sI\,\big) =
[\sI\col\sT]$, showing that $\sI$ is inertial over $\sT$.
Since $\sI \sim_g \wh \sI$, we have in~$\sBr(\sT)$,
$$
[\sE] \, = \, [\sE]\,[\sN]\inv[\sN] \, = \,
[\sE \otimes_\sT\sN\op] \, [\sN] \, = \,[\,\wh\sI\,]\,[\sN]
\, = \,[\,\sI\,]\,[\sN] \,=\, [\,\sI \otimes_\sT \sN\,],  
$$
i.e., $\sE \sim_g \sI \otimes _\sT \sN$, proving 
(i).  Also, $\sN$ has a graded $\sTR$-involution $\tau_\sN$,
which is also a graded involution for~$\sN\op$, and 
 $\sE$ has a graded $\sTR$-involution $\tau_\sE$.  So,
$\tau = \tau_\sE \otimes \tau_\sN$ is a graded $\sTR$-involution
on $\wh\sI\,$, and $\tau_0 = \tau|_{\,\wh \sI_{\,0}}$ is a  
$\sTz/\sRz$-involution on $\wh\sI_{\,0}$.  
 So, in  $\Br(\sTz)$ we have 
 $[\,\sI_0]  = [\,\wh \sI_{\,0}] \in \Br(\sEz/\sTz;\sRz)$. 

(ii)  Take any decomposition $\sE \sim_g \sI' \otimes
\sN'$ as in (ii).  
Since $\sI'$ is inertial and 
$\sE$ is the graded division algebra with $\sE\sim_g
\sI'\otimes _\sT \sN'$, Cor.~\ref{ItensorE} yields 
$\sEz\sim \sI'_0 \otimes_\sTz \sN'_0$ and $ \sEz =Z(\sEz)
\cong Z(\sN'_0) = \sN'_0$, so $\sEz$~splits~
$\sI'_0$; furthermore, $\Ga_\sE = \Ga_{\sN'}$
and $\Theta_\sE = \Theta_{\sN'}$.  
We now use the $b_i$, $c_i$, $\sN$, and $\sI$ of part (i).
Because $\sN'$ is DSR with $\sN_0'\cong \sEz$ and 
$\Theta_{\sN'}(\frac 1 {r_i}\deg(c_i)) = \Theta_{\sE}
(\frac 1 {r_i}\deg(b_i)) = \si_i$, by Prop.~\ref{DSRdecomp}
there exist $c_1', \ldots, c_k'\in \sR^*$ with 
${\deg(c_i') = \deg(c_i)}$ such that 
$\sN' \cong_g\sA(\sEz\sT/\sT, \bsi, \bone, \bold c')$.  
Let $\sB = \sA(\sEz\sT/\sT, \bsi, \bone,  \bold f)$ where
each $f_i = c_i c_i^{\prime -1} \in \sR_0^*$.  So, in 
$\sBr(\sT)$, $\sB \sim_g \sN\otimes _\sT 
\sN^{\prime \,\text{op}}
\sim_g\sI'\otimes _\sT \sI\,\op$.  Because $\deg(f_i) = 0$ for 
each $i$, the argument for $\wh\sI$ in (i) shows that $\sB$ 
is inertial over $\sT$ with 
$$
\sB_0 \, \cong \ A(\sEz/\sTz, \bsi, \bone, \bold f) \ 
\cong \ (L_1/\sTz, \si_1,f_1)\otimes _\sTz \ldots 
\otimes _\sTz(L_k/\sTz, \si_k, f_k).  
$$
Thus, $[\sB_0] \in \Dec(\sEz/\sTz;\sR_0)$, as each 
$f_i \in \sR_0^*$ (see Ex.~\ref{DSRex}).
  Let 
$\sC$ be the graded division algebra with 
$\sC \sim_g \sB \sim_g \sI' \otimes _\sT\sI\op$.
Since $\sB_0$ is simple and $\sI'$ is inertial, 
Lemma~\ref{zerosimple} and Cor.~\ref{ItensorE}
yield $\sC_0 \sim \sB_0$ and 
$\sC_0 \sim (\sI'\otimes_\sT\sI\op)_0\cong\sI'_0\otimes _\sTz \sI_0\op$;
so, in $\Br(\sTz)$, 
$$
[\,\sI'_0]\, = \, [\sC_0] \, [\,\sI_0] \, =
 \, [\sB_0] \, [\,\sI_0]  \, = \,
[\sB_0]\,[\,\wh\sI_{\,0}] \, \in \Br(\sEz/\sTz;\sRz).
$$
Since $[\sB_0] \in 
\Dec(\sEz/\sTz;\sRz)$, we have ${\sI'_0 \equiv \sI_0\ 
(\modd \ \Dec(\sEz/\sTz;\sRz)\hsp)}$.  This yields the uniqueness 
of $\sI'_0$ modulo $\Dec(\sEz/\sTz;\sRz)$ independent of 
the choice of decomposition of $\sE$ as $\sI' \otimes _\sT
\sN'$.
\end{proof}

\begin{remark}
The $\sI \otimes \sN$ decomposition described in Prop.~\ref{uINdecomp}
for $\sE$ semiramified actually holds more generally for $\sE$ inertially
split (with graded $\sTR$-involution), i.e., when $\sE$ has a 
maximal graded subfield inertial over~$\sT$.  One then has 
$\sN_0 \cong Z(\sEz)$ and $\sI_0 \otimes _\sTz Z(\sEz) \sim \sEz$.
See \cite[Lemma~5.14, Th.~5.15]{jw} for the nonunitary nongraded Henselian valued 
analogue of this.    
\end{remark}

%% \vfill\eject

\section{Galois cohomology with twisted coefficients}\label{twist}

Where $\wh H\inv(H,M^*)$ occurs in formulas for $\SK$ as in \S\ref{abcp}, 
analogous formulas for the unitary $\SK$ involve $\wh H\inv(G,
\wi {M^*})$ for a twisted action of $G$ on the multiplicative group 
$M^*$. In this section, we recall the relevant twisted action, and give some 
calculations concerning $\wh H\inv$ which will be used later.  
The cohomology with twisted action also allows us to give a new 
interpretation of Albert's corestriction condition for an algebra 
to have a unitary involution, see Prop.~\ref{relbriso} below. 

Let $G$ be a profinite group with a closed subgroup $H$ with 
$|G\!\col \!H| = 2$.  From the mappping\break 
${G/H \xrightarrow{\sim} \zz/2\zz \xrightarrow{\sim} \Aut(\zz)}$ 
we obtain a nontrivial discrete $G$-module structure on 
$\zz$ for which for $g\in G$, $j\in \zz$, 
$$
g*j  \ = \  \begin{cases}  \  \ j, &\text {if } g\in H,
\\-j, &\text{if } g\notin H.
\end{cases}
$$
Let $\wi \zz$ denote $\zz$ with this new $G$-action.  
Then, for any discrete $G$-module $A$ we have an 
associated discrete $G$-module 
$\wi A = A \otimes _\zz \wi\zz$.  That is, $\wi A = A$
as an abelian group, but the $G$-action 
on $\wi A$ (denoted by $*$, while $\cdot$~denotes the 
$G$-action on $A$) is given by 
\begin{equation}\label{tildeseq}
g*a  \ = \  \begin{cases}  \  \ g\cdot a, &\text {if } g\in H,
\\-g\cdot a, &\text{if } g\notin H,
\end{cases}
\quad \text{for all $g\in G$, $a\in A$}. 
\end{equation}
So, the actions of $H$ on $\wi A$ and on $A$ coincide, and  
$\wi{\wi A\,} = A$ as $G$-modules.  The cohomology of 
such modules is discussed in \cite[Appendix]{ae},
\cite[\S30.B]{kmrt}, \cite[\S5]{hkrt}.  Notably,
there is a canonical short exact sequence of $G$-modules
$$
0  \ \longrightarrow \  \wi A \  \longrightarrow  \ 
\Ind_{H\to G}(A) \ \longrightarrow  \ A \ 
\longrightarrow  \ 0
$$
Since Shapiro's Lemma says that 
$\widehat H^i(G, \Ind_{H\to G}(A))
\cong  \wh H^i(H,A)$ for all $i\in \zz$, this yields  
a long exact sequence of Tate cohomology groups:
\begin{equation}\label{longexact}
\ldots  \ \lra \ \wh H^{i-1}(G, A) \ \lra \wh H^i(G, \wi  A) 
 \ \lra \ \wh H^i(H, A)  \ \lra \ \wh H^i(G,A) \ 
\lra \ \wh H^{i+1}(G, \wi  A) \  
\lra \ \ldots
\end{equation} 
(This is  stated in \cite[(30.10)]{kmrt} and \cite{ae} for nonnegative 
indices, but it is valid for $i<0$ as well.)  For the trivial 
$G$-module $\zz$ we have $|H^1(G, \wi \zz)| = 2$, as
\eqref{longexact} shows, and each connecting homomorphism
$\delta\colon \wh H^{i-1}(G,A) \to\wh  H^i(G, \wi A)$ is 
given by the cup product with the nontrivial element of 
$H^1(G, \wi \zz)$.

We will invoke the twisted cohomology typically in the following
setting:  Let $F \subseteq K \subseteq M$ be fields with 
$[K\col F] = 2$, and $M$ Galois over $F$.  Let $G = \Gal(M/F)$
and $H = \Gal(M/K)$, which is a closed subgroup of 
$G$ of index $2$.  Then, $M^*$ is a discrete $G$-module, and 
$\wi{M^*}$ denotes $M^*$ with the twisted $G$-action 
relative to $H$ 
described above.  Recall that $\Br(M/K;F)$ denotes the subgroup of 
$\Br(M/K)$ consisting of classes of central simple $K$-algebras
split by $M$ and having a unitary $K/F$-involution.

\begin{prop}\label{relbriso} 
$H^2(G,  \wi{M^*}) \cong \Br(M/K; F)$.
\end{prop}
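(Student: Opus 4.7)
The plan is to derive the isomorphism directly from the long exact sequence \eqref{longexact}, combined with Hilbert 90 and Albert's theorem. Specializing \eqref{longexact} to $i = 2$ and $A = M^*$ yields the segment
\begin{equation*}
\wh H^1(G, M^*) \;\lra\; \wh H^2(G, \wi{M^*}) \;\lra\; \wh H^2(H, M^*) \;\xrightarrow{\ \partial\ }\; \wh H^2(G, M^*).
\end{equation*}
Since $M/F$ is Galois, Hilbert 90 gives $\wh H^1(G, M^*) = 1$, so the second map is injective and identifies $\wh H^2(G, \wi{M^*})$ with $\ker(\partial)$.

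Next I would identify $\partial$ with the corestriction $\cors_{H \to G}$. This is a classical fact: the sequence \eqref{longexact} arises from the short exact sequence of $G$-modules $0 \to \wi{M^*} \to \Ind_{H\to G}(M^*) \to M^* \to 0$ whose quotient map is the standard augmentation, and under Shapiro's isomorphism $\wh H^i(G, \Ind_{H\to G}(M^*)) \cong \wh H^i(H, M^*)$ the map induced by augmentation becomes precisely the corestriction (cf.~\cite[(30.10)]{kmrt}).

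Finally, under the crossed product isomorphisms $\wh H^2(H, M^*) \cong \Br(M/K)$ and $\wh H^2(G, M^*) \cong \Br(M/F) \subseteq \Br(F)$, the cohomological corestriction $\cors_{H \to G}$ corresponds to the Brauer group corestriction $\cors_{K \to F}\colon \Br(M/K) \to \Br(F)$. By Albert's theorem (recalled earlier in the paper when defining $\Br(M/K;F)$), the kernel of $\cors_{K \to F}$ on $\Br(M/K)$ is precisely $\Br(M/K; F)$. Combining these identifications gives
\begin{equation*}
H^2(G, \wi{M^*}) \;\cong\; \ker(\cors_{H \to G}) \;\cong\; \Br(M/K; F),
\end{equation*}
as desired. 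The only nontrivial point is the identification of $\partial$ with corestriction; this is standard and requires only a reference to the literature rather than new calculation.
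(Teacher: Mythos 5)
Your proof is essentially the paper's proof: both specialize the long exact sequence \eqref{longexact} at $i=2$, use Hilbert 90 to kill $H^1(G,M^*)$, identify the next map with the corestriction, and invoke Albert's theorem to recognize $\ker(\cors_{K\to F})$ as $\Br(M/K;F)$. The only cosmetic difference is that you spell out why the map in \eqref{longexact} is corestriction, which the paper takes for granted by labeling it $\cors$ in \eqref{H2seq}.
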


\begin{proof}
Part of the long exact sequence \eqref{longexact} is 
\begin{equation}\label{H2seq}
%%\begin{CD}
H^1(G, M^*)\ \lra \ H^2(G,\wi{M^*})\ \ \lra \ \ H^2(H,M^*)
\  \ \overset \cors\lra \  \ H^2(G,M^*)
%%\end{CD}
\end{equation}  
By Albert's theorem \cite[Th.~3.1(2)]{kmrt}, for $[A] \in \Br(M/K)$,
the algebra $A$ has a $K/F$-involution iff $\cors_{K\to F}(A)$
is split.  Thus, in the isomorphism $\Br(M/K)\cong
H^2(H,M^*)$, $\Br(M/K;F)$ maps isomorphically to 
${\ker\big(H^2(H,M^*) \overset \cors \lra H^2(G,M^*)\big)}$.
Because $H^1(G, M^*) = 0$ by the homological Hilbert 90, 
the exact sequence \eqref{H2seq} above yields the desired 
isomorphism.
\end{proof}

\begin{remark}\label{Hiformulas}
Here are formulas for $\wh H^i(G, \wi{M^*})$ for small
$i$, which are easily derived from 
standard group cohomology formulas and 
\eqref{longexact} above.  We assume $[M\col K]< \infty$, and 
let $\theta$ be any element of $G\setminus H$.  So, 
$\Gal(K/F) = \{\id_K,\theta|_K\}$. We write $b^{1-\theta}$ for 
$b/\theta(b)$.
\begin{align*}
\quad\text{(i)}& & H^1(G, \wi{M^*})  \ &\cong \  
F^*\big/N_{K/F}(K^*) 
 \ \cong \  \wh H^0(\Gal(K/F), K^*).\\
\text{(ii)} & &
H^0(G, \wi{M^*}) \ &\cong \  \{c\in K^*\mid N_{K/F}(c) = 1\}.\\
\text{(iii)}& &
\wh H^0(G,\wi{M^*}) \ &\cong \ 
\{ c\in K^*\mid N_{K/F}(c) = 1\}
\, \big/ \, \{N_{M/K}(m)^{1-\theta}\mid m \in M^*\} 
\qquad\qquad\qquad\qquad\qquad\qquad\\
& & & = \ 
\{ b^{1-\theta}\mid b\in K^*\} \, \big/ \, 
\{N_{M/K}(m)^{1-\theta}\mid m \in M^*\}. 
\end{align*}
We will be working particularly with $\wh H^{-1}(G, \wi{M^*})$.
For this, let $\wi N\colon\wi{M^*} \to K^*$ be given by 
$$
\wi N(m) \,= \ \textstyle \prod \limits_{g\in G}g*m 
\ = \  \prod\limits_{h\in H}h(m) \cdot (\theta h)(m)\inv \ 
= \ N_{M/K}(m)\big/\theta(N_{M/K}(m)).
$$
So, $\wi N$ is the norm map for $\wi{M^*}$ as a $G$-module. 
Note that 
\begin{equation}\label{kerNtilde}
\ker(\wi N) \, = \ \{m\in M^*\mid  N_{M/K}(m) \in F^*\}.
\end{equation}
Also, let 
\begin{equation}\label{IsubG}
I_G(\wi {M^*})\, = \ \big\lan \,(g*m)m\inv\mid m\in M^*, 
g\in G\big\ran \ = \ \big\lan\,h(m)/m, \, h\theta(m) \,m\mid
m\in M^*, h\in H\big\ran. 
\end{equation}
Then, by definition, 
\begin{equation}\label{Hhat-1}
\wh H^{-1}(G, \wi{M^*}) \,\cong \,
\ker(\wi N) \big/ I_G(\wi {M^*}).
\end{equation}

\end{remark}

In the following useful lemma, part (ii) is an abstraction of an 
argument of Yanchevski\u\i \ \cite[proof of~Cor.~4.13]{y}. 

\begin{lemma}\label{Hdihedral}
Let $D$ be a finite dihedral group,
i.e., $D = \langle h,\theta\rangle$ 
where $\theta^2 = 1$, $\theta\ne 1$, and 
$\,\theta h\theta\inv = h\inv$,
and $h$ has finite order. 
 Let~$H = \langle h \rangle$.  Let $A$ be a $D$-module 
such that $H^1(H,A) = 0$ and $H^1(\langle \theta \rangle,
A^H) = 0$.  Let ${A^\theta = \{ a\in A \mid 
\theta\cdot a = a\}}$
and $N_H(a) = \sum_{h\in H} h\ccdot a$. Then, 
\begin{enumerate}[\upshape (i)]
\item 
$A^H + A^{\theta} \,=\, \{ a\in A\mid a - \theta \ccdot a \in A^H\}$.
\item
$A^\theta + A^{h\theta} \,=\,
 \{a\in A\mid N_H(a) \in A^\theta\}\, = \, A^\theta + A^{\theta h}$.
\item 
The map $\cors_{\langle \theta \rangle \to D} \times 
\cors_{\langle h\theta \rangle \to D}\colon
\wh H^{-1}(\langle \theta \rangle, \wi A) \times
\wh H^{-1}(\langle h\theta\rangle, \wi A) 
\to \wh H^{-1}(D, \wi A)$ is surjective.
\end{enumerate}
\end{lemma}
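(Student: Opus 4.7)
The plan is to deduce (iii) immediately from (ii) together with the standard description of $\wh H^{-1}$ via kernels of norm maps modulo augmentation-ideal images, so the substance lies in (i) and (ii). Both follow the same pattern: one containment is a routine computation from the dihedral relation $\theta h \theta^{-1} = h^{-1}$, while the reverse is a Hilbert-90-style argument invoking one of the two vanishing hypotheses. A preliminary observation I would record is that $\theta$ acts on $A^H$: for $b \in A^H$ and $h' \in H$, $h' \ccdot \theta b = \theta (h')^{-1} \ccdot b = \theta b$. This makes the hypothesis $H^1(\langle \theta \rangle, A^H) = \ker(1+\theta)/\im(1-\theta) = 0$ meaningful.

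For (i), the containment $\supseteq$ is immediate since $a - \theta \ccdot a = b - \theta \ccdot b \in A^H$ when $a = b + c$ with $b \in A^H$ and $c \in A^\theta$. For $\subseteq$, given $a$ with $\alpha := a - \theta \ccdot a \in A^H$, I note $\alpha + \theta \ccdot \alpha = 0$, so $\alpha \in \ker(1+\theta|_{A^H})$; the hypothesis $H^1(\langle\theta\rangle, A^H) = 0$ produces $b \in A^H$ with $(1-\theta)b = \alpha$, whence $a - b \in A^\theta$. For (ii), the $\supseteq$ direction is a direct check using $\theta h^i = h^{-i}\theta$: for any $c$ in $A^\theta$, $A^{h\theta}$, or $A^{\theta h}$ one computes $\theta \ccdot N_H(c) = N_H(c)$. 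For the harder $\subseteq$, given $a$ with $N_H(a) \in A^\theta$ I set $\alpha = a - \theta \ccdot a$ and verify $N_H(\alpha) = N_H(a) - \theta \ccdot N_H(a) = 0$. The first hypothesis $H^1(H, A) = 0$ then produces $a' \in A$ with $(1-h)a' = \alpha$. The key step is to rewrite $\theta \ccdot \alpha = -\alpha$ as the identity $(1+\theta)a' = (h + h^{-1}\theta)a'$; from this one reads off simultaneously that $\gamma := \theta \ccdot a' - h \ccdot a'$ lies in $A^H$ and satisfies $\gamma + \theta \ccdot \gamma = 0$. The second hypothesis $H^1(\langle \theta\rangle, A^H) = 0$ furnishes $\beta \in A^H$ with $(1-\theta)\beta = \gamma$, and replacing $a'$ by $a'' := a' + \beta$ kills $\gamma$ while preserving $(1-h)a'' = \alpha$. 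Setting $c := -h \ccdot a''$, a direct verification shows $h\theta \ccdot c = c$ and $c - \theta \ccdot c = \alpha$, so $a - c \in A^\theta$. The equality with $A^\theta + A^{\theta h}$ follows from the same argument applied with $\theta h = h^{-1}\theta$ in place of $h\theta$, taking $c := a''$ in place of $-h \ccdot a''$. The main obstacle will be spotting the rewritten identity $(1+\theta)a' = (h + h^{-1}\theta)a'$; once that is in hand, both hypotheses slot in naturally.

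For (iii), I would translate the twisted Tate cohomology into concrete form. For a cyclic subgroup $C = \langle g \rangle$ of order $2$ with $g \in D \setminus H$, the twisted $g$-action on $\wi A$ is $-g \ccdot$, so $\wh H^{-1}(C, \wi A) \cong A^g/(1 + g \ccdot)A$; this covers both $C = \langle\theta\rangle$ and $C = \langle h\theta\rangle$. For $D$ itself the twisted norm is $N_D = (1 - \theta \ccdot)N_H$, so $\ker N_D = \{a \in A : N_H(a) \in A^\theta\}$ and $\wh H^{-1}(D, \wi A) = \ker N_D/I_D(\wi A)$. Since corestriction on $\wh H^{-1}$ is induced by the natural inclusion $\ker N_C \hookrightarrow \ker N_D$ (valid because $C \le D$), the image of $\cors_{\langle\theta\rangle \to D} \times \cors_{\langle h\theta\rangle \to D}$ is $(A^\theta + A^{h\theta} + I_D(\wi A))/I_D(\wi A)$. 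By part (ii), $A^\theta + A^{h\theta} = \ker N_D$, so this image exhausts $\wh H^{-1}(D, \wi A)$.
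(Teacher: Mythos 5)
Your proposal is correct and follows essentially the same route as the paper: part (i) is the concrete unpacking of the long-exact-sequence argument, part (ii) invokes the same two $H^1$-vanishing hypotheses in the same order (your $\gamma = \theta a' - h a'$ is just $\theta$ applied to the paper's $c - \theta h\ccdot c$, and both observations about $\gamma$ do indeed fall out of the identity $(1+\theta)a' = (h+h^{-1}\theta)a'$ after hitting it with $h$), and part (iii) is the same translation of $\wh H^{-1}$ into norm kernels modulo augmentation. The differences are cosmetic rather than structural.
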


\begin{proof} (i)
We have the short exact sequence of $\lan \theta \ran$-modules
$0 \to A^H \to A \to A/A^H \to 0$.  Since 
$H^1(\lan\theta\ran,A^H) = 1$, the long exact cohomology sequence
shows that $A^\theta$ maps onto $(A/A^H)^\theta$, which yields (i).

(ii) Note that for $a\in A$, $N_H(\theta\!\ccdot\! a) = 
\sum_{k\in H}(k\theta)\ccdot a
= \sum _{k\in H}(\theta k\inv) \ccdot a = \theta \ccdot N_H(a)$. 
 The left inclusion $\subseteq$ in~(ii) follows immediately.  For the 
inverse inclusion, take $a\in A$ with $N_H(a) \in A^\theta$.  
Then, ${N_H(a-\theta\ccdot a) = 
N_H(a) - \theta\ccdot N_H(a) = 0}$.  
Since $H^1(H,A) = 0$, with $H = \langle h \rangle$, there is 
$c\in A$ with\break 
${a-\theta\ccdot a = c- h\ccdot c}$.  So,
\begin{align*}
0 \  &=  \ a - \theta\ccdot a + \theta\ccdot(a - \theta\ccdot a) \ = \ 
c- h\ccdot c + \theta\ccdot c - (\theta h) \ccdot c \\
& =  \ c-h\ccdot c + (h\theta h)\ccdot c -(\theta h)\ccdot c  \ 
= \ [c- (\theta h)\ccdot c] - h\ccdot [c- (\theta h)\ccdot c],
\end{align*}
i.e., $c- (\theta h)\ccdot c\in A^H$. Since the group action
of $\lan \theta h \ran$ on $A^H$ coincides with the 
action of $\lan \theta  \ran$ on $A^H$, we have 
$H^1(\lan \theta h \ran, A^H) \cong H^1(\lan \theta  \ran,
A^H) = 0$. Therefore, part (i) applies, with $\theta h$
replacing $\theta$. Thus, we can write $c = d+e$
with $d\in A^H$ and $e\in A^{\theta h}$, hence 
$\theta \ccdot e = h \ccdot e= (h\theta)\ccdot(\theta\ccdot e)$. 
 Now, as $d = h\ccdot d$, 
$$
a - \theta \ccdot a \ = \ c- h\ccdot c  \ = \ e- h\ccdot e 
\ = \ e - \theta \ccdot e,
$$
showing that $a+ \theta\ccdot e \in A^\theta$.  Thus, 
$a = [a +\theta \ccdot e] -\theta \ccdot e \in 
A^\theta + A^{h\theta}$,
completing the proof of  the first equality in (ii).  Since 
$\theta h = h\inv \theta$, the second equality in
(ii) follows from the first by replacing $h$ by $h\inv$.

(iii) We have $\wh H\inv(\langle \theta \rangle, \wi A)
\cong A^\theta\big/ \{a+ \theta \ccdot a\mid a\in A\}$,
$\wh H\inv(\langle h\theta \rangle, \wi A)
\cong A^{h\theta}\big/ \{a+ (h\theta) \ccdot a\mid a\in A\}$, 
and 
$$
\wh H\inv(D, \wi A) \ \cong \ \{a\in A \mid 
N_H(a) \in A^\theta\} \,\big/\, \langle a-k\ccdot a, \ 
a+(k\theta)\ccdot a \mid a\in A, \ k\in H\rangle.
$$ 
The map $\cors_{\langle \theta \rangle \to D}\colon 
\wh H\inv (\langle \theta\rangle, \wi A) 
\to \wh H\inv(D, \wi A)$ arises from the inclusion 
$A^\theta  \hookrightarrow \{ a\in A\mid N_H(a) \in A^\theta\}$; 
likewise for $\cors_{\langle h\theta \rangle \to D}\colon 
\wh H\inv (\langle h\theta\rangle, \wi A) 
\to \wh H\inv(D, \wi A)$.  Thus, the surjectivity 
asserted in part (iii) is immediate from part (ii).
\end{proof}
\begin{proposition}\label{gendihedralprop}
Let $F\subseteq K \subseteq M$ be fields with $[M\col F] <\infty$
and $M$ a $K/F$-generalized dihedral exten-sion.  Let
$G = \Gal(M/F)$ and $H = \Gal(M/K)$.  Take any 
$\theta \in G\setminus H$.  Then there is an exact sequence:
\begin{equation}\label{gendihedralexact}
\textstyle\prod\limits_{h\in H}\wh H\inv(\langle h\theta
\rangle , \wi {M^*})  \ \lra \ \wh H\inv (G, \wi {M^*})  \ 
\lra
 \ \ker(\wi N) \, \big/ \,\Pi  \ \lra \ 1
\end{equation}
where $\ker(\wi N) = \{ m\in M^*\mid N_{M/K}(m) \in F^*\}$
and $\Pi = \prod_{h\in H}M^{*h\theta}$.  In particular, if 
$M/K$ is cyclic Galois, then $\ker(\wi N) /\Pi = 1$. 
\end{proposition}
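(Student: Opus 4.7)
The plan is to use the formula $\wh H\inv(G,\wi{M^*})=\ker(\wi N)/I_G(\wi{M^*})$ from \eqref{IsubG}--\eqref{Hhat-1}, identify the image of the corestriction product, and then collapse the denominator by proving an inclusion $I_G(\wi{M^*})\subseteq \Pi$. Since $G$ is $K/F$-generalized dihedral, every $h\theta\in G\setminus H$ satisfies $(h\theta)^2=\id_M$ and $\theta h\theta=h\inv$, so
$$\wh H\inv(\langle h\theta\rangle,\wi{M^*})\,\cong\, M^{*h\theta}\big/\{m\,(h\theta)(m)\mid m\in M^*\}.$$
First I would verify $M^{*h\theta}\subseteq\ker(\wi N)$: for $m$ with $(h\theta)(m)=m$ we have $\theta(m)=h\inv(m)$, hence
$$\theta(N_{M/K}(m))\,=\,N_{M/K}(\theta(m))\,=\,N_{M/K}(h\inv(m))\,=\,N_{M/K}(m),$$
forcing $N_{M/K}(m)\in F^*$. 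The corestriction from $\langle h\theta\rangle$ to $G$ on $\wh H\inv$ is induced by the inclusion $M^{*h\theta}\hookrightarrow\ker(\wi N)$, so the image of the product map in the statement equals $[\Pi\cdot I_G(\wi{M^*})]/I_G(\wi{M^*})$, and its cokernel is $\ker(\wi N)\big/[\Pi\cdot I_G(\wi{M^*})]$.

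The main obstacle is then to prove $I_G(\wi{M^*})\subseteq \Pi$, which will replace $\Pi\cdot I_G(\wi{M^*})$ by the advertised $\Pi$. The generators $m\,(h\theta)(m)$ lie in $M^{*h\theta}\subseteq\Pi$ because $(h\theta)^2=\id_M$. The less obvious generators $h(m)/m$ with $h\in H$ I would handle by the following trick: both $m\theta(m)\in M^{*\theta}$ and $m(h\theta)(m)\in M^{*h\theta}$ lie in $\Pi$, so their ratio $(h\theta)(m)/\theta(m)=h(\theta(m))/\theta(m)$ also lies in $\Pi$; since $\theta$ is a bijection of $M^*$, this yields $h(m)/m\in\Pi$ for every $m\in M^*$. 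Combined with the image computation above, this establishes the exact sequence.

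For the final assertion, when $M/K$ is cyclic with $H=\langle h\rangle$, I would invoke Lemma~\ref{Hdihedral}(ii) with $A=M^*$ equipped with its ordinary $G$-action: the two hypotheses $H^1(H,M^*)=1$ and $H^1(\langle\theta\rangle,K^*)=1$ are Hilbert~90 for $M/K$ and for $K/F$ respectively. Its conclusion reads
$$M^{*\theta}\cdot M^{*h\theta}\,=\,\{m\in M^*\mid N_{M/K}(m)\in M^{*\theta}\}\,=\,\ker(\wi N),$$
where the second equality uses that $M^{*\theta}\cap K^*=F^*$. Since $M^{*\theta}\cdot M^{*h\theta}\subseteq\Pi\subseteq\ker(\wi N)$, we obtain $\Pi=\ker(\wi N)$, so $\ker(\wi N)/\Pi=1$.
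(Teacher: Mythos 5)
Your proof is correct and follows essentially the same route as the paper: both reduce to showing $I_G(\wi{M^*})\subseteq\Pi$ (you do this by dividing $m(h\theta)(m)$ by $m\theta(m)$ and substituting $\theta(m)$, the paper by the identity $m/h(m)=[m\theta(m)]/[\theta(m)\cdot h\theta(\theta(m))]$, which amounts to the same calculation), then identifying the image of the corestriction product inside $\ker(\wi N)/I_G(\wi{M^*})$, and finally invoking Lemma~\ref{Hdihedral}(ii) with $A=M^*$ for the cyclic case. Your treatment of the first map as a product of corestrictions and your check that $M^{*h\theta}\subseteq\ker(\wi N)$ are slightly more explicit than the paper's brief remarks, but there is no substantive difference.
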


\begin{proof}
Here, $M^{*h\theta} = \{ m\in M^*\mid h\theta(m) = m\}$.    
We have 
 $\wh H\inv(G, \wi{M^*}) \cong \ker(\wi N) 
\big/I_G(\wi {M^*})$ as in \eqref{kerNtilde}--\eqref {Hhat-1}.
For any $h\in H$ and $m \in M^*$,
$$
m/h(m)\, = \, [m\cdot \theta(m)]\big/[\theta(m) \cdot h(m)] \, = \,
[m\cdot \theta(m)]\big/[\theta(m) \cdot h\theta(\theta(m))] \, \in 
M^{*\theta} M^{*h \theta}
$$
and $m\ccdot h\theta(m)\in M^{*h\theta}$.  Hence, by \eqref{IsubG},
\begin{equation} \label{IGeq}
I_G(\wi{M^*}) \,\subseteq \, 
\textstyle\prod\limits_{h\in H}M^{*h\theta} \, = \,\Pi.
\end{equation}
  Thus, 
there is a well-defined epimorphism $\zeta\colon 
\wh H\inv(G,\wi{M^*})\to \ker(\wi N) 
\, \big/ \, \Pi$,  with
%\break 
${\ker(\zeta) = \Pi\big/I_G(\wi{M^*})}$.
Now, for $h\in H$, we have $\wh H\inv(\langle h\theta\rangle,
\wi{M^*}) \cong M^{*h\theta}\big/N_{M/M^{\langle h\theta
\rangle}}(M^*)$. So, $\prod_{h\in H} 
\wh H\inv(\langle h\theta \rangle, \wi{M^*})$ clearly maps
onto $\ker(\zeta)$, proving the exactness of 
\eqref{gendihedralexact}. If $H$ is cyclic, then $G$ is dihedral, and 
$\ker(\wi N) = \Pi$ by Lemma~\ref{Hdihedral}(ii). 
\end{proof}

\begin{remark}
In the context of Prop.~\ref{gendihedralprop}, suppose 
$H = \langle h_1, \ldots, h_m \rangle$. Then, the following
lemma shows that  
\begin{equation}\label{productongens}
\textstyle \prod\limits_{h\in H}M^{*h \theta} \ = \ 
\prod\limits_{(\varepsilon_1, \ldots , \varepsilon_m)
\in \{0,1\}^m} M^{*\,h_1^{\varepsilon_1}\ldots h_m^{\varepsilon_m} 
\theta}, 
\end{equation}
so the left term in \eqref{gendihedralexact} could be 
replaced by $\prod\limits_{(\varepsilon_1, \ldots , \varepsilon_m)
\in \{0,1\}^m} \wh H\inv(\langle h_1^{\varepsilon_1}\ldots 
h_m^{\varepsilon_m}\theta \rangle, \wi {M^*})$. One can see by looking
at examples that the product on the right in \eqref{productongens}
is minimal in that if we delete any of the terms in that product, then
the equality no longer holds in general.

\end{remark}

\begin{lemma}\label{generators}
Let $G = \lan H, \theta \ran$ be a generalized dihedral 
group, where $H$ is an abelian subgroup of $G$ with 
$|G\col H| = 2$, $\theta$ has order $2$, and 
$\theta h \theta = h\inv$ for all $h\in H$. 
Let $A$ be any $G$-module.  Suppose $H = 
\lan h_1, \ldots, h_m\ran$.  Then, 
$$
\textstyle \sum\limits_{h\in H} A^{h\theta} \ = \ 
\sum\limits_{(\varepsilon_1, \ldots \varepsilon_m) \in 
\{0,1\}^m} A^{h_1^{\varepsilon_1}\ldots h_m^{\varepsilon_m}\theta}.
$$
\end{lemma}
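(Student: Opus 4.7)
The inclusion $\supseteq$ in the lemma is immediate because each product $h_1^{\varepsilon_1}\cdots h_m^{\varepsilon_m}$ lies in $H$, so the plan is only to prove the nontrivial inclusion $\subseteq$. For any $h = h_1^{a_1}\cdots h_m^{a_m}\in H$ I want $A^{h\theta}\subseteq \sum_{\varepsilon\in\{0,1\}^m} A^{h_1^{\varepsilon_1}\cdots h_m^{\varepsilon_m}\theta}$, and I would proceed by induction on $m$. The key enabling observation is that twisting $\theta$ by any element of~$H$ preserves the generalized-dihedral setup: for $k\in H$, since $H$ is abelian and $\theta h_i\theta = h_i\inv$, one checks $(k\theta)^2 = 1$ and $(k\theta)h_i(k\theta)\inv = h_i\inv$, so $\lan h_1,\ldots,h_m,k\theta\ran$ is again generalized dihedral over $H$.

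The base case is $m = 1$. Write $H = \lan k\ran$, and take $j\in\zz$ with $a\in A^{k^j\theta}$, so $\theta(a) = k^{-j}(a)$. For $j\ge 2$ I would set $c = a + k\inv(a) + \cdots + k^{-(j-1)}(a)$ and $b = a - c$, then verify $k\theta(c) = c$ and $\theta(b) = b$ using $\theta k^{-i} = k^{i}\theta$ together with $\theta(a) = k^{-j}(a)$; this shows $a\in A^\theta + A^{k\theta}$. The analogous choice $c = -(k(a) + k^2(a) + \cdots + k^{|j|}(a))$ works for $j\le -1$, and $j = 0,1$ are trivial. These formulas are motivated by the factorization $1 - k^{-j} = (1-k\inv)(1 + k\inv + \cdots + k^{-(j-1)})$ in the integral group ring of $\lan k\ran$, which produces an explicit solution to $(1 - k\inv)c = (1 - k^{-j})a$ without any cohomological vanishing hypothesis on~$A$.

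For the inductive step, assume the lemma for $m-1$ generators. Given $h = h_1^{a_1}\cdots h_m^{a_m}$, set $k = h_1^{a_1}\cdots h_{m-1}^{a_{m-1}}$ and $\theta' = k\theta$. Applying the base case to the dihedral pair $(h_m,\theta')$ gives
$A^{h\theta} = A^{h_m^{a_m}\theta'}\subseteq A^{\theta'} + A^{h_m\theta'} = A^{k\theta} + A^{kh_m\theta}.$
Now apply the induction hypothesis to the $(m-1)$-generator generalized-dihedral subgroups $\lan h_1,\ldots,h_{m-1},\theta\ran$ and $\lan h_1,\ldots,h_{m-1},h_m\theta\ran$ (the second is generalized dihedral by the twisting observation in the first paragraph, with $h_m$ playing the role of $k$). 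This puts $A^{k\theta}$ inside $\sum_{\varepsilon\in\{0,1\}^{m-1}}A^{h_1^{\varepsilon_1}\cdots h_{m-1}^{\varepsilon_{m-1}}\theta}$ and $A^{kh_m\theta}$ inside $\sum_{\varepsilon\in\{0,1\}^{m-1}}A^{h_1^{\varepsilon_1}\cdots h_{m-1}^{\varepsilon_{m-1}}h_m\theta}$; their union is exactly the desired sum over $\{0,1\}^m$.

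The main obstacle is the base case, where I need an \emph{explicit} decomposition $a = b + c$ with $b\in A^\theta$ and $c\in A^{k\theta}$ that is valid for an arbitrary $G$-module $A$ --- Lemma~\ref{Hdihedral}(ii) gives such a decomposition only under the cohomological hypotheses $H^1(H,A) = 0$ and $H^1(\lan\theta\ran,A^H) = 0$, which are not assumed here. Once the telescoping formula for $c$ is identified, everything else in the proof is essentially bookkeeping about which twisted involution $k\theta$ or $h_m\theta$ to feed into the inductive hypothesis.
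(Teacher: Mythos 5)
Your proof is correct, but it takes a genuinely different route from the paper's. The paper first establishes the single clean inclusion $A^{h\theta}\subseteq A^{k\theta} + A^{k^2h\inv\theta}$ for all $h,k\in H$, via the two-term decomposition $a = \bigl(a + k\theta(a)\bigr) - k\theta(a)$ (one checks directly that $a+k\theta(a)\in A^{k\theta}$ and $k\theta(a)\in A^{k^2h\inv\theta}$); it then delegates the combinatorics to a cited lemma, [I, Lemma~4.9], which abstractly shows that any family of subgroups $W_h$ ($h\in H$) satisfying $W_h\subseteq W_k + W_{k^2h\inv}$ for all $h,k$ already has $\sum_h W_h$ generated by the $W_{h_1^{\varepsilon_1}\cdots h_m^{\varepsilon_m}}$. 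Your proof replaces that citation by a self-contained induction on the number of generators $m$: the base case $m=1$ uses an explicit telescoping element $c = a + k\inv(a) + \cdots + k^{-(j-1)}(a)$ rather than the paper's two-term split, and the inductive step is driven by the observation (also implicit in the paper's argument) that the twisted pair $(H,k\theta)$ is again generalized dihedral for any $k\in H$. Both arguments are correct and elementary; what the paper's approach buys is brevity by outsourcing a purely lattice-theoretic step, while yours buys self-containment at the cost of the more elaborate telescoping identity. Note that the paper's reduction $A^{h\theta}\subseteq A^{k\theta} + A^{k^2h\inv\theta}$ specialized to $k\in\{1,k\}$ only handles $j\in\{-1,0,1,2\}$ in your cyclic base case, so the iteration encoded in [I, Lemma~4.9] really is doing the same work your telescoping sum does, just packaged differently.
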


\begin{proof}
This follows from \cite[Lemma~4.9]{I} (with $A$ for $U$, 
$H$ for the abelian group $A$ and $W_h = A^{h\theta}$
for all $h\in H$), once we establish that 
$A^{h\theta} \subseteq A^{k\theta} + A^{k^2h\inv \theta}$
for all $h,k\in H$. For this, take any $a\in A^{h\theta}$.  
Then $\theta(a) = h\inv(a)$.  Hence, 
$k^2h\inv\theta(k\theta(a)) = k^2h\inv k\inv(a) = k\theta(a)$,
showing that $k\theta(a) \in A^{k^2h\inv\theta}$.  
Thus $a = [a+ k\theta(a)] - k\theta(a) \in A^{k\theta}
+  A^{k^2h\inv \theta}$, proving the required inclusion. 
\end{proof}

%% \vfill\eject

\section{Unitary relative Brauer Groups, bicylic case}\label{ubicyclic}

In  this section we prove a unitary version of   
the formula 
%of \eqref{njnj} above,\break 
$\Br(M/K)\big/\Dec(M/K) \cong \widehat{H}^{-1}(\Gal(M/K),M^*)$,
for $M$  a bicyclic Galois extension of $K$, see \eqref{njnj} above. The unitary version was 
inspired by the result of Yanchevski\u\i~ \cite[Prop.~5.5]{y}, which was a 
key part of his proof in~\cite[Th.~A]{yinverse} that any finite abelian 
group can be realized as the unitary $\SK$ of some division algebra with 
involution of the second kind.  

Let $F\subseteq K \subseteq M$ be fields with 
$[K\col F] = 2$ and $K$ Galois over $F$, and 
$M = L_1\otimes_KL_2$ with each $L_i$ cyclic
Galois over $F$.  
%% Let $G = \Gal(M/F)$ and $H = \Gal(M/K)$. 
 Assume $M$ is $K/F$-generalized
dihedral, 
as described at the beginning of \S \ref{unitaryIN}.
Let $G = \Gal(M/F)$ and $H = \Gal(M/K)$, and
choose and fix an element $\theta\in G\setminus H$.  So,
$\Gal(K/F) = \{\theta|_K, \id_K\}$.
 To simplify notation, 
let $\si$ (not $\si_1$) be a fixed generator of $\Gal(M/L_2)$,
and  $\rho$ (not $\si_2$) a fixed generator of $\Gal(M/L_1)$;
so, $H = \lan\si\ran \times \lan\rho\ran$.
Let $n = [L_1\col K]$, which is the order of $\si$ in $H$,
and let $\ell = [L_2\col K]$, which is the order of $\rho$.
 As in Prop.~\ref{gendihedralprop}, let 
$$
\ker(\widetilde{N}) \,= \,\{ a \in M^*\mid N_{M/K}(a)\in F^* \}
$$
and
\begin{equation}\label{nott}
\textstyle{\Pi}\ = \ \textstyle{\prod}_{h\in H}M^{*h\theta}
 \, = \ M^{*\theta} M^{*\rho\theta} M^{*\sigma\theta} M^{*\rho\sigma\theta}.
\end{equation}
(See \eqref{productongens} for the second equality.)

\begin{proposition}\label{thmainm}
We have  
$$
 \Br(M/K;F)\big/
\Dec(M/K;F) \,\cong \,\ker(\widetilde{N})/\Pi.
$$
\end{proposition}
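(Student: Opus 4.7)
The plan is to mimic the non-unitary argument leading to \eqref{njnj}, constructing a surjective group homomorphism $\eta\colon\Br(M/K;F)\to\ker(\widetilde N)/\Pi$ whose kernel is $\Dec(M/K;F)$. Given $[A]\in\Br(M/K;F)$, Lemma~\ref{unitarycp} (with $k=2$) presents $A$ as an abelian crossed product $A(u,b_1,b_2)$ with the unitary conditions $u\cdot\sigma\rho\theta(u)=1$ and $b_i=\theta(b_i)$, on top of the usual \eqref{bicyclicbrel}. In particular $u\in\ker(N_{M/K})\subseteq\ker(\widetilde N)$, and I define $\eta([A]):=u\cdot\Pi$. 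Well-definedness is immediate from the non-unitary formula \eqref{ghgt}: any two such presentations differ by $u'/u\in I_H(M^*)\subseteq I_G(\widetilde{M^*})\subseteq\Pi$, using \eqref{IGeq}. Multiplicativity comes from the unitary version of Lemma~\ref{onen}, and the inclusion $\Dec(M/K;F)\subseteq\ker\eta$ is clear since such classes are represented by $A(1,b_1,b_2)$ with $b_i\in F^*$.

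For surjectivity, start with $u\in\ker(\widetilde N)$. The plan is to adjust $u$ by an element of $\Pi$ so that simultaneously $N_{M/K}(u')=1$ and $\sigma\rho\theta(u')=u'^{-1}$, and then apply Hilbert~$90$ for $\sigma$ and $\rho$ to produce $b_1,b_2$ satisfying \eqref{bicyclicbrel}, followed by a $\theta$-averaging step that makes them $\theta$-invariant. The norm adjustment is possible because $N_{M/K}(M^{*\theta})\subseteq F^*$ (a consequence of the generalized-dihedral relation $\theta h\theta^{-1}=h^{-1}$) covers $N_{M/K}(\ker(\widetilde N))$, and the remaining unitary condition on $u'$ is handled by Hilbert~$90$ for the order-$2$ automorphism $\sigma\rho\theta$. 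Lemma~\ref{unitarycp} then certifies $[A(u',b_1,b_2)]\in\Br(M/K;F)$ with $\eta([A(u',b_1,b_2)])=u\cdot\Pi$.

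The principal obstacle is the reverse inclusion $\ker\eta\subseteq\Dec(M/K;F)$. Given $u\in\Pi$, decompose $u=\alpha_\theta\alpha_{\sigma\theta}\alpha_{\rho\theta}\alpha_{\sigma\rho\theta}$ with $\alpha_{h\theta}\in M^{*h\theta}$, and try to absorb these factors one at a time via \eqref{ghgt}. The difficulty is that \eqref{ghgt} only modifies $u$ by elements of $I_H(M^*)$, strictly smaller than $\Pi$, so the excess must be compensated by changes in $b_1,b_2$ that preserve $\theta$-fixedness. The key tool is Lemma~\ref{Hdihedral}(ii) applied to the dihedral subgroups $\langle\sigma,\theta\rangle$ and $\langle\rho,\theta\rangle$ acting on $M^*$: it splits each $M^{*h\theta}$-factor into a $\theta$-invariant part (whose effect on the $b_i$ is via $N_{M/L_j}$ of a $\theta$-fixed element, hence stays in $F^*$) and a Hilbert~$90$ coboundary (absorbed into the $u$-modification). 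Iterating this reduces $A$ to $A(1,b_1',b_2')$ with $b_i'\in F^*$, exhibiting $[A]$ in $\Dec(M/K;F)$. The generalized-dihedral hypothesis on $M/F$ is precisely what makes Lemma~\ref{Hdihedral} applicable and closes the argument.
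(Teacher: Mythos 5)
Your map is off by a square, and that is fatal. You define $\eta\big([A(u,b_1,b_2)]\big)=u\,\Pi$, but the paper's map $\Psi$ first applies Hilbert~90 for the order-two element $\rho\sigma\theta$ to write $u=q/\rho\sigma\theta(q)$, and then sends $[A]$ to $q\,\Pi$. These are not the same: since $q\cdot\rho\sigma\theta(q)=N_{M/M^{\rho\sigma\theta}}(q)\in M^{*\rho\sigma\theta}\subseteq\Pi$, one has $\rho\sigma\theta(q)\equiv q^{-1}\pmod\Pi$, hence $u=q/\rho\sigma\theta(q)\equiv q^2\pmod\Pi$, i.e.\ $\eta=\Psi^2$. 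As $\Psi$ is the isomorphism the proposition asserts, $\eta$ is an isomorphism only when $\ker(\widetilde N)/\Pi\cong\Br(M/K;F)/\Dec(M/K;F)$ has no $2$-torsion. That fails in general — indeed the triquadratic examples in \S8 (Prop.~\ref{Br2}) produce nontrivial $2$-torsion in exactly this quotient — so $\eta$ has kernel strictly larger than $\Dec(M/K;F)$ and is not surjective.

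The ease of your well-definedness argument is the tell: it uses only \eqref{ghgt} and $I_H(M^*)\subseteq\Pi$ and never invokes the unitary constraints, which means the map is discarding the information that distinguishes $\Br(M/K;F)$ from $\Br(M/K)$. In the paper the nontrivial work is precisely to show that the Hilbert-90 element $q$ is well-defined modulo $\Pi$ under a change of unitary presentation; that is where Lemma~\ref{Hdihedral}(ii) actually does something. Your plan for the reverse kernel inclusion also cannot close: \eqref{ghgt} only changes $u$ by elements of $I_H(M^*)$, so from $u\in\Pi\setminus I_H(M^*)$ one can never reach $u'=1$ by \eqref{ghgt}-moves, regardless of how cleverly $\Pi$ is split by Lemma~\ref{Hdihedral}. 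The correct argument starts from $q\in\Pi$, decomposes $q=q_\theta q_{\rho\theta}q_{\sigma\theta}q_{\rho\sigma\theta}$, and only then checks that the resulting $u=q/\rho\sigma\theta(q)$ lies in $I_H(M^*)$ with controlled $b_i'$. So the Hilbert-90 passage from $u$ to $q$ is not a cosmetic normalization — it is the step your proposal is missing.
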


\begin{proof} This follows by combining the formulas for unitary $\SK$
given in \cite[Prop.~5.5]{y} with the Henselian version of 
the formula in \cite[Cor.~4.11]{I}.
However, we give a direct proof avoiding the use of 
Yanchevski\u\i's  special unitary conorms, since we 
will later need
an explicit description of the isomorphism.

Define a map 
$$
\Psi\colon\Br(M/K;F) \longrightarrow \ker(\widetilde N)/\Pi
$$ 
as follows: By 
Lemma~\ref{unitarycp}, a Brauer class in $\Br(M/K;F)$ is 
represented by an algebra $A=A(u,b_1,b_2)$, where
$u,b_1,b_2$ satisfy the conditions in \eqref{bicyclicbrel} and  
$b_1 \in L_2^{*\theta}, b_2 \in L_1^{*\theta}$, and 
$u\,\rho\sigma\theta(u)=1$. By Hilbert 90 (for the group 
$\langle \rho \sigma \theta \rangle$), there is $q \in M^*$ 
with $u=q /\rho \sigma \theta(q)$. Define
$$
\Psi\big(A(u,b_1,b_2)\big)\, =\ q\,\Pi\,\in \,
\ker(\widetilde N)/\Pi.
$$ 
We will show that $\Psi$ is a well-defined, surjective 
homomorphism with kernel $\Br(L_1/K;F) \Br(L_2/K;F)$, which equals
$\Dec(M/K;F)$ (see \eqref{Decformula}).

For the well-definition of $\Psi$, first note that 
$$ 
1\, =\, N_{M/K}(u)\, =\, N_{M/K}(q/\rho\sigma \theta(q))\, =\, 
N_{M/K}(q)\big/N_{M/K}(\theta(q))\, =\,  
N_{M/K}(q)\big/\theta(N_{M/K}(q)),
$$ 
so, $q\in \ker(\widetilde N)$. Also, given $u$, the choice 
of $q$ with $q/\rho \sigma \theta(q)=u$ is unique up to a 
multiple in~$M^{*\rho \sigma \theta}$. Since  
$M^{*\rho \sigma \theta}\subseteq \Pi$, 
$\ \Psi\big(A(u,b_1,b_2)\big)$ is independent of the choice 
of $q$ from $u$. Now, suppose 
${A(u,b_1,b_2)\cong A(u',b_1',b_2')}$, with $u,b_1,b_2$ and 
$u',b_1',b_2'$ each satisfying the conditions of 
Lemma~\ref{unitarycp}(iii). We have the presentation
 $A(u,b_1,b_2)=\bigoplus_{i=0}^{n-1}\bigoplus_{j=0}^{\ell-1}Mx^iy^j$, 
where 
$\intt(x)|_M=\si$, $x^n = b_1$,
 $\intt(y)|_M=\rho$, $y^\ell = b_2$, and $xyx^{-1}y^{-1}=u$, so,  
 (see \eqref{bicyclicbrel})
\begin{equation}\label{bicyclicbrels}
b_1\in M^{\lan\si\ran}\,  =\,  L_2, 
\quad b_2\in M^{\lan\rho\ran} \, =\,  L_1,
\quad N_{M/L_2}(u) \, =\,  b_1/\rho(b_1), 
\quad N_{M/L_1}(u) \, =\,  \si(b_2)/b_2.
\end{equation}
The conditions of  Lemma~\ref{unitarycp}(iii)  
we are also assuming
are that   
\begin{equation}\label{unitaryeqs}  
b_1 \,\in \, L_2^{\theta},\qquad b_2\in L_1^{\theta}, 
\qquad\text{and} \qquad u\,\rho\si\theta(u) \,=\, 1.
\end{equation}
The corresponding conditions in \eqref{bicyclicbrels} and  
\eqref{unitaryeqs} hold for $b_1'$, $b_2'$ and $u'$.
By 
Lemma~\ref{unitarycp}, 
there is a $K/F$-involution $\tau$ of $A=A(u,b_1,b_2)$ with 
$\tau|_M=\theta, \tau(x)=x, \tau(y)=y$. We have an
isomorphism 
${A(u,b_1,b_2)\cong A(u',b_1',b_2')}$, and by Skolem-Noether 
there is such an isomorphism which restricts to the identity
on $M$. Therefore, 
 there exist $x'$ and 
$y'$ in $A^*$  such that
$\intt(x')|_M = \si$, 
$x'^n=b_1'$, $\intt(y')|_M = \rho$, $y'^\ell=b_2'$, 
and $x'y'x'^{-1}y'^{-1}=u'$.  Since $\intt(x')|_M = \intt(x)|_M$
there is $c_1\in C_A(M)^* = M^*$ with $x' = c_1 x$, and likewise
$c_2\in M^*$ with $y'= c_2 y$. By simplifying the expressions
$b_1' = (c_1x)^n$, $b_2' = (c_2y)^\ell$, and $u'=(c_1x)(c_2y) (c_1x)\inv
(c_2y)\inv$, we find that
\begin{equation}\label{prime}
b_1'\, = \, N_{M/L_2}(c_1) \,b_1, \qquad b_2' \,=\, N_{M/L_1}(c_2) \,b_2, 
\qquad u' \, =\, \big(c_1/\rho(c_1)\big) \big(\si(c_2)/c_2\big) \,u. 
\end{equation}
By Lemma~\ref{unitarycp}, there is a 
$K/F$-involution $\tau'$ on $A$ with $\tau'(x')=x', \tau'(y')=y'$, and 
$\tau'|_M=\theta$. Since $\tau'\tau\inv$ is a $K$-automorphism of $A$, 
there exists  $e\in A^*$ with $\tau'=\intt(e)\tau$. Because 
$\tau'|_M=\tau|_M$, $e \in C_A(M)=M$. The condition that 
$\tau'^2=\id_A$ implies that $e/\theta(e)\in K^*$. Since 
$e/\theta(e)\big(\theta (e/\theta(e))\big)=1$, Hilbert 90 
for $K/F$ shows 
that there is $d\in K^*$ with $d/\theta(d)=e/\theta(e)$. By replacing 
$e$ by $e/d$, we may assume that $\theta(e)=e$. The conditions that 
$c_1 x=\tau'(c_1 x)=\intt(e)\tau(c_1x)$ and 
$c_2 y=\tau'(c_2 y)=\intt(e)\tau(c_2 y)$ yield
 $$
c_1\,=\,\si\theta(c_1)\,e/\si(e) \qquad\text{and}\qquad 
c_2\,=\,\rho\theta(c_2)\,e/\rho(e),
$$  
hence,
\begin{equation}\label{rhoc1} 
\rho(c_1)\,=\,\rho\si\theta(c_1)\,\rho (e)/\rho\si(e) \qquad\text{and}\qquad 
\si(c_2)\,=\,\rho\si\theta(c_2)\,\si(e)/\rho\si(e).
\end{equation}
The equations \eqref{rhoc1} yield 
 \begin{equation}\label{t6}
 c_1/\rho(c_1)\, =\,
\big(c_1/\rho\sigma\theta(c_1)\big)\big(\rho\si(e)/\rho(e)\big) 
\qquad\text{and} \qquad
 \si(c_2)/c_2\,=\,
\big(\rho\sigma\theta(c_2)/c_2\big)\big(\si(e)/\rho\sigma(e)\big).
  \end{equation}
  Let $\widetilde{q}=(c_1/c_2)\si(e)$. Then, 
using (\ref{t6}),  \eqref{prime} and $\theta(e) = e$,
\begin{align}\label{t7}
\begin{split}
\widetilde{q}/\rho\sigma\theta(\widetilde{q}) \ &= \ 
\big(c_1/\rho\si\theta(c_1)\big)\big(\rho\si\theta(c_2)/c_2\big)
\big(\si(e)/\rho\sigma\theta\si(e)\big)\\
&= \ 
\big(c_1/\rho(c_1)\big)\,\big(\rho(e)/\rho\si(e)\big)\,
\big(\si(c_2)/c_2\big)\, \big(\rho\sigma(e)/\si(e)\big)\, 
\big(\si(e)/\rho\theta(e)\big)\\ 
&= \ 
\big(c_1/\rho(c_1)\big)\,\big(\si(c_2)/c_2\big) \ = \ u'/u.
\end{split}
\end{align}
When $q \in M^*$ is chosen so that $q/\rho\sigma\theta(q)=u$, 
set $q'=\widetilde{q}q$; then (\ref{t7}) shows 
that $q'/\rho\sigma\theta(q')=u'$. We check that $\wi q \in \Pi$: 
We have (see~(\ref{prime}) and \eqref{unitaryeqs})   
$N_{M/L_2}(c_1)=b_1'/b_1 \in L_2^{*\theta}$. Therefore, by 
Lemma~\ref{Hdihedral}(ii) applied to the dihedral group 
$\lan \sigma, \theta\ran = \Gal(M/L_2^\theta)$,  
$c_1 \in M^{*\theta} M^{*\sigma \theta}\subseteq \Pi$. 
Likewise, $c_2 \in M^{*\theta} M^{*\rho \theta}\subseteq \Pi$ 
as 
$N_{M/L_1} (c_2)=b_2'/b_2 \in L_1^{*\theta}$. Finally, since 
$\theta(e)=e$, we have $\si(e)=\si\theta(e)=
\si\theta\si^{-1}(\si(e))=\si^2\theta(\si(e))$. So, 
$\si(e)\in M^{*\si^2\theta} \subseteq \Pi$. 
Thus, ${q' \equiv q \pmod{\Pi}}$,   
which shows that $\Psi$ is well-defined independent of the 
choice of presentation of $A$ as $A(u,b_1,b_2)$ with 
$u,b_1,b_2$ as in Lemma~\ref{unitarycp}(iii).  

For the surjectivity of  $\Psi$, take any 
$q\in \ker(\widetilde{N})$ and set $u=q/\rho\sigma\theta(q)$. 
So, $u\,\rho\si\theta(u) = 1$.  Furthermore,
 as $N_{M/K}(q)\in F^*$,
$$
N_{M/K}(u)\,=\,N_{M/K}(q)/N_{M/K}(\rho\sigma\theta(q))\,=\,
N_{M/K}(q)/\theta(N_{M/K}(q))\,=\,1.
$$
Since $N_{L_2/K}(N_{M/L_2}(u))=N_{M/K}(u)=1$, by Hilbert 90 
for $L_2/K$ there is $b_1 \in L_2^*$ with 
${b_1/\rho(b_1)=N_{M/L_2}(u)}$. Then, 
$$
b_1/\rho(b_1)\,=\,N_{M/L_2}(q)\big/N_{M/L_2}(\rho\sigma\theta(q))\,=\,
N_{M/L_2}(q)/\rho\theta(N_{M/L_2}(q)).
$$ 
Hence, 
$$
1\,=\,(b_1/\rho(b_1))\,\rho\theta(b_1/\rho(b_1))\,=\,
(b_1/\theta(b_1))/\rho(b_1/\theta(b_1)),
$$
which shows that $b_1/\theta(b_1) \in L_2^\rho = K$. 
By Lemma~\ref{Hdihedral}(i) applied to the dihedral group
${\Gal(L_2/F) = \lan\rho|_{L_2},\theta|_{L_2}\ran}$, it follows that 
 $b_1=k\widehat{b_1}$ with $k\in K^*$ and 
$\widehat{b_1} \in L_2^{*\theta}$. By replacing $b_1$ with 
$\widehat{b_1}$, we may assume that $b_1 \in L_2^{*\theta}$. 
Likewise, there is $b_2 \in L_1^{*\theta}$ with 
$N_{M/L_1}(u^{-1})=b_2/\si(b_2)$. Then, as $u,b_1,b_2$ 
satisfy the conditions of \eqref{bicyclicbrel} (where $\si_1 = \si$ 
and $\si_2=\rho$) the algebra $A(u,b_1,b_2)$ exists, and 
by Lemma~\ref{unitarycp}
$[A(u,b_1,b_2)] \in \Br(M/K;F)$.  Clearly, 
$\Psi[A(u,b_1,b_2)]=q\,\Pi$. 

Finally, we determine $\ker(\Psi)$: 
If $[B] \in \Br(L_1/K;F)$ then we can assume that $B$ has 
$L_1$ as a maximal subfield. Then, by Lemma~\ref{unitarycp},
 $B\cong (L_1/K,\sigma,b_1)$, where $b_1 \in K^{*\theta} = F^*$. 
Likewise, for 
any ${[C] \in \Br(L_2/K;F)}$, we have 
$C \thicksim (L_2/K,\rho,b_2)$ for some $b_2\in F^*$. Then, 
$$
\big[B\otimes_K C\big]=\big[(L_1/K,\sigma,b_1)\otimes_K 
(L_2/K,\rho,b_2)\big]=\big[A(1,b_1,b_2)\big] \in \ker(\Psi),
$$
since when $u = 1$ we can take $q = 1$.
So $\Br(L_1/K;F)\Br(L_2/K;F) \subseteq \ker(\Psi)$. For the 
reverse inclusion, take any 
$A=A(u,b_1,b_2)$ with $[A] \in \ker(\Psi)$. Since 
$[A]\in \Br(M/K;F)$, by Lemma~\ref{unitarycp} we may assume that 
$b_1 \in L_2^{*\theta}, b_2 \in L_1^{*\theta}$ and 
$u\,\rho\sigma\theta(u)=1$. Since, $[A]\in \ker(\Psi)$, we have 
$u=q/\rho\sigma\theta(q)$ with $q\in \Pi$, so
$q=q_\theta q_{\rho\theta} q_{\si\theta}q_{\rho\sigma\theta}$, where 
$q_\theta \in M^{*\theta}, q_{\rho\theta} \in M^{*\rho \theta}$, 
$q_{\si\theta}\in M^{*\si\theta}$, and $q_{\rho\si\theta}\in 
M^{*\rho\si\theta}$. Thus, 
\begin{align*} 
u\ &= \ q/\rho\sigma\theta(q)\ = \ 
\big(q_\theta/\rho\sigma(q_\theta)\big)
\big(q_{\rho\theta}/\sigma(q_{\rho\theta})\big)
\big(q_{\sigma\theta}/\rho(q_{\sigma\theta})\big)\\
&= \ \big(q_\theta q_{\rho\theta}/\si(q_\theta q_{\rho \theta})
\big)\big(q_{\sigma\theta}\sigma(q_\theta)/
\rho(q_{\sigma\theta}\sigma(q_\theta)\big)
 \ = \ \big(c_2/\si(c_2)\big)\big(\rho(c_1)/c_1\big),
\end{align*}
where $c_2=q_\theta q_{\rho\theta}$ and 
$c_1=(q_{\sigma\theta}\sigma(q_\theta))\inv$. Then 
by~(\ref{ghgt}), $A =A(u,b_1,b_2) \cong A(u',b_1',b_2')$ 
where\break 
${u'=(c_1/\rho(c_1))(\si(c_2)/c_2)u=1}$, and 
$b_1'=N_{M/L_2}(c_1)b_1$ and $b_2'=N_{M/L_1}(c_2)b_2$.
Since $c_2\in M^{*\theta}M^{*\rho\theta}$, 
an easy calculation or an application of 
Lemma~\ref{Hdihedral}(ii)
for the dihedral group $\Gal(M/L_1^\theta) = \lan \rho, \theta\ran$
shows\break that $N_{M/L_1}(c_2) \in L_1^{*\theta}$. 
Therefore, $b_2'= N_{M/L_1}(c_2)b_2
\in L^*_1{^\theta}$, as 
$b_2\in L_1^{*\theta}$. 
But also, as in \eqref{bicyclicbrels},\break 
${\sigma(b_2')/b_2'=N_{M/L_1}(u')=N_{M/L_1}(1)=1}$. Hence, 
$b_2' \in L_1^{*\theta}\cap L_1^{*\sigma}=K^{*\theta}=F^*$. 
Likewise, as $q_{\si\theta} \in M^{*\theta}$ and $\si(q_\theta)
\in M^{*\si^2\theta} \subseteq M^{*\theta}M^{*\si\theta}$ (see  
\eqref{productongens}), we have $c_1 \in M^{*\theta}M^{*\si\theta}$.
Therefore, an
 easy calculation or Lemma~\ref{Hdihedral}(ii)
for the dihedral group $\Gal(M/L_2^\theta) = \lan\si,\theta\ran$
shows that  $N_{M/L_2}(c_1)\in L_2^{*\theta}$.  
So, arguing just as for $b_2'$, we find that $b_1' \in F^*$. Thus, 
$$
A \ \cong \  A(1,b_1',b_2') \ \cong \ 
(L_1/K,\sigma,b_1')\otimes_K(L_2/K,\rho,b_2'),
$$ 
  and since the $b_i' \in F^*$, 
$\big[(L_1/K,\sigma,b_1')\big] \in \Br(L_1/K;F)$ and 
$\big[(L_2/K,\rho,b_2')\big] \in \Br(L_2/K;F)$, by 
Lemma~\ref{unitarycp}. Thus, $\ker(\Psi)=\Br(L_1/K;F)\Br(L_2/K;F)
=\Dec(M/K;F)$. 
\end{proof}

This yields our unitary analogue to \eqref{njnj} above.

\begin{proposition} \label{unitarybicyclic}
 For $M$ bicyclic Galois over $K$ with $M$ 
$K/F$-generalized dihedral, setting $G = \Gal(M/F)$, $H=\Gal(M/K)$,
and $\theta$ any element of $G\setminus H$ as above, 
there is an exact sequence
\begin{equation}
\textstyle \prod\limits_{h\in H} \wh H\inv(\lan h\theta\ran, 
\wi{M^*}) \ 
 \lra \  \wh H\inv(G, \wi {M^*})  \ \lra  \ 
\Br(M/K;F)/\Dec(M/K;F) \  \lra\, 0
\end{equation}
\end{proposition}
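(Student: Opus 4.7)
The plan is to obtain this exact sequence by splicing together two results already established in the paper: the isomorphism of Proposition \ref{thmainm} and the exact sequence of Proposition \ref{gendihedralprop}. The hypotheses of the proposition (in particular, $M$ bicyclic Galois over $K$ with $M$ generalized dihedral over $F$) are precisely the hypotheses of both of these preceding results, so no extra verification is needed.

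First I would invoke Proposition \ref{gendihedralprop}, which (under exactly the standing hypotheses) gives the exact sequence
\begin{equation*}
\textstyle\prod\limits_{h\in H}\wh H\inv(\lan h\theta\ran,\wi{M^*}) \ \lra \ \wh H\inv(G,\wi{M^*}) \ \lra \ \ker(\wi N)/\Pi \ \lra \ 1,
\end{equation*}
where $\ker(\wi N) = \{m\in M^*\mid N_{M/K}(m)\in F^*\}$ and $\Pi = \prod_{h\in H}M^{*h\theta}$, as defined in \eqref{nott}.

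Next I would apply Proposition \ref{thmainm}, which identifies $\ker(\wi N)/\Pi$ with $\Br(M/K;F)/\Dec(M/K;F)$ as abelian groups via the map $\Psi$ constructed there. Substituting this isomorphism into the right-hand term of the exact sequence above yields the claimed exact sequence. There is essentially no obstacle: both ingredients have already been proved and the conclusion is just their composition. The only mild point to check is that the identifications are compatible (i.e., that the connecting map $\wh H\inv(G,\wi{M^*}) \to \ker(\wi N)/\Pi$ of Proposition \ref{gendihedralprop}, when post-composed with the inverse of $\Psi$, yields a genuine group homomorphism to $\Br(M/K;F)/\Dec(M/K;F)$), but this is immediate since $\Psi$ is established as an isomorphism in Proposition \ref{thmainm}.
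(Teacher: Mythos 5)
Your argument is exactly the paper's: the author proves Proposition \ref{unitarybicyclic} by citing precisely Proposition \ref{thmainm} and Proposition \ref{gendihedralprop} and splicing them together as you do. Correct and identical in approach.
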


\begin{proof}
This follows  from Prop.~\ref{thmainm} and Prop.~\ref{gendihedralprop}.
\end{proof}

%% \vfill\eject

\section{Semiramfied algebras}\label{semiram} 

We now apply the results of the preceding sections 
to the calculation of unitary $\SK$ for semiramified 
graded division algebras with graded $\sTR$-involution
Throughout this section, fix a graded field $\sT$
and a graded subfield $\sR$ of $\sT$ with 
$[\sT\col\sR] = 2$ and $\sT$ Galois over $\sR$, say with 
$\sGal(\sT/\sR) = \{\id,\psi\}$.  Assume further that 
$\sT$ is inertial over $\sR$.  Thus, $\Ga_\sT = \Ga_\sR$, 
$[\sTz\col\sRz] = 2$, $\sTz$ is Galois over with $\Gal
(\sTz/\sRz) = \{ \id , \psi_0\}$, where $\psi_0 = \psi|_\sTz$,
and $\psi = \psi_0\otimes \id_\sR$ when we identify 
 $\sT$ with $\sTz \otimes_\sRz \sR$.  
By definition, for a central simple graded division algebra 
$\sB$ over $\sT$ with a graded unitary $\sTR$-involution $\tau$,
 the unitary 
$\SK$ is given by 
$$
\SK(\sB, \tau) \ = \ \Sigma_\tau'(\sB) \,\big / \, 
\Sigma_\tau(\sB), 
$$
where 
$$
\Sigma_\tau'(\sB) \ = \ \{b\in \sB^*\mid \Nrd_\sB(b) \in \sR\}\qquad\text{and}\qquad
\Sigma_\tau(\sB)\ = \ \big\lan \{ b\in \sB^*|\ \tau(b) = b\}\big\ran
$$
We are assuming that $\sTR$ is 
inertial because otherwise $\sTR$ is totally ramified and  $\SK(\sB,\tau) = 1$, 
by \cite[Th.~4.5]{I}. It is known by \cite[Lemma~2.3(iii)]{I} that 
$[\sB^*,\sB^*] \subseteq \Sigma _\tau(\sB)$, so $\SK(\sB,\tau)$
is an abelian group.  Also, if $\tau'$ is another graded 
$\sTR$-involution on $\sB$, then 
$\Sigma_{\tau'}'(\sB) = \Sigma_\tau'(\sB)$
and $\Sigma_{\tau'}(\sB) = \Sigma_\tau(\sB)$, 
so $\SK(\sB,\tau') =\SK(\sB,\tau)$. The easy proof is analogous
to the ungraded proof given in \cite[Lemma~1]{yin}.  

Let $\sE$ be a semiramified 
$\sT$-central graded division algebra.  So, as we have seen,
$\sEz$ is a field abelian Galois over $\sTz$, and 
$\ov\Theta_\sE \colon \Ga_\sE/\Ga_\sT \to 
\Gal(\sEz/\sTz)$ is a canonical isomorphism.
Suppose $\sE$ has a graded $\sTR$-involution $\tau$;
so $\tau|_\sTz = \psi_0$.  We have seen in Prop.~\ref{uINdecomp} 
that $\sEz$ is then a $\sTz/\sRz$-generalized dihedral Galois
extension.  Let $H = \Gal(\sEz/\sTz)$ and $G = \Gal
(\sEz /\sRz)$, and let $\ov \tau = \tau|_{\sEz} 
\in G\setminus H$.  For each $\ga\in \Ga_\sE$ choose and
fix $x_\ga \in \sE_\ga$ with $x_\ga\ne 0$ and $\tau(x_\ga)
= x_\ga$.  (Such $\xg$ exist, by \cite[Lemma~4.6(i)]{I}.)  Our starting
point is the formula proved in \cite[Th.~4.7]{I}
\begin{equation}\label{USK1PiX}
\SK(\sE, \tau) \ \cong \ \big(\Sigma_\tau(\sE)' \cap \sEz^*\big)\, \big/
\big(\Sigma_\tau(\sE) \cap \sEz^*\big) \ = \ 
\ker(\wi N) \big / 
\big(\, \Pi \cdot X\,\big),
\end{equation}
 where  
\begin{align*}
\ker(\wi N) \ &= \ \{ a\in \sEz^* 
\mid N_{\sEz/\sTz}(a) \in \sRz\}; \\
\Pi \ & = \textstyle \prod \limits _{h\in H}\sEz^{*h\ov \tau},
\quad\text{where} \ \ \sEz^{*h\ov \tau} \ = \ 
\{ a\in \sEz^*\mid h\ov \tau(a) = a\};\qquad\qquad\qquad\qquad\qquad\\
X \ & = \ \big\lan \xg\xd \xs\inv \mid 
\ga, \de \in \Ga_\sE\big\ran \ \subseteq  \ \sEz^*.
\end{align*}
 Note that 
$H$ maps $\ker(\wi N)$ (resp.~$\Pi$) to itself, so $H$
acts on $\ker(\wi N) /\Pi$.  But this action is trivial
since $I_H(\ker(\wi N)) \subseteq I_G(\wi \sEz^*) 
\subseteq \Pi$ (see \eqref{IGeq} above).  

\begin{theorem}\label{unitaryDSR}
Suppose $\sE$ is DSR for $\sTR$, i.e.,
in addition to the hypotheses above, $\sE$
  has a maximal 
graded subfield $\sJ$ with $\tau(\sJ) = \sJ$.  Then,
\begin{enumerate}[\upshape (i)]
\item 
$\SK (\sE, \tau)\cong \ker(\wi N)/\Pi$, and there is an 
exact sequence
$$
\textstyle \prod\limits _{h\in H}
\wh H\inv(\lan h\ov \tau\ran, \wi \sEz^*) \ \lra \ 
\wh H\inv(G, \wi\sEz^*) \ \lra \  \SK(\sE, \tau)  \ \lra
\ 1.
$$
\item 
If $\sEz = L_1 \otimes _\sTz L_2$ with each $L_i$ cyclic Galois
over $\sTz$, then 
$$
\SK(\sE, \tau) \ \cong \ \Br(\sEz/\sTz;\sRz) \big/ 
\Dec(\sEz/\sTz;\sRz).
%%  \quad \text {where} \quad 
%%  \Dec(\sEz/\sTz;\sRz) \ = \ \Br(L_1 /\sTz; \sRz) \cdot
%% \Br(L_2/\sTz; \sRz).
$$
 \end{enumerate}
\end{theorem}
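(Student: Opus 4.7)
The plan is to start from the formula
$$\SK(\sE,\tau) \,\cong\, \ker(\wi N)\big/\big(\Pi\cdot X\big)$$
recorded in \eqref{USK1PiX}, and to deduce from the DSR hypothesis the inclusion $X\subseteq\Pi$. Once this is done, the first assertion of (i) is immediate, the exact sequence in (i) is a direct specialization of Prop.~\ref{gendihedralprop}, and (ii) follows by combining (i) with the bicyclic isomorphism of Prop.~\ref{thmainm}.

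To carry out the reduction $X\subseteq\Pi$, I would exploit the maximal graded subfield $\sJ\subseteq\sE$ that is totally ramified over $\sT$ and stable under $\tau$; such a $\sJ$ is part of the definition of $\sE$ being DSR for $\sTR$. Since $\sJ$ is totally ramified, $\sJ_0 = \sTz$ and $\Ga_\sJ = \Ga_\sE$, and each $\sJ_\gamma$ is a one-dimensional $\sTz$-space. The restriction $\tau|_\sJ$ is a graded ring automorphism of the commutative graded field $\sJ$ of order $2$ agreeing with $\psi_0$ on $\sJ_0 = \sTz$, so \cite[Lemma~4.6(i)]{I} applied to this involution on $\sJ$ furnishes, for every $\gamma\in\Ga_\sE$, a nonzero $\tau$-fixed element $\xg\in\sJ_\gamma$. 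With such $\xg$ chosen for every $\gamma$, both $\xg\xd$ and $\xs$ lie in $\sJ_{\gamma+\delta}$, so their ratio lies in $\sJ_0 = \sTz$; moreover, commutativity of $\sJ$ combined with the anti-multiplicativity of $\tau$ gives
$$\tau\big(\xg\,\xd\,\xs^{-1}\big) \,=\, \tau(\xs)^{-1}\,\tau(\xd)\,\tau(\xg) \,=\, \xs^{-1}\,\xd\,\xg \,=\, \xg\,\xd\,\xs^{-1}.$$
Hence each generator $\xg\xd\xs^{-1}$ of $X$ belongs to $\sEz^{*\ov\tau}\subseteq\Pi$, so $X\subseteq\Pi$ and $\Pi\cdot X = \Pi$.

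With this inclusion in hand, \eqref{USK1PiX} yields $\SK(\sE,\tau)\cong \ker(\wi N)/\Pi$, and the exact sequence of (i) is Prop.~\ref{gendihedralprop} applied to the tower $\sRz\subseteq\sTz\subseteq\sEz$ with $M = \sEz$, $G = \Gal(\sEz/\sRz)$, $H = \Gal(\sEz/\sTz)$, and $\theta = \ov\tau$; the generalized dihedral hypothesis holds by \cite[Lemma~4.6(ii)]{I} as noted in the discussion preceding the theorem. For (ii), the bicyclic hypothesis allows us to invoke Prop.~\ref{thmainm} with the same $M$, $K$, $F$, producing
$$\Br(\sEz/\sTz;\sRz)\big/\Dec(\sEz/\sTz;\sRz) \,\cong\, \ker(\wi N)/\Pi,$$
which combined with (i) gives the claimed formula. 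The technical heart of the argument, and the one place where the DSR-for-$\sTR$ hypothesis enters decisively, is the simultaneous selection of the $\xg$ inside a single $\tau$-stable totally ramified graded subfield; this is what forces $X$ to collapse into $\Pi$, a collapse which fails in general in the non-DSR semiramified setting (as Th.~\ref{goth} already shows in the nonunitary case).
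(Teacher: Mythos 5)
Your proposal is correct and follows essentially the same route as the paper: the paper's proof of (i) likewise observes that the $x_\gamma$ may all be chosen inside the $\tau$-stable totally ramified subfield $\sJ$, forcing $X \subseteq \sJ_0^{*\tau} = \sRz^* \subseteq \Pi$, and then cites Prop.~\ref{gendihedralprop} for the exact sequence; part (ii) is likewise obtained by combining (i) with Prop.~\ref{thmainm}. (The paper actually refers the first formula of (i) to \cite[Cor.~4.11]{I}, but the argument recorded there is the one you have reproduced.)
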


\begin{proof}
(i)  The first formula for $\SK(\sE, \tau)$ was given  
in \cite[Cor.~4.11]{I}.  The point is that the $\xg$ can all 
be chosen in~$\sJ$; then $X \subseteq \sJ_0^{*\tau}
= \sRz^* \subseteq \Pi$, so the $X$ term in \eqref{USK1PiX}
drops out. 
  The exact sequence in (i) 
then follows by Prop.~\ref{gendihedralprop}. Part (ii)
is immediate from~(i) and Prop.~\ref{thmainm}. 
\end{proof}

Note that Th.~\ref{unitaryDSR} is the unitary analogue to 
Prop.~\ref{NSRSK} for nonunitary $\SK$ in the DSR case.

To improve the formula \eqref{USK1PiX} 
in the manner of Th.~\ref{unitaryDSR} for $\sE$ semiramified but not DSR
we need more information  on the contribution of
the $X$ term.  This contribution is measured by $(\Pi\cdot X )/\Pi$.
For $\ga \in \Ga_\sE$ we write 
$\ov \ga$ for $\ga + \Ga_\sT\in \Ga_\sE/\Ga_\sT$.  

\begin{proposition}\label{Gammafn}
There is a well-defined $2$-cocycle $g\in Z^2(
\Ga_\sE/\Ga_\sT , \ker(\wi N) / \Pi)$ given by 
\begin{equation}
g(\ov \ga, \ov \de) \ = \ x_\ga x_\de x_{\ga+\de}\inv \, \Pi.
\end{equation} 
This $g$ is independent of the choice of nonzero symmetric elements
$x_\ga, x_\de , x_{\ga + \de}$ in $\sE_\ga, \sE_\de, \sE_{\ga+\de}$.
Furthermore, 
for all  $\ov \ga,\ov \de \in \Ga_\sE/\Ga_\sT$
and $i,j, k, \ell\in \zz$, we have
\begin{equation}\label{gformula} 
g(i\ov \ga+ j\ov \de, k\ov \ga + \ell\ov \de)  \ = \ g(\ov \ga,
\ov \de)^\Delta\quad \text{where}\ \ \Delta = 
\det\left(\begin{smallmatrix}
i&j\\ k&\ell
\end{smallmatrix}\right).
\end{equation}
$($In particular, $g(\ov \ga, \ov \ga) = 1 \, \Pi$ and $g(\ov \de , \ov \ga)
 = g(\ov \ga, \ov\de) \inv$.$)$  Moreover, $\lan\im(g) \ran =
\big(\Pi\cdot X\big) /\Pi$, 
which is a finite group.
\end{proposition}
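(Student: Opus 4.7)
The plan is to verify in sequence: (a) that $f_{\gamma,\delta} := x_\gamma x_\delta x_{\gamma+\delta}^{-1}$ lies in $\ker(\wi N)$; (b) the independence of its class mod $\Pi$ under all choices; (c) the 2-cocycle identity together with the normalization $g(\ov\gamma, \ov\gamma) = 1\cdot\Pi$; (d) the determinant formula \eqref{gformula}; and (e) the identity $\langle\im(g)\rangle = (\Pi\cdot X)/\Pi$ together with its finiteness.

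For (a), $f_{\gamma,\delta}\in\sEz^*$ by degree, and symmetry of $x_\gamma$ yields $\psi(\Nrd_\sE(x_\gamma)) = \Nrd_\sE(\tau(x_\gamma)) = \Nrd_\sE(x_\gamma)$, hence $\Nrd_\sE(x_\gamma)\in\sR^*$; since $\sE$ is semiramified, $\Nrd_\sE|_{\sEz^*} = N_{\sEz/\sTz}$ by \cite[Remark~2.1(iii), Lemma~2.2]{I}, so $N_{\sEz/\sTz}(f_{\gamma,\delta})\in\sRz^*$. For (b), any other symmetric $x'_\gamma = c\,x_\gamma$ in $\sE_\gamma$ forces $h_\gamma\ov\tau(c) = c$, so $c\in\sEz^{*h_\gamma\ov\tau}\subseteq\Pi$; a change at $x_\delta$ introduces a factor $h_\gamma(c')$ with $c'\in\sEz^{*h_\delta\ov\tau}$, which the generalized-dihedral relation $\ov\tau h_\gamma\ov\tau = h_\gamma^{-1}$ places in $\sEz^{*h_\gamma h_\delta h_\gamma\ov\tau}\subseteq\Pi$; similarly for $x_{\gamma+\delta}$. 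Shifts $\gamma\mapsto\gamma+\alpha$ with $\alpha\in\Gamma_\sT = \Gamma_\sR$ are absorbed by a central symmetric $t\in\sR_\alpha^*$. For (c), associativity yields $f_{\gamma,\delta}f_{\gamma+\delta,\epsilon} = h_\gamma(f_{\delta,\epsilon})f_{\gamma,\delta+\epsilon}$ in $\sEz^*$; since $I_H(\ker(\wi N))\subseteq I_G(\wi{\sEz^*})\subseteq\Pi$ (see \eqref{IGeq} and the paragraph after \eqref{USK1PiX}), the $H$-action on $\ker(\wi N)/\Pi$ is trivial, giving the usual cocycle identity, and $x_{2\gamma} := x_\gamma^2$ is symmetric, so $g(\ov\gamma,\ov\gamma) = 1\cdot\Pi$.

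The core of (d) is the alternating identity $g(\ov\delta,\ov\gamma) = g(\ov\gamma,\ov\delta)^{-1}$. Writing $x_\gamma x_\delta = b_{\gamma,\delta}\,x_\delta x_\gamma$ for the commutator factor $b_{\gamma,\delta}\in\sEz^*$, we have $f_{\gamma,\delta}f_{\delta,\gamma} = b_{\gamma,\delta}\,f_{\delta,\gamma}^2$, and a direct computation of $\tau(f_{\delta,\gamma})$ from $\tau(x_\mu) = x_\mu$ gives $(h_{\gamma+\delta}\ov\tau)(f_{\delta,\gamma}) = b_{\gamma,\delta}\,f_{\delta,\gamma}$, so
$$
b_{\gamma,\delta}^{-1}f_{\delta,\gamma}^{-2} \ = \ \big((h_{\gamma+\delta}\ov\tau)*f_{\delta,\gamma}\big)\cdot f_{\delta,\gamma}^{-1} \ \in \ I_G(\wi{\sEz^*}) \ \subseteq \ \Pi.
$$
Combined with the cocycle identity and the normalization $g(\ov\gamma,\ov\gamma) = 0$, the alternating property then yields bilinearity of $g$ on the subgroup $\mathbb{Z}\ov\gamma\oplus\mathbb{Z}\ov\delta$, and hence the determinant formula \eqref{gformula}. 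For (e), $\langle\im(g)\rangle = (\Pi\cdot X)/\Pi$ is immediate from the definition of $X$ in \eqref{USK1PiX}; this group is finite, being finitely generated (as $\Gamma_\sE/\Gamma_\sT$ is finite) and torsion, since it embeds into the quotient $\ker(\wi N)/\Pi$ of the $|G|$-torsion Tate cohomology group $\wh H\inv(G,\wi{\sEz^*})$ via Prop.~\ref{gendihedralprop}. The main obstacle is the alternating identity of (d): the recognition that $b_{\gamma,\delta}$ arises as the image of $f_{\delta,\gamma}$ under the twisted generator $(h_{\gamma+\delta}\ov\tau)$ of $I_G(\wi{\sEz^*})$ hinges on the generalized-dihedral behavior of $\tau$, and the passage from alternating plus cocycle to strict bilinearity (rather than only modulo $2$-torsion) requires a careful choice of symmetric representatives adapted to each pair $(i\ov\gamma+j\ov\delta, k\ov\gamma+\ell\ov\delta)$.
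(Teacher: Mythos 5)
Your plan through the alternating identity is sound and faithfully parallels the paper's: (a), (b), (c) match Wadsworth's calculations (independence mod $\Pi$, triviality of the $H$-action on $\ker(\wi N)/\Pi$, and the normalization $g(\ov\ga,\ov\ga)=1$), and your derivation of $g(\ov\de,\ov\ga)=g(\ov\ga,\ov\de)^{-1}$ via $\tau(f_{\de,\ga}) = \Theta_\sE(\gade)^{-1}(b_{\ga,\de}f_{\de,\ga})$, repackaged as $(h_{\gade}\ov\tau)*f_{\de,\ga}\cdot f_{\de,\ga}^{-1}\in I_G(\wi{\sEz^*})\subseteq\Pi$, is a clean reformulation of the paper's identity (iii). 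Your finiteness argument, via $\ker(\wi N)/\Pi$ being a quotient of the $|G|$-torsion group $\wh H^{-1}(G,\wi{\sEz^*})$, is in fact a different and arguably slicker route than the paper's (which deduces torsion of each generator from \eqref{gformula} itself).

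The genuine gap is in your passage from the alternating identity to the determinant formula \eqref{gformula}. You assert that ``the cocycle identity and the normalization $g(\ov\ga,\ov\ga)=1$ \dots\ then yields bilinearity,'' but this implication is false: for a $2$-cocycle $f$ with values in an arbitrary abelian group and trivial action, the skew-symmetrization $f(a,b)-f(b,a)$ is bilinear, so ``$f$ alternating'' only gives that $2f$ is bilinear. You then correctly flag this yourself (``rather than only modulo $2$-torsion''), but the ``careful choice of symmetric representatives adapted to each pair'' that you acknowledge is required is left unspecified, and that is precisely where the proof lives. The paper closes this exactly by its identity (v): since $x_\de x_\ga x_\de$ is $\tau$-symmetric and lies in $\sE_{\ga+2\de}$, one may take $x_{\ga+2\de} = x_\de x_\ga x_\de$; computing $x_\de x_\ga x_\de = c_{\de,\ga}c_{\de+\ga,\de}\,x_{\ga+2\de}$ yields $f(\de,\ga)f(\ga+\de,\de)=1$ mod $\Pi$, hence $f(\ga+\de,\de)=f(\ga,\de)$ by the alternating identity. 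An induction from (v) produces $f(\ga+j\de,\de)=f(\ga,\de)$ (identity (vi)), then $f(i\ga,j\de)=f(\ga,\de)^{ij}$ (vii), and finally \eqref{gformula} (viii) by reducing the matrix $\left(\begin{smallmatrix}i&j\\k&\ell\end{smallmatrix}\right)$ via row operations that preserve the determinant. Without this chain, you have proved only $g(i\ov\ga+j\ov\de,k\ov\ga+\ell\ov\de)^2 = g(\ov\ga,\ov\de)^{2\Delta}$, which is strictly weaker than the proposition when $\ker(\wi N)/\Pi$ has $2$-torsion.
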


\begin{proof}
For $\ga, \de\in \Ga_\sE$, set 
$$
c_{\ga,\de} \, = \, x_\ga x_\de x_{\ga +\de}\inv  \, \in \sEz^*.
$$
Note that $c_{\ga, \de} \in\ker(\wi N)$, since it is a product 
of $\tau$-symmetric elements of $\sE^*$.
 For notational convenience we work with the
function 
$$
f\colon \Ga_\sE \times \Ga_\sE \,\lra \, \ker(\wi N) /\Pi\quad 
\text{given by} \ \ f(\ga, \de)  \, = \,  c_{\ga, \de}\, \Pi.
$$
Thus, $g(\ov \ga, \ov \de) = f(\ga, \de)$
We first show that the definition  of $f$ is independent of 
the choices made of $x_\ga, x_\de, x_{\ga + \de}$. 
Fix $\ga$ and $\de$  in $\Ga_\sE$ for the moment. Take any 
$a\in \sEz^*$ with 
$\tau(ax_\ga ) = a x_\ga$. Then,\break 
${a\xg = \tau(a\xg) = \xg\ov \tau(a)
= \Theta_\sE(\ga)( \ov \tau(a)) \xg}$;
so ${a = \Theta_\sE(\ga)(\ov\tau(a))}$, i.e. 
$a\in \sEz^{*\Theta_\sE(\ga)\ov \tau}\subseteq \Pi$.
Hence, if we let $\xg' =a \xg$, then 
$\xg'\xd\xs\inv \equiv \xg\xd \xs\inv\ (\modd\ \Pi)$.
Likewise, if we take any $b\in \sEz^*$ with 
${\tau(b\xd) = b\xd}$, then $\Theta_\sE(\ga)(b) \in 
\sEz^{*\Theta_E(2\ga+\de) \ov \tau}\subseteq \Pi$
so ${\xg\xd'\xs\inv = \Theta_\sE(\ga)(b) \xg\xd\xs\inv\equiv 
\xg\xd\xs\inv \ (\modd \ \Pi)}$.  Again, for\break
 $d\in \sEz^*$
with $\tau(d\xs) = d\xs$, we have $d\in \sEz^{*\Theta_\sE(\ga + \de)
\ov \tau}\subseteq \Pi$, so for $\xs' = d\xs$, 
we have\break
 ${\xg\xd\xs^{\prime-1} \equiv
\xg\xd \xs\inv\ (\modd \ \Pi)}$.  Thus, each such change does not affect
the value of $f(\ga, \de)$, and we are free to make such 
changes when convenient.

We prove further identities for the function $f$ which hold for all 
$\ga, \de, \ep \in \Ga_\sE$ and $i,j,k,\ell \in \zz$:
$$
\qquad (\text{i})\qquad f(\ga+\be, \de) \, = \, f(\ga, \de) \,=\, f(\ga, \de + \be)
\quad\text{for any}\ \  \be \in \Ga_\sT. \qquad\qquad\qquad\qquad\qquad\qquad
\qquad\qquad\ \, 
$$
For, as $\Ga_\sR = \Ga_\sT$, there is a nonzero $a \in \sR_\be$.
Since $a\in Z(\sE)$ and $\tau(a) = a$, we could have chosen 
$x_{\ga+\be} = a\xg$, $x_{\de+\be} = a\xd$, and 
$x_{\ga+\de +\be}= a\xs$.  Then,
$$
f(\ga+\be,\de) \, = \,(a\xg) \xd (a\xs)\inv\,\Pi \, = \, 
\xg\xd\xs\inv \, \Pi \, =\, f(\ga, \de),  
$$
and likewise $f(\ga, \de+\be) = f(\ga, \de)$.  This proves (i), 
which shows that the $g$ of the Prop. is well-defined.
$$
\qquad\,(\text{ii})\qquad f(i\ga,j\ga) \, = \, 1\,\Pi.\qquad\qquad\qquad
\qquad\qquad\qquad\qquad\qquad\qquad\qquad\qquad\qquad\qquad\qquad\qquad
\qquad\qquad
$$
For, we can choose $x_{i\ga} = \xg^i$, $x_{j\ga} = \xg^j$, and 
$x_{i\ga +j\ga} = \xg^{i+j}$. 
  Then, $c_{i\ga,j\ga} = 1$.
$$
\qquad(\text{iii})\qquad f(\de,\ga)\ = \ f(\ga,\de)\inv.
\qquad\qquad\qquad\qquad\qquad\qquad\qquad\qquad\qquad\qquad\qquad
\qquad\qquad\qquad\qquad\qquad
$$
For, by applying $\tau$ to the equation $\xg\xd = c_{\ga,\de} \xs$,
we obtain
$$
\xd \xg\, =\, \xs\ov\tau(c_{\ga, \de})\, =\, \ThetaE(\ga +\de)
(\ov \tau(c_{\ga, \de}))x_{\de+\ga}, 
$$
yielding $c_{\de,\ga} = \ThetaE(\ga+\de)(\ov \tau(c_{\ga,\de}))$, 
so $c_{\de,\ga} c_{\ga,\de}\in \sEz^{*\ThetaE(\ga+\de)\ov \tau}
\subseteq \Pi$.  Formula (iii) then follows.
$$
\qquad(\text{iv})\qquad f(\ga,\de) f(\ga+\de, \ep)  \,= \,
 f(\ga, \de+\ep) f(\de, \ep), \qquad\qquad\qquad\qquad\qquad\qquad
\qquad\qquad\qquad\qquad\qquad\quad \ \ 
$$
i.e., $f\in Z^2(\GaE, \ker(\wi N) /\Pi)$, since 
$\GaE$ (acting via $\ThetaE(\GaE) = H$) acts trivially on 
$\ker(\wi N) /\Pi$. This identity follows
from $(\xg\xd)x_\ep = x_\ga(\xd x_\ep)$, which yields 
$c_{\ga, \de}\, c_{\ga+ \de, \ep} = 
\ThetaE(\ga)(c_{\de,\ep}) \,c_{\ga, \de +\ep}$.
Then (iv) follows, given the trivial action of 
$H$ on  $\ker(\wi N) /\Pi$.  
$$
\qquad(\text{v})\qquad f(\ga + \de, \de) \,=\, f(\ga, \de)
\quad \text {and} \quad f(\ga, \ga + \de) \,=\, f(\ga, \de).
\qquad\qquad\qquad\qquad\qquad\qquad \qquad\qquad\qquad
$$
For, as $\tau(\xd\xg\xd) = \xd\xg\xd$, we can take 
$x_{\ga+2\de} = \xd\xg\xd$.  Then,
$$
\xd\xg\xd\, = \, c_{\de,\ga} \,c_{\de + \ga, \de} \,x_{\ga + 2\de}
\, = \, c_{\de,\ga} \,c_{\de + \ga, \de} \,\xd\xg\xd.
$$
Hence, $1\,\Pi = f(\de,\ga) f(\de+\ga, \de)$, so 
$f(\de+\ga, \de) = f(\de, \ga) \inv = f(\ga,\de)$, using (iii).
This proves the first formula in (v), and the second formula 
follows analogously, or from the first by using (iii).
$$
\qquad(\text{vi})\qquad f(\ga + j\de,\de) \, =\, f(\ga,\de)
\, = \, f(\ga, j\ga +\de) \quad\text{for all} \,\, j\in \zz.
\qquad\qquad\qquad\qquad\qquad\qquad\qquad\qquad\qquad
$$
This follows from (v) by  induction on $j$.
$$
\quad\ \ \ \, (\text{vii})\qquad f(i\ga, j\de) \, = \, f(\ga, \de) ^{ij} .
\qquad\qquad\qquad\qquad\qquad\qquad\qquad\qquad\qquad\qquad\qquad
\qquad\qquad\qquad\qquad\ \ 
\quad
$$
For, by (iv) with $j\de$ for $\de$ and $\de$ for $\ep$,
$$
f(\ga, j\de) f(\ga+ j\de,\de) \, = \, 
f(\ga,(j+1)\de)f(j\de,\de),
$$
which by (vi) and (ii) reduces to 
$ f(\ga, j\de) f(\ga, \de) = f(\ga, (j+1) \de)$. Then (vii) for 
$i = 1$ follows by induction on $j$ with the initial 
case $j = 0$ given by (ii).  From the $i = 1$ case the result for 
arbitrary $i$ follows by using~(iii).  
$$
\quad\ \ \, (\text{viii})\qquad  
f(i \ga+ j \de, k \ga + \ell \de)  \ = \ f( \ga,
 \de)^\Delta\quad \text{where}\ \ \Delta = 
\det\left(\begin{smallmatrix}i&\,j\\ k&\ell
\end{smallmatrix}\right).
\qquad\qquad\qquad\qquad\qquad\qquad\qquad\qquad\quad\ \ 
$$
For this note first that this is true if $i = 0$, as
$$
f(j\de,k\ga+\ell\de)  \, = \,  f(\de,k\ga+\ell\de)^j
\, = \, f(\de, k\ga)^j \, =\,  f(\ga,\de)^{-jk},
$$  
by (vii), (vi), (vii), and (iii).
Analogously, (viii) is true if $k = 0$.  To verify 
(viii) in general, we argue by induction on 
$|i| + |k|$.  By invoking (iii) and interchanging
$i\ga + j \de$ with $k\ga + \ell\de$ if necessary, 
we can assume $|i| \le |k|$. We can assume 
$|i|\ge 1$, since the case $|i| = 0$ is already done.
Let $\eta = \pm1$, with the sign chosen so that $|k - \eta i| 
= |k| - |i|$. Since $|i|+|k-\eta i|= |k|<|i|+|k|$, we have
by (vi) and induction,
\begin{align*}
f(i\ga+ j\de, k\ga + \ell \de)  \, &= \,  
f\big(i\ga + j\de, (k\ga + \ell\de\big)
- \eta(i\ga + j\de))\, 
=\,f\big(i\ga + j\de, (k- \eta i)\ga + (\ell - \eta j)\de\big)
\\ &= \, f(\ga,\de)^{\Delta'}\quad\text{where} \ \ \Delta' = 
\det\left(\begin{smallmatrix}i &j\\
 k-\eta i &  \ \ell-\eta j\end{smallmatrix}\right) = 
\det\left(\begin{smallmatrix}i &j\\
 k & \,\ell\end{smallmatrix}\right).
 \end{align*} 
Thus, (viii) is proved, and when  (viii) is restated in terms of 
$g$, it is formula~\eqref{gformula}. It is clear from the definition and 
well-definition of $g$ that $\lan \im(g) \ran= (\Pi\cdot X)/\Pi$.  This 
abelian group is finite since the domain of $g$  
is finite, and each $g(\ov \ga,
\ov \de)$ has finite order by formula~\eqref{gformula}. Identity (iv) above 
shows that $f$ is a $2$-cocycle, so $g$ is also a $2$-cocycle.   
\end{proof}

Remark. If the finite abelian group $\Ga_\sE/\Ga_\sT$ has exponent $e$,
then formula \eqref{gformula} shows that $\lan\im(g)\ran$ has exponent
dividing $e$.  So, we have the crude upper bound 
$|\lan\im(g)\ran| \le e^{|\Ga_\sE/\Ga_\sT|^2}$.

We can now prove a formula for unitary $\SK$ of semiramified graded 
algebras.  This is a unitary analogue to Th.~\ref{goth} above. 

\begin{theorem}\label{main}
Let  $\sE$ be a semiramified $\sT$-central graded division algebra
with a unitary graded $\sTR$-involution $\tau$, where
$\sT$ is unramified over $\sR$.  Take any decomposition 
$\sE \sim_g \sI \otimes _\sT \sN$ where $\sI$ is inertial 
with ${[\sI_0]\in \Br(\sEz/\sTz;\sRz)}$ and $\sN$ is DSR
for $\sTR$, as in Prop.~\ref{uINdecomp}  above. Then, 
\begin{enumerate} 
  \item[{\rm(i)}]  $\SK(\sE,\tau)  \cong
\big(\ker(\wi N)/\Pi\big)\big/\lan\im(g)\ran$, where 
$g$ is the function of  Prop.~\ref{Gammafn}. If 
$\sI_0 \sim A(\sEz/\sTz, \bsi, \bu, \bold b)$ as in  
Lemma~\ref{unitarycp}$($iii$)$  with $\theta = \tau|_{\sEz}$, then 
$\im(g)$ is computable from the~$u_{ij}$.
  \item[{\rm(ii)}]
If $\sEz\cong L_1\otimes_\sTz L_2$ with each $L_i$ cyclic Galois over 
$\sTz$, then 
\begin{equation*} 
\SK(\sE,\sT) \ \cong \  
\Br(\sEz/\sTz;\sRz)\big/[\Dec(\sEz/\sTz;\sRz)\cdot
 \lan[\sI_0]\ran]. 
\end{equation*}
\end{enumerate}
\end{theorem}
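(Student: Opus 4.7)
The plan is to derive part (i) formally from two ingredients already in hand: the formula
\[
\SK(\sE,\tau) \,\cong\, \ker(\wi N)\big/(\Pi\cdot X)
\]
of \eqref{USK1PiX}, and the identification $\lan\im(g)\ran = (\Pi\cdot X)/\Pi$ of Prop.~\ref{Gammafn}. These combine to give
\[
\SK(\sE,\tau) \,\cong\, \bigl(\ker(\wi N)/\Pi\bigr)\big/\lan\im(g)\ran
\]
by the third isomorphism theorem. The substantive content of (i) is then the claim that $\im(g)$ is determined by the matrix $\bu$ of $\sI_0$; part (ii) will follow by specializing to the bicyclic case and tracing $[\sI_0]$ through the explicit map $\Psi$ built in the proof of Prop.~\ref{thmainm}.

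To make the computability concrete, I would first establish a unitary analogue of Lemma~\ref{morit}: combining Prop.~\ref{uINdecomp}, Prop.~\ref{DSRdecomp}, and the graded form of (iii)$\Rightarrow$(i) of Lemma~\ref{unitarycp}, choose a presentation
\[
\sE \,=\, \sA(\sEz\sT/\sT,\bsi,\bu,\bold c)
\]
with the same $\bu$ representing $\sI_0$, and with standard generators $y_1,\dots,y_k\in\sE^*$ satisfying $\intt(y_i)|_{\sEz\sT} = \si_i$, $y_iy_jy_i\inv y_j\inv = u_{ij}$, $y_i^{r_i} = c_i$, and in addition $\tau(y_i) = y_i$. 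Setting $\ga_i := \deg(y_i) = \frac{1}{r_i}\deg(c_i)$, Lemma~\ref{incp} gives that $\ov{\ga_1},\dots,\ov{\ga_k}$ generate $\Gamma_\sE/\Gamma_\sT$ as an internal direct product of cyclic groups of orders $r_1,\dots,r_k$. Consequently, formula \eqref{gformula} forces
\[
\lan\im(g)\ran \,=\, \bigl\lan\, g(\ov{\ga_i},\ov{\ga_j}) : 1\le i,j\le k \,\bigr\ran.
\]
It remains to evaluate each $g(\ov{\ga_i},\ov{\ga_j})$. Take $x_{\ga_i}:= y_i$ and $x_{\ga_j}:= y_j$, both $\tau$-symmetric by construction, and seek a $\tau$-symmetric element of $\sE_{\ga_i+\ga_j}$ in the form $a\,y_iy_j$ with $a\in\sEz^*$. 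Since $\tau(y_iy_j) = y_jy_i = u_{ij}\inv\,y_iy_j$ and $y_iy_j$ conjugates $\sEz$ by $\si_i\si_j$, the symmetry requirement reduces to
\[
\si_i\si_j\,\ov\tau(a)/a \,=\, u_{ij}.
\]
Solvability is Hilbert 90 for the order-two automorphism $\si_i\si_j\ov\tau$ of $\sEz$, whose norm vanishing $u_{ij}\cdot\si_i\si_j\ov\tau(u_{ij}) = 1$ is exactly the unitary identity \eqref{unitaryconds} of Lemma~\ref{unitarycp}(iii). Any such $a$ then yields
\[
g(\ov{\ga_i},\ov{\ga_j}) \,=\, y_iy_j\,(ay_iy_j)\inv\,\Pi \,=\, a\inv\,\Pi,
\]
a value manifestly computable from $u_{ij}$; this finishes (i).

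For (ii), the hypothesis $\sEz \cong L_1\otimes_\sTz L_2$ forces $k=2$, so $\lan\im(g)\ran = \lan g(\ov{\ga_1},\ov{\ga_2})\ran$. Now recall the isomorphism $\Psi$ from the proof of Prop.~\ref{thmainm}: it sends $[A(u,b_1,b_2)]$ modulo $\Dec(\sEz/\sTz;\sRz)$ to $q\,\Pi$, where $u = q/\rho\si\ov\tau(q)$ by exactly the same Hilbert-90 recipe. Writing $\sI_0\sim A(u_{12},b_1,b_2)$ (which is possible by the presentation chosen above together with Lemma~\ref{unitarycp}) and taking $q = a\inv$ from the previous step shows that $\Psi$ sends $[\sI_0]\cdot\Dec(\sEz/\sTz;\sRz)$ to $g(\ov{\ga_1},\ov{\ga_2})$. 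Thus $\Psi$ identifies $\lan[\sI_0]\ran\cdot\Dec(\sEz/\sTz;\sRz)/\Dec(\sEz/\sTz;\sRz)$ with $\lan\im(g)\ran$, and (ii) follows from (i) together with Prop.~\ref{thmainm}.

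The main obstacle I anticipate is the unitary analogue of Lemma~\ref{morit}: one must simultaneously match the commutator entries $u_{ij}$ of the presentation of $\sE$ with those of the pre-specified inertial component $\sI_0$ while retaining $\tau$-symmetric standard generators $y_i$. All the ingredients are already available --- the cyclic DSR factors built over $\sR^*$-scalars from Prop.~\ref{DSRdecomp}, the realization $\sI\cong_g \sI_0\otimes_\sTz\sT$ compatible with the involution from Prop.~\ref{uINdecomp}, and the symmetric-generator construction of Lemma~\ref{unitarycp}(iii)$\Rightarrow$(i) --- but weaving them together into a single coherent graded-algebra presentation, and showing the adjustments of generators needed to match $\bu$ can be made without destroying $\tau$-symmetry, is the one step that is not purely formal.
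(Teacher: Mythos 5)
Your overall plan is the right one, and your treatment of part (ii) (the bicyclic case $k=2$) is essentially the paper's: after arranging, as you describe, a presentation $\sE=\sA(\sEz\sT/\sT,\bsi,\bu,\bold c)$ with $\tau$-symmetric $y_i$'s and the same $\bu$ as $\sI_0$, the paper also reads off $g(\ov\de,\ov\ga)=q^{-1}\Pi$ from the single commutator $u=u_{12}$ and matches it with the image of $[\sI_0]$ under the isomorphism of Prop.~\ref{thmainm}. The ``unitary analogue of Lemma~\ref{morit}'' you worry about is in fact exactly what the paper carries out inline: it builds $\sE'=\sA(\sEz\sT/\sT,\bsi,\bu,\bold d)$ with $d_i=b_ic_i$, checks $\tau(d_i)=d_i$, invokes graded Wedderburn and Skolem--Noether to get $\sE\cong_g\sE'$, and then \emph{replaces} $\tau$ by a new $\sTR$-involution $\tau'$ (via Lemma~\ref{unitarycp}(iii)$\Rightarrow$(i)) satisfying $\tau'(y_i)=y_i$; note this requires actually changing the involution, which is harmless since $\SK(\sE,\tau)$ is independent of the choice of $\sTR$-involution. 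So your worry is resolvable by the paper's own inline argument.

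The real gap is in part (i), in the step where you claim that formula \eqref{gformula} ``forces'' $\lan\im(g)\ran=\lan g(\ov{\ga_i},\ov{\ga_j}):1\le i,j\le k\ran$. Formula \eqref{gformula} only controls $g$ on the span of \emph{two} elements $\ov\ga,\ov\de$; it says nothing about values such as $g(\ov{\ga_1},\ov{\ga_2}+\ov{\ga_3})$ when three independent generators are involved. If you try to pin such a value down using only the cocycle identity together with the properties (ii), (iii), (viii) from Prop.~\ref{Gammafn}, you find, for instance, that
\[
\bigl(g(\ov{\ga_1},\ov{\ga_2}+\ov{\ga_3})\cdot g(\ov{\ga_1},\ov{\ga_2})^{-1}g(\ov{\ga_1},\ov{\ga_3})^{-1}\bigr)^2 \,=\,1,
\]
leaving a potential $2$-torsion ambiguity which is not resolved by \eqref{gformula}; since $\ker(\wi N)/\Pi$ can certainly have $2$-torsion, the claimed equality of generating sets does not follow. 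This is consistent with the remark after Prop.~\ref{Gammafn}, which only gives the crude bound $|\lan\im(g)\ran|\le e^{|\Ga_\sE/\Ga_\sT|^2}$ (i.e., one factor of $e$ for each of the $|\Ga_\sE/\Ga_\sT|^2$ pairs), not the much smaller $e^{k(k-1)/2}$ your claim would imply. The paper proves computability of $\im(g)$ for arbitrary $k$ by a different, more robust route: for \emph{every} pair $\eta,\zeta\in\Ga_\sE/\Ga_\sT$ it sets $y_\ga=y^{\bold i}$, $y_\de=y^{\bold j}$, $y_{\ga+\de}=y_\ga y_\de$, computes $a_\ga=\tau(y_\ga)y_\ga^{-1}$ etc.\ as explicit products of $\si_\ell(u_{ij})$'s, applies Hilbert 90 to get $t_\ga,t_\de,t_{\gade}$, and finally $g(\eta,\zeta)=t_\ga\si_\eta(t_\de)t_{\gade}^{-1}\Pi$. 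Your Hilbert-90 computation of $g(\ov{\ga_i},\ov{\ga_j})$ is exactly the paper's recipe restricted to generator pairs; you need to run it for arbitrary $\eta,\zeta$ (with $a_\ga$ the $\tau$-twisted product of $u_{ij}$'s) to get computability of all of $\im(g)$, rather than asserting the unproved generating-set identity.
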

 
\begin{proof}
(i)  From \eqref{USK1PiX} and Prop.~\ref{Gammafn}, 
we have 
$$
SK(\sE, \tau) \ \cong \ \big( \ker(\wi N) /\Pi\big) \big / 
\big[(\Pi\cdot X)/\Pi\big]
\ \cong \ \big(\ker(\wi N)/\Pi\big)\big/\lan\im(g)\ran.
$$
It remains to relate $\im(g)$ to the $u_{ij}$ describing $\sI_0$.

We have $\sI_0\sim A(\sEz/\sTz, \bsi, \bu,\bold b)$, as in 
Lemma~\ref{unitarycp}(iii), with $\theta = \ov \tau = \tau|_{\sTz}$.
Since $\sN$ is DSR for~$\sTR$ with $\sN_0\cong\sEz$ 
and $\Theta_\sN = \Theta_\sE$ by 
Prop.~\ref{uINdecomp},  Prop.~\ref{DSRdecomp} yields
$\sN\cong_g \sA(\sE_0\sT/\sT, \bsi,\boldsymbol 1, \bold c)$,
with each $c_i \in \sR^*$\break  with
$\deg(c_i)= r_i \ga_i$ for some $\ga_i \in \Ga_\sN = \Ga_\sE$
with $\Theta_\sE(\ga_i) = \si_i$.  Therefore, by 
Remark~\ref{abeliancpprod},\break
$\sE\sim_g \sE'$, where ${\sE' = \sA(\sEz\sT/\sT, \bsi, \bu, \bold d)}$, 
with the same $\bu$ as for $\sI_0$ and each $d_i = b_i c_i\in \sEz^*\sR^*$.
So,\break ${\tau(d_i) = \tau(c_i) \tau(b_i) = c_i b_i = b_i c_i = d_i}$.
Since $\sN$ is a semiramified graded division algebra and 
${\deg(d_i) = \deg(c_i)}$ for each $i$, Lemma~\ref{incp} applied 
to $\sN$ and to $\sE'$ shows that $\Gamma_{\sE'} = \Gamma_\sN$
and $\sE'$ is a semiramified graded division algebra.
Therefore, as $\sE$ and $\sE'$ are each graded division algebras 
with $\sE\sim_g \sE'$, we have 
$\sE \cong_g \sE'$ by the graded Wedderburn Theorem.   
So, we may assume $\sE = \sE' = \sA(\sEz\sT/\sT, \bsi, \bu, \bold d)$.
Take $y_1, \ldots, y_k \in \sE^*$ with $\intt(y_i)|_{\sEz\sT} = \si_i$, 
$y_i^{r_i} = d_i$, and $y_i y_j y_i\inv y_j\inv = u_{ij}$.  Now, 
the graded field $\sEz\sT$ is $\sTR$ generalized dihedral, 
and $\theta = \tau|_{\sEz\sT}$ lies in $\sGal(\sEz \sT/\sR) \setminus
\sGal(\sEz\sT/\sT)$.   Therefore, the proof of 
Lemma~\ref{unitarycp}~(iii)~$\Rightarrow$~(i) shows that there is a 
graded $\sTR$-involution $\tau'$ of $\sE$ with each $y_i =\tau'(y_i)$
and $\tau'|_{\sEz\sT} = \theta$. Since $\SK(\sE,\tau) = \SK(\sE,\tau')$
we may replace~$\tau$ by $\tau'$, so each $y_i = \tau(y_i)$, while 
$\ov \tau$ is unchanged.

Fix any 
$ \eta\in \Ga_\sE/\Ga_\sT$, and let 
$\si_{\eta} = \ov\Theta_E(\eta) \in H$.  Take the unique
$\bold i \in \mathcal I$  with $\si^{\bold i} = \si_{\eta}$
(notation as in~\S\ref{abcp}),  
 let $\ga =\deg(y^{\bold i})\in \Ga_\sE$, and set 
$y_\ga = y^{\bold i}$.  Since $\Theta_\sE(\ga) = 
\intt(y_\ga)|_{\sEz} = \ov \Theta_\sE(\eta)$ and 
$\ov \Theta_\sE\colon \Ga_\sE/\Ga_\sT \to H$ is an 
isomorphism for $\sE$ semiramified (see \S\ref{graded}), $\eta = \ov \ga$ in 
$\Ga_\sE/\Ga_\sT$.  
Since $\tau(y_i) = y_i$ for each $i$, 
$\tau(y_\ga)$ is the product of the $y_i$ appearing in $y_\ga$ but with 
the order reversed. Hence, the commutator identities show that 
$\tau(y_\ga) = a_\ga y_\ga$ where $a_\ga$ in $\sEz$ is a computable product
of the $u_{ij}$ and their conjugates under the $y_i$.  Since 
each $y_\ell u_{ij}y_\ell\inv = \si_\ell(u_{ij})$,  $a_\ga$ is a 
computable product of  terms $\si_\ell(u_{ij})$.  
(For example, ${\tau(y_1 y_2y_3) = y_3 y_2y_1 =
[u_{32}\si_2(u_{31})u_{21}] y_1y_2y_3}$.)
By applying $\tau$ to the equation 
$\tau(y_\ga) = a_\ga y_\ga$, we find
$$
a_\ga \, \si_\eta\tau(a_\ga) \, = \, 1.
$$
Therefore, from Hilbert 90 for the quadratic extension 
$\sEz/\sEz^{\si_\eta\tau}$, there is $t_\ga\in \sEz^*$ with 
$$
t_\ga \,[\si_\eta\tau(t_\ga)]\inv \, = \, a_\ga.
$$
Then, $\tau(t_\ga y_\ga) = t_\ga y_\ga$, so for the $x_\gamma$ in $X$ 
we can set $x_\ga = 
t_\ga y_\ga$. Now take any $\zeta \in 
\Ga_\sE/\Ga_\sT$ and  
carry out the same process for  $\zeta $ 
as we have just done for $\eta$, obtaining
$\delta \in \Ga$ with $\ov \de = \zeta$, and $y_\de$ with 
$\deg(y_\de) = \de$ and $\intt(y_\delta)|_{\sEz} = \si_\zeta$, 
then determining   $a_\de$, $t_\de$, $x_\de$.
Then set $y_{\ga+\de} = y_\ga y_\de$, so  
${\intt(y_{\ga+\de})|_{\sEz} = \si_\eta\si_\zeta}$.\break
  Let ${a_\gade = 
\tau(y_{\gade}) y_\gade\inv \in \sEz^*}$.  Since 
$a_\gade \si_\eta\si_\zeta \tau(a_\gade) = 1$, by Hilbert 90 there is 
$t_\gade\in \sEz^*$ with\break
 ${t_\gade[\si_\eta\si_\zeta\tau(b_\gade)]
 \inv = a_\gade}$.
Then set $x_\gade = t_\gade y_\gade$, so that $\tau(x_\gade) = x_\gade$.  
By the definition of the function $g$ of Prop.~\ref{Gammafn}, we have
in $\ker(\wi N)/\Pi$,
$$
g(\eta, \zeta) \,=\, x_\ga x_\de x_\gade\inv \,\Pi
=\, (t_\ga y_\ga) (t_\de y_\de)(t_\gade y_\ga y_\de)\inv \,\Pi\,
= \, t_\ga \si_\eta(t_\de) t_\gade\inv\,\Pi.
$$
Since the $t$'s are determined by the $a$'s, which are determined by 
the $u_{ij}$, this shows that $\im(g)$ is determined by the $u_{ij}$.

(ii)  Suppose now that $\sEz = L_1\otimes_\sTz L_2$ with each 
$L_i$ cyclic Galois over $\sTz$, and let $\si = \si_1$
and $\rho = \si_2$, as in \S\ref{ubicyclic}.  The isomorphism
\begin{equation}\label{unitarybicyc}
 \Br(M/K;F)\big/
\Dec(M/K;F) \,\cong \,\ker(\widetilde{N})/\Pi
\end{equation}
of Prop.~\ref{thmainm} maps $[\sI_0] = [A(u,b_1,b_2)]$
to $q\,\Pi$, where $q\in \sEz^*$ with $u = q[\rho\si\ov \tau(q)]\inv$.
Take standard generators $y_1,y_2$ of $A(u,b_1,b_2)$.  As noted for (i),
we can assume after modifying $\tau$ (without changing $\ov\tau$)
that $\tau(y_1) = y_1$ and $\tau(y_2) = y_2$. Let 
$\ga = \deg(y_1)$ and $\de = \deg(y_2)$ in $\Ga_\sE$, so 
$\Theta_\sE(\ga) = \intt(y_1)|_{\sEz} = \si$ and 
${\Theta_\sE(\de) = \intt(y_2)|_{\sEz} = \rho}$.   Since  
$\Ga_\sE/\Ga_\sT \cong H = \lan\si,\rho \ran$, we have
$\Ga_\sE/\Ga_\sT = \lan\ov \ga, \ov \de \ran$.  As 
$\tau(y_1) = y_1$, we can take $x_\ga = y_1$, and likewise
$x_\de = y_2$.  Because $\tau(y_2y_1) = uy_2y_1 = 
q[\rho\si\ov \tau(q)]\inv y_2y_1$, we have 
$\tau(q y_2y_1) = qy_2y_1$; thus, we can take 
$x_{\de+\ga} = qy_2y_1$.  Then,
$$
g(\ov \de, \ov \ga) \, = \, x_\de x_\ga x_{\de +\ga}\inv \,\Pi
\, = \, y_2y_1(qy_2y_1)\inv\,\Pi \, = \, q\inv \,\Pi.
$$
Since $\ov\de$ and $\ov\ga$ generate $\Ga_\sE/\Ga_\sT$ formula~\eqref{gformula}
shows that $\im(g) = \lan g(\ov\de, \ov \ga)\ran = \lan q\inv\Pi\ran
= \lan q\,\Pi \ran$. Therefore, the isomorphism of \eqref{unitarybicyc}
maps $\lan[\sI_0]\ran$ to $\lan q\,\Pi\ran = \lan\im(g) \ran$. Thus, the 
isomorphism asserted for (ii) follows from~(i).
\end{proof}

\begin{example}
Here is a unitary version of Ex.~\ref{cyclicex}. Take any integer 
$n\ge 2$, and let $F\subseteq K$ be fields with $[K\col F] = 2$, 
$K$ Galois over $F$, and $K = F(\omega)$ where $\omega$ is a primitive 
$n^2$-root of unity.  Suppose further that for the nonidentity element
$\psi_0$ of $\Gal(K/F)$ we have $\psi_0(\omega) = \omega\inv$.  
(For example, 
we could take $K = \qq(\omega)$, the $n^2$-cyclotomic extension of  
$\qq$, and $F = K\cap \rr$.)  Let $\sT = K[x,x\inv,y,y\inv]$, the Laurent
polynomial ring, with its usual grading by $\zz\times\zz$;
so, $\sT$ is a graded field.  Let
$\sR = F[x,x\inv,y,y\inv]$, which is a graded subfield of $\sT$
with $[\sT\col\sR] = 2$, $\sT$ Galois over $\sR$, and 
$\sT$ inertial over $\sR$.  Also, $\sGal(\sTR) = \{\psi, \id_\sT\}$,
where $\psi = \psi_0\otimes \id _\sR$ on $\sT = \sTz \otimes_\sRz \sR$.
Take any $a,b \in F^*$ such that $[K(\sqrt[n]a,\sqrt[n]b\,)\col K] = n^2$,
and let $M = K(\sqrt[n]a,\sqrt[n]b\,)$.  Then, it is easy to check that 
$M$ is $K/F$-generalized dihedral.  (One can think of such field extensions
$M/F$ as the generalized dihedral analogue to Kummer extensions.) 
Indeed,  $\psi_0$ on $K$ extends to $\theta\in \Gal(M/F)$
given by $\theta (\sqrt[n]a) =\sqrt[n]a$, $\theta(\sqrt[n]b) =
\sqrt[n]b$, and $\theta|_K = \psi_0$; so, $\theta^2 = \id_M$, 
and for $h\in \Gal(M/K)$, we have $\theta h\theta = h\inv$.
As in 
Ex.~\ref{cyclicex}, take the graded symbol algebra 
$\sE = (ax^n,by^n,\sT)_\omega$ of degree $n^2$, with its generators $i,j$ 
satisfying $i^{n^2} = ax^n$, $j^{n^2} = by^n$, $ij = \omega ji$.  
For $\si_1, \si_2$ as in Ex.~\ref{cyclicex}, it was noted there 
that $\sE = \sA(M\sT/\sT, \bsi, \bold u, \bold d)$ where 
$u_{12} = \omega$, and $d_1 = 1/(y\sqrt[n]b)$ and $d_2 = x\sqrt[n]a$.  
We extend $\theta$ to an element of $\sGal(M\sT/\sR)$ by setting 
$\theta|_\sR = \id$.  Since $\theta(d_1) =d_1$, $\theta(d_2) =d_2$,
and $u_{12}\, \si_1\si_2\theta(u_{12}) = \omega\omega\inv = 1$, the
graded version of Lemma~\ref{unitarycp} shows that there is a 
graded $\sT/\sR$-involution $\tau$ on $\sE$ given by $\tau(j\inv)
= j\inv$, $\tau(i) = i$, and $\tau|_{M\sE} = \theta$.     
That is, $\tau$ is 
the $\sR$-linear map $\sE \to \sE$ such that 
$\tau(c\,i^\ell j^m) = \psi(c) j^m i^\ell$ for all $c\in \sT$, 
$\ell,m\in \zz$.  We have 
the decomposition of $\sE$ noted in Ex.~\ref{cyclicex},
$$
\sE\,\sim_g\,  \sI \otimes_\sT \sN\qquad \text{where} \qquad
\sI \, = \, (a,b, \sT)_\omega \quad \text{and} \quad
\sN \, = \, (x,b,\sT)_{\omega^n}\otimes_\sT (a,y,\sT)_{\omega^n}.
$$
These $\sI$ and $\sN$ are $\sT$-central graded division algebras 
with $\sI$ inertial and $\sN$ DSR.  Furthermore, as\break
 ${a,b,x,y\in \sR^*}$,
there are unitary graded $\sTR$-involutions $\tau_\sI$ on $\sI$
and $\tau_\sN$ on $\sN$ defined analogously to $\tau$ on $\sE$.  So,
by Th.~\ref{unitaryDSR}(ii) 
$$
\SK(\sN, \tau_\sN) \ \cong  \ 
\Br\big(M/K;F\big)\big/
\Dec\big(M/K;F\big), \quad\text{where}\quad M \, =
 \,K(\sqrt[n]a, \sqrt[n]b\,),
$$
with $\Dec\big(M/K;F\big) = 
\Br\big(K(\sqrt[n]a\,)/K;F\big) \cdot
\Br\big (K(\sqrt[n]b\,)/K;F\big)$ by \eqref{Decformula}.
   Since $\sI_0 \cong(a,b,K)_\omega$, 
Th.~\ref{main}(ii) yields
$$
\SK(\sE, \tau) \ \cong \ \Br(M/K;F)\big/
\big[\Dec(M/K;F)\cdot \lan(a,b,K)_\omega \ran
\big].
$$
Note that $\sE$ is semiramified, but it may or may not be DSR.  Indeed, by 
Prop.~\ref{uINdecomp}(ii) $\sE$ is DSR if and only if  
${\sI_0 \in \Dec\big(M/K;F\big)}$; the formulas
above show that this holds if and only if  the obvious surjection\break 
${\SK(\sN, \tau_N) \to \SK(\sE, \tau)}$ is an isomorphism. 
Note also that $\Dec(M/K;F)$ may be strictly smaller than 
$\Dec(M/K) \cap \Br(M/K;F)$, i.e., there may be  an algebra in $\Br(M/K)$
which decomposes according to $M$ and has a $K/F$-involution, but in any
decomposition the factors do not have  $K/F$-involutions.  Examples of this 
are given in Remark~\ref{indecs} below.

For an  
ungraded version of this example, let $K$, $F$, $a$, and $b$ be as above;
then
let $K' = K((x))((y))$ and 
$F' = F((x))((y))$, and ${D = (ax^n,by^n,K')_\omega}$.  Then, 
with respect to the usual rank $2$ Henselian valuations $v_{K'}$ on 
$K'$ and $v_{F'}$ on $F'$, $K'$ is inertial of degree $2$ over $F'$.  
Furthermore, with respect to the valuation $v_D$ on $D$ extending
$v_{K'}$ on $K'$, 
$D$ is a semiramified $K'$-central division algebra
with a unitary $K'/F'$-involution $\tau_D$ defined just as for $\tau$
on $\sE$.  For the associated graded ring $\gr(D)$ of $D$ 
determined by $v_D$, we have
 $\gr(D) \cong_g \sE$, so by \cite[Th.~3.5]{I}
$\SK(D, \tau_D) \cong \SK(\sE, \tau)$.  
\end{example}
%% \vfill \eject

\section{Noninjectivity} 

For any $\sT$-central graded division algebra $\sB$ 
with unitary $\sTR$-involution $\tau$, 
there are well-defined canonical homomorphisms
\begin{equation}\label{alpha}
\al\colon \SK(\sB, \tau)  \to \SK(\sB) 
\quad\text{given by}   \ \ 
a\,\Sigma_\tau(\sB) \mapsto \tau(a) a\inv\, [\sB^*,\sB^*]  
\ \ \text{for\ } a\in \Sigma_\tau'(\sB), \qquad \ \ \, 
 \end{equation}
\vskip -0.15truein
and
\vskip-.25truein
\begin{equation*}
\be\colon \SK(\sB) \to \SK(\sB, \tau) \quad \text{given by}
 \  \ b \, [\sB^*,\sB^*] \mapsto b\, \Sigma_\tau(\sB)
\ \ \text{for\ } b\in \sB^*  \ \text {with} \ \Nrd_\sB(b) = 1.
\      
\end{equation*}
It is easy to check that $\beta \circ \alpha$ and 
$\al \circ \be$ are each the squaring map.  As
pointed out in \cite[Lemma, p.~185]{y}, since the   
exponent of the abelian group $\SK(\sB,\tau)$ divides
$\deg(\sB)$, if $\deg(\sB)$ is odd, then $\al$ must be injective.
It seems to have been an open question up to now whether 
$\alpha$ is always injective, even when $\deg(\sB)$ is even.  
We now settle this question by using some of the results above 
to give  examples of $\sB$ of  
degree $4$ with $\al$~not injective.  
We thank J.-P.~Tignol for pointing out the relevance of 
indecomposable division algebras of degree $8$ and exponent $2$, 
and for calling his paper \cite{tigtri} to our attention. 

Let $F$ be a field with $\chr(F) \ne 2$.  Let $M = F(\sqrt a, \sqrt b,
\sqrt c\,)$ 
with $a,b,c\in F^*$ and $[M\col F] = 8$.  Let ${K = F(\sqrt a\,)}$.  
We write $\Br_2(F)$ for the $2$-torsion subgroup of $\Br(F)$, 
and set ${\Br_2(M/F) = \Br(M/F)\cap \Br_2(F)}$, $\Br_2(M/K;F)
= \Br(M/K;F) \cap \Br_2(K)$, etc.  Note that as $\Gal(M/F)$ is an 
elementary 
abelian $2$-group, $M$ is  a $K/F$-generalized dihedral extension.
Also, $\res_{F\to K}$ maps $\Br_2(M/F)$ to $\Br(M/K;F)$, since for 
$[A] \in \Br_2(M/F)$, $\cors_{K\to F}  [A\otimes_F K]
= [A]^{[K: F]} = 1$ in $\Br(F)$, so by Albert's Theorem 
$A\otimes_F K$ has a 
unitary $K/F$-involution.

\begin{proposition}\label{Br2}
There is an exact sequence:
\begin{equation}\label{decexact}
0  \ \lra  \ \Br_2(M/F)\big/\Dec(M/F)  \ \lra \  
\Br(M/K;F)\big/ \Dec(M/K;F)\ \lra \ 
\Br(M/K)\big/ \Dec(M/K) 
\end{equation}
\end{proposition}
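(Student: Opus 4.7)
The approach is to show that both the image of the first map and the kernel of the second map coincide with $\Br_2(M/K;F)/\Dec(M/K;F)$ sitting naturally inside $\Br(M/K;F)/\Dec(M/K;F)$.

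First I would verify that $\res_{F\to K}$ carries $\Br_2(M/F)$ into $\Br(M/K;F)$: the composition $\cors_{K\to F}\circ\res_{F\to K}$ equals multiplication by $[K\col F]=2$ and so vanishes on $\Br_2$, and Albert's theorem then supplies the unitary $K/F$-involution. It also carries $\Dec(M/F)$ into $\Dec(M/K;F)$: a generator $(L/F,\sigma,b)$ with $L\subseteq M$ cyclic over $F$ and $b\in F^*$ restricts to $(LK/K,\sigma',b)$, which splits if $L=K$ and otherwise lies in $\Br(LK/K;F)\subseteq\Dec(M/K;F)$ by~\eqref{Decformula}. Conversely, every generator of $\Dec(M/K;F)$ (associated to an intermediate cyclic $L'=K(\sqrt t)$ for $t\in\{b,c,bc\}$) is the restriction of such a generator of $\Dec(M/F)$, so $\res(\Dec(M/F))=\Dec(M/K;F)$.

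Next I would observe that the composition is zero and simultaneously identify the kernel of the second map. For $[A]\in\Br_2(M/F)$, the class $[A_K]$ is 2-torsion and split by the biquadratic extension $M/K$, so Albert's biquaternion theorem gives $\Br_2(M/K)=\Dec(M/K)$ and hence $[A_K]\in\Dec(M/K)$. The same identification yields
$$\ker\bigl(\Br(M/K;F)/\Dec(M/K;F)\to\Br(M/K)/\Dec(M/K)\bigr)\ =\ \Br_2(M/K;F)/\Dec(M/K;F),$$
since $\Br(M/K;F)\cap\Dec(M/K)=\Br(M/K;F)\cap\Br_2(M/K)=\Br_2(M/K;F)$. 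Injectivity of the first map follows from: if $[A]\in\Br_2(M/F)$ has $\res[A]\in\Dec(M/K;F)=\res(\Dec(M/F))$, then $[A]-[C]\in\ker(\res)\cap\Br(M/F)=\Br(K/F)$ for some $[C]\in\Dec(M/F)$, and $\Br(K/F)\subseteq\Dec(M/F)$ since $K$ is a quadratic intermediate extension, so $[A]\in\Dec(M/F)$.

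The remaining and main task is to show that the induced map $\Br_2(M/F)/\Dec(M/F)\to\Br_2(M/K;F)/\Dec(M/K;F)$ is surjective. For this I would invoke Arason's exact sequence for $K=F(\sqrt a)$ in characteristic $\neq 2$:
$$F^*/F^{*2}\xrightarrow{\cup(a)}\Br_2(F)\xrightarrow{\res}\Br_2(K)\xrightarrow{\cors}\Br_2(F),$$
whose exactness at $\Br_2(K)$ yields $\res(\Br_2(F))=\Br_2(K;F)$ with kernel $\Br(K/F)$. Given $[B]\in\Br_2(M/K;F)$, lift it to $[A]\in\Br_2(F)$ with $\res[A]=[B]$; since $\res[A]$ is split by $M$, so is $[A]$, hence $[A]\in\Br_2(M/F)$. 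Combining this surjectivity with the preceding injectivity modulo $\Dec$ produces an isomorphism $\Br_2(M/F)/\Dec(M/F)\xrightarrow{\sim}\Br_2(M/K;F)/\Dec(M/K;F)$, identifying the image of the first map with the kernel of the second.
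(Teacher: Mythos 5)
Your proof is correct and shares the same overall skeleton as the paper's: decompose exactness into the two assertions (a) $\Br(M/K;F)\cap\Dec(M/K)=\Br_2(M/K;F)$, derived from Albert's biquaternion theorem $\Dec(M/K)=\Br_2(M/K)$ for the biquadratic $M/K$, and (b) the isomorphism $\Br_2(M/F)/\Dec(M/F)\cong\Br_2(M/K;F)/\Dec(M/K;F)$, with surjectivity coming from the Arason exact sequence exactly as in the paper. Where you genuinely diverge is in the injectivity argument for (b). The paper writes $A\otimes_FK\sim Q_2'\otimes_K Q_3'$ with each $Q_i'$ a quaternion with unitary $K/F$-involution, invokes Albert's descent theorem \cite[Prop.~2.22]{kmrt} to descend each $Q_i'$ to $F$, uses $\Br_2(K(\sqrt b\,)/F)=\Dec(K(\sqrt b\,)/F)$ to place the descents in $\Dec(M/F)$, and then appeals to $\Br(K/F)=\Dec(K/F)$. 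You instead promote the containment $\res_{F\to K}(\Dec(M/F))\subseteq\Dec(M/K;F)$ (which the paper only uses for well-definition) to an equality, observing that Lemma~\ref{unitarycp} with $k=1$ forces every element of $\Br(K(\sqrt t\,)/K;F)$ to be a symbol $(t,d/K)$ with $t,d\in F^*$, hence the trivial restriction of $(t,d/F)\in\Dec(M/F)$; the injectivity then falls out formally from $\ker(\res_{F\to K})=\Br(K/F)\subseteq\Dec(M/F)$. This is a genuinely cleaner route: it dispenses with KMRT Prop.~2.22 entirely, makes visible that the only descent needed is the trivial one for $F$-rational symbols, and localizes the real content in the single identity $\res(\Dec(M/F))=\Dec(M/K;F)$. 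If you write this up, do spell out the appeal to Lemma~\ref{unitarycp} when identifying the generators of $\Br(K(\sqrt t\,)/K;F)$ — your phrase ``is the restriction of such a generator'' presently leaves that step implicit.
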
 

\begin{proof} 
The kernel of the right map in \eqref{decexact} is 
$\big[\Br(M/K;F) \cap \Dec(M/K)\big]\big/
\Dec(M/K;F)$. So, the 
 exactness of \eqref{decexact} is equivalent to two assertions: 
\begin{equation*}
\qquad(\text{a}) \quad \Br(M/K;F) \, \cap \,  \Dec(M/K) \ = \ \Br_2(M/K;F).
\qquad\qquad\qquad\qquad\qquad\qquad\qquad\qquad\qquad\qquad\qquad
\end{equation*}
\vskip-7pt
\noindent and
\vskip-22pt
\begin{equation*}
\qquad(\text{b}) \quad \Br_2(M/F)\big/ \Dec(M/F) \ \cong \ 
\Br_2(M/K;F)\big/ \Dec(M/K;F)\qquad\qquad\qquad\qquad\qquad\qquad
\qquad\qquad\quad
\end{equation*}
The equality (a) is immediate from the fact that $\Dec(M/K) =\Br_2(M/K)$,
as $M$ is a biquadratic extension of $K$.  (This is well-known, and is 
deducible, e.g., by refining the argument in  \cite[Prop.~16.2]{kmrt}.   
It also appears in  
\cite[Cor.~2.8]{tigtri} as the assertion that property P$_2$(2) holds for $K$.)
The isomorphism (b) appears in \cite[Prop.2.2]{tigtri} as the 
isomorphism ${N_2(M/F) \cong M_2(M/K/F)}$, see the conmments on p.~14 of 
\cite{tigtri}.  Since the isomorphism (b) is somewhat buried in the general 
arguments of 
\cite{tigtri}, we give a short and direct proof of it:  
  If ${[A] \in \Dec(M/F)}$, then $A \sim Q_1\otimes_F Q_2
\otimes _F Q_3$, where $Q_1$ is the quaternion algebra  $\quat arF$,
$Q_2 = \quat bsF$, and $Q_3 = \quat ctF$, for some $r,s,t\in F^*$.  
So, ${A\otimes_FK \sim (Q_2 \otimes_F K)\otimes_K(Q_3 \otimes_FK)}$.\break
Here, $Q_2\otimes _F K$ has the unitary $K/F$-involution 
$\eta \otimes \psi$, where $\eta$ is any involution of the first 
kind
\break 
on $Q_2$ and $\psi$ is the nonidentity $F$-automorphism of $K$.
So $[Q_2\otimes_FK] \in \Br(K(\sqrt b)/K;F)\subseteq \Dec(M/K;F)$; 
likewise
%\break 
$[Q_3\otimes_F K] \in
\Br(K(\sqrt c)/K;F)\subseteq \Dec(M/K;F)$, and hence 
$[A\otimes_F K] \in \Dec(M/K;F)$.  Thus, $\res_{F\to K}$
induces a well-defined map
%\break  
${f\colon \Br_2(M/F)/\Dec(M/F) \to 
\Br_2(M/K;F)/\Dec(M/K;F)}$.  From Arason's long  exact sequence (see, e.g.,
\cite[Cor.~30.12(1)]{kmrt} or \eqref{longexact} above)  
$$
{\ldots \to H^2(F, \mu_2) \to H^2(K,\mu_2) 
\to H^2(F, \mu_2) \to \ldots \ },
$$
$f$ is surjective.  For injectivity of~$f$, take any $[A] \in 
\Br_2(M/F)$ with $\res_{F\to K}[A] \in \Dec(M/K;F)$.  We need to show
$[A]\in \Dec(M/F)$.  We have $A\otimes _FK \sim Q_2' \otimes_K  Q_3'$
where the $Q_i'$ are quaternion algebras over~$K$ with 
$Q_2' \in \Br(K(\sqrt b\, )/K;F)$ and $Q_3'\in \Br(K(\sqrt c\, )/K;F)$.  By
a result of Albert \cite[Prop.~2.22]{kmrt}, the quaternion algebra  
$Q_2'$ with $K/F$-involution has the form 
$Q_2' \cong Q_2^{\prime\prime} \otimes _F K$, where 
$Q_2^{\prime\prime}$ is a quaternion algebra over $F$.  
Then, $[Q_2^{\prime\prime}] \in \Br_2(K(\sqrt b\, )/F) =
\Dec(K(\sqrt b \,)/F)$, as noted for (a) above.
Likewise, $Q_3'\cong Q_3^{\prime\prime}\otimes_F K$, where 
$[Q_3^{\prime\prime}] \in \Dec(K(\sqrt c\, )/F)$.
Since $[A\otimes _F Q_2^{\prime\prime} \otimes_F Q_3^{\prime\prime}]
\in \Br(K/F) = \Dec(K/F)$, we have 
$$
[A] \,= \  [A\otimes _F Q_2^{\prime\prime} \otimes_F Q_3^{\prime\prime}]
\,[Q_2^{\prime\prime}]\,[Q_3^{\prime\prime}]
 \ \in \ \Dec(K/F) \cdot \Dec(K(\sqrt b\, )/F) \cdot 
\Dec(K(\sqrt c\, )/F)
 \ \subseteq\,  \Dec(M/F).
$$
Thus, $f$ is an isomorphism, proving (b).
\end{proof}

\begin{remark}\label{indecs}
The term $\Br_2(M/F)/\Dec(M/F)$  for $M/F$ triquadratic has arisen in the 
study of indecomposable algebras $A$ of degree $8$ and exponent $2$. 
Note first that for any  $A$ of degree $8$ and exponent $2$,
  by Rowen's theorem 
\cite[Th.~6.2]{rowen} there is a triquadratic field extension  $M$ of  
the center $F$ of $A$, such that $M$~is a maximal subfield of~$A$.  
If $A$ is indecomposable, then $[A]$ yields a nontrivial element of $\Br_2(M/F)/\Dec(M/F)$.
Examples of indecomposables if degree $8$ and exponent $2$ were
first given in \cite[Th.~5.1]{art}.  Subsequently, Karpenko showed 
in \cite[Cor.~5.4]{karpenko} that if $B$ is a division algebra with center $F$
of degree $8$ and exponent~$8$, and $F'$ is a field generically reducing the
exponent of $B$ to $2$, then $B\otimes_F F'$ is an indecomposable division
algebra of degree $8$ and exponent $2$.  Also, K. McKinnie  in 
her thesis (unpublished), using lattice methods, gave another example of 
indecomposables  of degree $8$ and exponent $2$. There is a kind of converse 
to this as well:  Given a division algebra $A$ with $[A] \in 
\Br_2(M/F)\setminus \Dec(M/F)$, Amitsur, Rowen, and Tignol showed in 
\cite[Th.~3.3]{art} that the associated generic abelian crossed product algebra 
$A'$ of $A$ is indecomposable of degree $8$ and exponent~$2$. 
(It is not stated this way in \cite{art}, but made explicit in 
\cite[\S~2]{tignol}.) 
This $A'$ is the ring of quotients of a semiramified graded division 
algebra $\sE$ of the type considered in previous sections:  $\sE$ is 
graded Brauer equivalent to  $\sI \otimes_\sT \sN$, where 
$\sT$ is a graded field with $\sTz \cong F$, $\sI$ is an inertial graded
division algebra over $\sT$ with $\sI_0\cong A$, and 
$\sN$ is DSR over $\sT$ with $\sN_0 \cong M$.      
\end{remark}

Using Prop.~\ref{Br2} we now construct biquaterion graded algebras where the 
map $\alpha$ of \eqref{alpha} above is not injective.

\begin{example}\label{noninjex}
Let $M$ be a triquadratic extension of a field $F$ ($\chr(F)\ne 2$) with 
 ${\Br_2(M/F) /\Dec(M/F) \ne 0}$.  (Such $F$ and $M$ exist, as noted
in Remark~\ref{indecs}.) Say $M = F(\sqrt a, \sqrt b, \sqrt c\, )$ for
$a,b,c\in F^*$.  Let $K = F(\sqrt a\,)$, and let $H = \Gal(M/K)$.  
Let $\sR = F[x,x\inv,y,y\inv]$, the Laurent polynomnial ring in indeterminates
$x$ and~$y$, with its usual grading in which  $\sR_{(k,\ell)} = Fx^ky^\ell$ for 
all $(k,\ell) \in \zz\times \zz$.  So, $\sR$ is a graded field with $R_0 = F$
and $\Gamma_\sR = \zz\times \zz$.  Let $\sT = K[x,x\inv,y,y\inv]$, a graded
field with $[\sT\col\sR] = 2$, and let $\sE = \sQ\otimes_\sT \sQ'$, where
$\sQ$ and $\sQ'$ are the following semiramified graded quaternion 
division algebras 
over $\sT$:  $\sQ = \quat b x \sT$, which is generated over $\sT$ by 
homogeneous elements $i$ and$j$ with relations $i^2= b$, $j^2 = x$, 
and $ij = -ji$, with $\deg(i) = 0$ and $\deg(j) = (\frac12,0)$. 
So, $\sQ_0 \cong K(\sqrt b\, )$ and $\Ga_\sQ = \frac12\zz\times \zz$. 
Likewise, set $\sQ' = \quat c y\sT$ with standard generators $i'$~and~$j'$,
with $\deg(i') = 0$ and $\deg(j') = (0,\frac12)$, so
$\sQ_0 \cong K(\sqrt c\, )$ and $\Ga_{\sQ'} = \zz\times \frac12\zz$.  
Since $\sQ \cong \quat bx\sR \otimes_ \sR\sT$, $\sQ$~has the 
graded $\sT/\sR$-involution $\tau_\sQ = \eta \otimes \psi$, where 
$\eta$ is the 
canonical symplectic graded involution on $\quat bx\sR$, for which  $\eta(i) = -i$
and $\eta(j) = -j$, and $\psi$ is the nonidentity graded $\sR$-automorphism
of $\sT$. Likewise $\sQ'$ has a graded $\sT/\sR$-involution $\tau_{\sQ'}$ with
$\tau_{\sQ'}(i') = -i'$ and $\tau_{\sQ'}(j') = -j'$. 
By Lemma~\ref{DSRprod}, $\sE$~is a graded division algebra which is 
DSR for $\sTR$ with $\sEz \cong \sQ_0\otimes_ \sTz \sQ_0'
\cong K(\sqrt b\, ) \otimes _K K(\sqrt c\, ) \cong M$ and 
$\Ga_\sE = \Ga_\sQ + \Ga_{\sQ'} = \frac12\zz\times \frac12 \zz$;
our     
graded $\sTR$-involution on $\sE$ is $\tau = \tau_\sQ \otimes \tau_{\sQ'}$.  
(Explicitly, $\sS = \sT[i,i'] \cong_g M[x,x\inv,y,y\inv]$ is a maximal
graded subfield of $\sE$ with $\sS$ inertial over $\sT$, and 
$\sJ = \sT[j,j']\cong_g \sT[\sqrt x, \sqrt x^{\,-1}, \sqrt y, \sqrt y^{\,-1}]$
is a  maximal graded subfield of $\sE$ which is totally ramified over 
$\sT$ with $\tau(\sJ) = \sJ$.)  We claim that the following diagram is 
commutative with all horizontal maps isomorphisms and vertical
maps described below:
\begin{equation}\label{diagram}
\begin{CD}
\Br(M/K;F)\big/\Dec(M/K;F) @>>> \ker(\wi N)/ \Pi @>>>\SK(\sE,\tau)\\
@VVV @VVV @V{\alpha}VV\\
\Br(M/K)\big/\Dec(M/K) @>>> \wh H\inv(H,M^*)@>>> \SK(\sE)
\end{CD}
\end{equation}
The left vertical map is the map in Prop.~\ref{Br2}, whose kernel
is  there shown to be isomorphic to\break 
 $\Br_2(M/F)/\Dec(M/F)$.  
Since we have assumed this kernel is nontrivial, once the claim 
is established the right vertical map $\alpha$, which is the map
of \eqref{alpha} must also have nontrivial kernel, as desired.

We now verify the claim.  In the top line of \eqref{diagram},
$\ker(\wi N) = \{ a\in M^*\mid N_{M/K}(a) \in F\}$ and 
${\Pi = \prod_{h\in H} M^{*h\ov \tau}}$, where $H = \Gal(M/K)$
and $\ov \tau = \tau|_{\sEz}$.  The middle vertical map sends 
$a\,\Pi \mapsto a/\ov\tau(a)\,I_H(M^*)$. It is well defined since
if $a\in \ker(\wi N)$, we have $N_{K/F}(a/\tau(a)) = N_{K/F}(a)/
\tau(N_{K/F}(a)) =1$, and 
if $b\in M^{*h\ov \tau}$, then 
${b/\ov \tau(b)  = h\ov \tau(b)/\ov \tau(b) \in I_H(M^*)}$.  
In the right rectangle of \eqref{diagram}, the top map sends
$a\,\Pi\mapsto a\Sigma_\tau(\sE)$, and the bottom map sends
$b\,I_H(M^*) \mapsto b\,[\sE^*,\sE^*]$, so the 
right rectangle is clearly commutative. The horizontal maps
in this rectangle are the isomorphisms given in Th.~
\ref{unitaryDSR}(i) and Prop.~\ref{NSRSK}(i).  For the left
vertical map take an arbitrary element of $\Br(M/K;F)$,
which has the form $[A]$, where $A = A(u,b_1,b_2)$ in the notation
of \S\ref{ubicyclic}, with $u, \ b_1, \ b_2$ satisfying
the relations in \eqref{urels} and \eqref{brel} and the added relations
in Lemma ~\ref{unitarycp}(iii), notably $u\, \sigma\rho \ov \tau(u)
= 1$.  The horizontal map in the left rectangle is the isomorphism
of Th.~\ref{thmainm} which sends $[A]$ mod $\Dec(M/K;F)$ to 
$q\, \Pi$ for any $q\in M^*$ with $q/\sigma\rho\ov \tau(q) = u$.  
This is mapped downward to $u \,I_H(M^*)$, since
$q/\ov \tau(q) = u\,\sigma\rho\ov \tau(q)/\ov\tau(q) \equiv 
u\ (\modd\ I_H(M^*))$.  On the other hand, $[A]$ mod $\Dec(M/K;F)$
is mapped downward to $[A]$ mod $\Dec(M/K)$, which is mapped to the 
right to $u\, I_H(M^*)$ by the isomorphism of \eqref{njnj}. Thus, 
the left rectangle of \eqref{diagram} is commutative, and its
horizontal maps are isomorphisms, completing the proof of the claim.  
\end{example}

\begin{remark}
For the preceding example with the $\alpha$ of \eqref{alpha}
noninjective, we have worked with graded division algebras.  
There are corresponding examples of division algebras over
a Henselian valued field with the corresponding $\al$ not
injective, obtainable as follows: With fields $F \subseteq K \subseteq M$
as in Ex.~\ref{noninjex}, let $F' = F((x))((y))$, $K' = K((x))((y))$, 
and $M' = M((x))((y))$, which are twice iterated Laurent power
series fields each with it standard Henselian valuation with 
value group $\zz\times \zz$ (with right-to-left lexicographic ordering)
and residue fields $\ov{F'} \cong F$, $\ov{K'}\cong K$, and $\ov{M'}
\cong M$.  Let $D = \quat bx{K'} \otimes_{K'} \quat cy{K'}$, which is 
a division algebra over $K'$, and the Henselian valuation $v_{K'}$ on 
$K'$ extends uniquely to a valuation $v_D$ on $D$, for which 
$\ov D \cong M$ and $\Ga_D = \frac12\zz\times \frac12 \zz$.
For the associated graded ring of $D$ determined by $v_D$, we have 
$\gr(D) \cong_g \sE$ and, as $D$ is tame over $K'$,
  $Z(\gr(D)) = \gr(K')\cong_g \sT$, for the $\sE$ and $\sT$
of Ex.~\ref{noninjex}.  Also, $\gr(F')\cong_g\sR$ for the $\sR$
of Ex.~\ref{noninjex}. 
This $D$ has a unitary $K'/F'$-involution $\tau_D$, since each 
constituent quaternion algebra has such an involution.  
Because the Henselian valuation $v_{F'}$ on $F'$ has a unique  
extension to $K'$, namely $v_{K'}$, and $v_D$~is the unique extension 
of $v_{K'}$ to $D$, we must have $v_D\circ \tau_D = v_D$.  
Therefore, $\tau_D$ induces a graded involution 
$\wi \tau$ on~$\sE$, which is a unitary $\sTR$-involution.
By \cite[Th.~3.5]{I} and \cite[Th.~4.8]{hazwadsworth},  
$\SK(D,\tau_D) \cong \SK(\sE, \wi \tau)$ and $\SK(D) \cong
\SK(\sE)$.  These isomorphisms are compatible with the map
$\al_{\wi\tau}\colon\SK(\sE, \wi \tau) \to \SK(\sE)$
and the corresponding map $\al_D\colon \SK(D, \tau_D) \to 
\SK(D)$.  Also, because $\wi \tau$ and the $\tau$ of 
Ex.~\ref{noninjex} are each graded $\sT/\sR$-involutions
on $\sE$, we have $\SK(\sE, \wi \tau) \cong \SK(\sE,  \tau)$,
and it is easy to check that under this isomorphism
$\alpha_{\wi\tau}$ corresponds to the $\al$ of Ex.~
\ref{noninjex}.  Since this $\al$ is not injective, 
$\al_D$ is also noninjective. 
\end{remark}

%%\vfill\eject
 
%% \bigskip

%%%%

\end{document}